\renewenvironment{proof}[1][Proof]{\noindent\textit{#1. } }{\hfill$\square$}
 \newtheoremstyle{theorem}{6pt}{6pt}{\rm}{}{\sffamily}{ }{ }{}
 \theoremstyle{theorem}
\newtheorem{theorem}{\sc Theorem}[section]
  \newtheoremstyle{thm}{6pt}{6pt}{\rm}{}{\sffamily}{ }{ }{}
 \theoremstyle{thm}
 \newtheoremstyle{lemma}{6pt}{6pt}{\rm}{}{\sffamily}{ }{ }{}
 \theoremstyle{lemma}
 \newtheorem{lemma}{\sc Lemma}[section]
 \newtheoremstyle{lem}{6pt}{6pt}{\rm}{}{\sffamily}{ }{ }{}
 \theoremstyle{lem}
\newtheoremstyle{case}{6pt}{6pt}{\rm}{}{}{. }{ }{}
 \theoremstyle{case}
 \newtheoremstyle{statement}{6pt}{6pt}{\rm}{}{\sffamily}{ }{ }{}
\theoremstyle{statement}
 \newtheoremstyle{corollary}{6pt}{6pt}{\rm}{}{\sffamily}{ }{ }{}
 \theoremstyle{corollary}
  \newtheoremstyle{defi}{6pt}{6pt}{\rm}{}{\sffamily}{ }{ }{}
 \theoremstyle{defi}
  \newtheoremstyle{cor}{6pt}{6pt}{\rm}{}{\sffamily}{ }{ }{}
 \theoremstyle{cor}
\newtheorem{definition}[theorem]{Definition}
\newtheoremstyle{example}{6pt}{6pt}{\rm}{}{\sffamily}{ }{ }{}
\theoremstyle{example}
\newtheorem{proposition}[theorem]{\sc Proposition}
\newtheoremstyle{remark}{6pt}{6pt}{\rm}{}{\sffamily}{ }{ }{}
\theoremstyle{remark}
\newtheorem{remark}{\sc Remark}[section]
\newtheoremstyle{approximation}{6pt}{6pt}{\rm}{}{\sffamily}{ }{ }{}
\theoremstyle{approximation}
\newtheoremstyle{scheme}{6pt}{6pt}{\rm}{}{\sffamily}{ }{ }{}
\theoremstyle{scheme}
\newtheoremstyle{Algorithm}{6pt}{6pt}{\rm}{}{\sffamily}{ }{ }{}
\theoremstyle{Algorithm}
 \newtheoremstyle{Remark}{6pt}{6pt}{\rm}{}{\sffamily}{ }{ }{}
 \theoremstyle{Remark}
\newtheoremstyle{Lemma}{6pt}{6pt}{\rm}{}{\sffamily}{ }{ }{}
\theoremstyle{Lemma}
\newtheoremstyle{Assumption}{6pt}{6pt}{\rm}{}{\sffamily}{ }{ }{}
\theoremstyle{Assumption}
\newtheoremstyle{Proposition}{6pt}{6pt}{\rm}{}{\sffamily}{ }{ }{}
\theoremstyle{Proposition}
\newtheoremstyle{prop}{6pt}{6pt}{\rm}{}{\sffamily}{ }{ }{}
\theoremstyle{prop}
\newtheoremstyle{rem}{6pt}{6pt}{\rm}{}{\sffamily}{ }{ }{}
 \theoremstyle{rem}
\newtheoremstyle{hypo}{6pt}{6pt}{\rm}{}{\sffamily}{ }{ }{}
 \theoremstyle{hypo}
  \newtheoremstyle{Step}{6pt}{6pt}{\rm}{}{}{ }{ }{}
 \theoremstyle{Step}
 \newtheoremstyle{lema}{6pt}{6pt}{\rm}{}{\sffamily}{ }{ }{}
 \theoremstyle{lema}
\numberwithin{equation}{section}
\renewcommand{\d}{\,\mathrm{d}}  
\renewcommand{\d}[1]{\mathrm{#1}}
\newcommand{\mc}[1]{\mathcal{#1}}
\renewcommand{\div}{\text{div}}
\newcommand{\Ga}{\Gamma}
\newcommand{\Om}{\Omega}
\newcommand{\lay}{\Omega\setminus\overline{\Omega_{\delta}}}
\renewcommand{\O}{\mathcal{O}}
\newcommand{\ve}{\varepsilon}
\newcommand{\Omext}{\Om_{\text{ext}}}
\newcommand{\vp}{\varphi}
\newcommand{\om}{\omega}
\newcommand{\ld}{\lambda}
\newcommand{\vapp}{v_{{\rm app}}}
\newcommand{\wapp}{w_{{\rm app}}}
\newcommand{\ga}{\gamma}
\newcommand{\bR}{\mathbb{R}}
\newcommand{\wt}[1]{\widetilde{#1}}
\renewcommand{\b}[1]{\overline{#1}}
\newcommand{\D}[2]{\frac{\partial #1}{\partial #2}}
\newcommand{\nnn}{\text{\boldmath{$\nu$}}}
\renewcommand{\tt}{\text{\boldmath{$\tau$}}}
\renewcommand{\ni}{\text{\boldmath{$\nu$}}}
\begin{document}

\title{Asymptotic analysis of the transmission eigenvalue problem for a Dirichlet obstacle coated by a thin layer of non-absorbing media}
\author{{\sc Fioralba Cakoni$^{1}$,  Nicolas Chaulet $^2$ and Houssem Haddar $^3$}\\[4pt]
$^1$Department of Mathematical Sciences, University of Delaware, \\[2pt]
Newark, DE 19716, USA.\\[4pt]
$^2$Department of Mathematics University College London,  \\[2pt]
Gower street, London, WC1E 6BT, UK.\\[4pt]
$^3$ INRIA Saclay Ile de France / CMAP Ecole Polytechnique, \\[2pt] Route de Saclay, 91128
Palaiseau Cedex, France.
}
\maketitle

\begin{abstract}
{We consider the transmission eigenvalue problem for an impenetrable obstacle with Dirichlet boundary condition surrounded  by a thin layer of non-absorbing 
inhomogeneous material.  We derive a rigorous asymptotic expansion for the
first transmission eigenvalue  with respect to the thickness of the thin layer. Our 
convergence analysis is based on a Max-Min principle and an iterative approach which involves estimates
on the corresponding eigenfunctions. We provide explicit expressions for 
the terms  in the asymptotic expansion up to order three. }
{Transmission eigenvalues, thin layers, asymptotic methods, inverse scattering}
\end{abstract}

\section{Introduction}
Transmission eigenvalues appear in the study of scattering by inhomogeneous media and are closely related to non-scattering frequencies \cite{CakHad2}, 
\cite{sp}. Such eigenvalues  provide information about material properties of the scattering media  \cite{cakginhad} and can be determined from scattering data 
\cite{CaCoHa10}, \cite{Leh}. Hence they can play an important role in a variety of inverse problems in target identification and non-destructive testing 
\cite{haddex}. The transmission eigenvalue problem is a non-selfadjoint and nonlinear problem that is not covered by the standard theory of eigenvalue problems 
for elliptic operators. In the past few years transmission eigenvalues have become an important area of research in inverse scattering theory. Since the first proof 
of existence of transmission eigenvalues in \cite{cakginhad} and  \cite{paisyl}, the interest in the transmission eigenvalue problem has increased, resulting in a 
number of important advancements. For an update survey on the topic we refer the reader to  \cite{CakHad2}. 

In this paper we consider the transmission eigenvalue problem corresponding to the scattering by an impenetrable obstacle with Dirichlet boundary condition 
coated by a thin layer of non-absorbing inhomogeneous material. The existence and discreteness of transmission 
eigenvalue problem  is investigated in   \cite{CaCoHa10} (see also 
\cite{LakVain13}).  In the two-dimensional case this  problem models the  scattering of TE-polarized 
electromagnetic waves (written in terms of the electric field) by an infinitely long  cylindrical prefect conductor coated by a thin layer of non-magnetic dielectric 
material. In the three dimensional case it models the scattering of acoustic waves by a sound-soft object surrounded by acoustically non-absorbing material. It is well known (see e.g. \cite{BenLem96}) that the first order approximation to the  scattering problem for a coated perfect conductor is an exterior 
boundary value problem with impedance type boundary condition where the impedance function depends inverse proportionally to the thickness of the layer, 
here denoted by $\delta$. The corresponding "non-scattering" frequencies for this approximate model become the eigenvalues of a non-coercive Robin 
eigenvalue problem, which is studied  by the authors of this paper in \cite{CCH13}. 

The main concern of this study is to develop a rigorous asymptotic expansion for transmission eigenvalues  as $\delta\to 0$. Our asymptotic analysis is based on 
an iterative and constructive approach. We restrict ourselves here to the first
transmission eigenvalue.  As expected this transmission eigenvalue is  close to
the first  Dirichlet eigenvalue up to  order 
$\delta$, result that is proven directly in this paper by using the Max-Min principle. Then, the main idea of our approach is, roughly speaking, having proven  
convergence  of order $k$ for  the asymptotic expansion of the transmission eigenvalue, we next prove  estimates of order $k$ for the corresponding eigenfunctions 
by using standard approximation results for the eigenfunctions of the negative Laplacian with Dirichlet boundary conditions. Then, we deduce convergence at order 
$k+1$ for the eigenvalues by using the Max-Min principle. Although our analysis can in principle be carried through for any 
order, for sake of simplicity we provide here explicit expressions only for the terms up to order three in the asymptotic expansion of the first transmission 
eigenvalue. The explicit construction of the asymptotic expansion is simplified
by the fact that the first  eigenvalue of the Dirichlet problem is simple. The
extension of our analysis to higher order transmission eigenvalues is
challenging, first because explicit construction of the asymptotic is
complicated and second because one looses the characterization of the
transmission eigenvalues in terms of a Max-Min principle. 

From practical point of view, the second order expansion provides in fact  a formula for the thickness of  the layer in terms of the first (measurable) transmission 
eigenvalue. Unfortunately, the refractive index of the layer does not appear in the first three terms of the asymptotic expansion. Of course, the refractive index will 
show in higher order terms but then the obtained reconstruction formula would be highly unstable with respect to noise in the transmission eigenvalue.  
A better model to capture both the 
thickness and the refractive index in the first order term in the context of electromagnetic scattering is to write the problem in terms of the magnetic field, which 
would lead to Neumann boundary condition on the boundary of the inclusion. Unfortunately the transmission eigenvalue problem for inhomogeneous media 
containing an inclusion with Neumann boundary condition  is still
open. Moreover, no Max-Min principle is available in this case which is the corner stone of our 
approach. 

The structure of the paper is as follows. In the next section we formulate the problem and recall some relevant results on the transmission eigenvalue problem for 
an inhomogeneous media containing an inclusion with Dirichlet boundary condition. In Section 3 we derive the formal asymptotic expansion for  transmission 
eigenvalues and provide explicit formulas for the terms up to order three.  Section 4 is dedicated to the rigorous convergence 
proof of the asymptotic expansion derived in the previous section  for the first transmission eigenvalue.
 For our analysis we need various technical results that to our knowledge are not available in 
the literature, in particular elliptic a priori estimates and trace lemma with explicit dependance on $\delta$.
 To keep the reader focused in the main goal of the paper, 
we prove all the auxiliary results needed for our analysis in  Appendix.  

\section{Formulation of the problem}
\label{sec:description}
We consider an impenetrable object  coated with a thin layer of non-absorbing penetrable material  with refractive index $n$ which occupies the region $\Om
\subset {\mathbb R}^d$, $d=2,3$ where $\Om$ is bounded and simply connected  with smooth enough (to become precise later) boundary $\Ga$. We denote by 
$$\Om_\delta = \{ x \in \Om \text{ such that } d(x,\Ga) >\delta \}$$ and by  $$\Ga_\delta = \{x \in \Om \text{ such that } d(x,\Ga) 
= \delta\}$$
its boundary.  The simply connected domain $\Om_\delta$ (see Figure \ref{fig:mince})  represents here  the impenetrable object and $\lay$ represents the thin layer.  
\begin{figure}[hh]
\begin{center}
\begin{tikzpicture}
 \filldraw [fill=gray!40,even odd rule,smooth cycle,tension=.8] plot coordinates {(-2,-1) (1.5,-1.5) (1.5,0)(-2.,2)};
   \path [draw,fill=white,smooth cycle,tension=.8,] plot coordinates {(-1.8,-0.8) (1.3,-1.3) (1.3,-0.2)(-1.85,1.8)};
 \draw[thick,->] (0.3,1) -- (-0.05,0.5); 
 \node at (-0.35,0.5) {$\nnn$};
  \draw[thick,<->] (1.3,-0.2) -- (1.5,0); 
   \node at (1.75,0.25) {$\delta$};
 \draw[thick] (-2.25,-0.25) -- (-2.75,-0.5); 
 \node at (-3.8,-0.75) {$\lay$};
  \node at (-2.85,1) {$\Ga$}; 
    \node at (-1.9,0.6) {$\Ga_\delta$};
    \node at (-0.5,0.) {$\Om_\delta$};
        \node at (1.5,1) {$\Omext$};
\end{tikzpicture}
\end{center}
\caption{The scattering layered object}
 \label{fig:mince}
\end{figure}
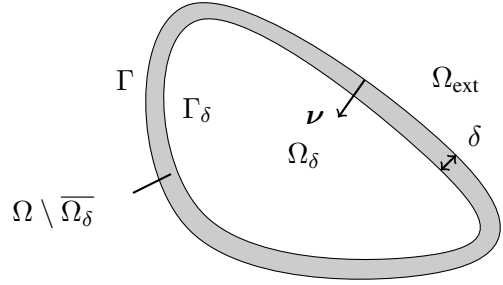
The scattering of an incident wave $u^i$, which here for simplicity is assumed to be an entire solution of the Helmholtz equation (one could also consider the 
incident field to  be a point source located outside $\Omega$), by such a  structure gives rise to a scattered field $u^s=u-u^i$, with  $u$ being the total field, that 
satisfies
\begin{equation}
\label{eq:coucheminceITEP}
\begin{cases}
\Delta u+ k^2 n u=0 \text{ in }  \lay, \\
\Delta u  + k^2 u=0 \text{ in } \Omext := \bR^d\setminus \b{\Om}, \\
\displaystyle  \left[ \D{ u}{\nnn} \right] = 0 \,, \quad [u ]=0 \text{ on }  \Ga,\\
u =0 \text{ on } \Ga_\delta,\\
\displaystyle \lim \limits_{R \to \infty} \int_{|x|=R}\left|\partial_{r} u^s - iku^s \right|^2ds =0.
\end{cases}
\end{equation}
where $k$ is the wave number, $n\in L^\infty(\lay)$ is the index of refraction of the layer such that $n\geq n_0>0$, $\nnn$ is unitary normal to $\Ga$ directed 
inward to $\Om$ and  $[v]=v^+-v^-$ denotes the jump of $v$ across $\Ga$ where $v^+$ is the exterior trace of $v$ and $v^-$ is the interior trace of $v$ on $\Ga$. 
The corresponding transmission eigenvalue problem  is to find the  values of
$k^2_\delta$ such that there exists a non trivial solution
$(w_\delta,v_\delta) \in L^2(\lay)\times L^2(\Om)$  to the 
following homogeneous coupled problem
\begin{equation}
\label{eq:ITEP}
\begin{cases}
\Delta w_\delta+ k_\delta^2 n w_\delta=0 \text{ in }  \lay, \\
\Delta v_\delta  + k_\delta^2 v_\delta=0 \text{ in } \Om, \\
\displaystyle  \D{v_\delta}{\nnn} = \D{w_\delta}{\nnn} \,, \quad v_\delta=w_\delta \text{ on }  \Ga,\\
w_\delta =0 \text{ on } \Ga_\delta.
\end{cases}
\end{equation}

\begin{definition}
\label{def:ITE}
The values $k^2_\delta>0$ for which \eqref{eq:ITEP} has a non trivial solution $(w_\delta,v_\delta)\in L^2(\lay)\times L^2(\Om)$ are called transmission eigenvalues, and the nonzero 
solutions  $w_\delta$ and $v_\delta$ the associated eigenfunctions.
\end{definition}
It is shown in  \cite{CaCoHa10}  that the real transmission eigenvalues (the wavenumber $k$ is related to the interrogating frequency) can be determine 
from measured far field (or near field) scattering data. Note that the transmission eigenvalue problem is non-selfadjoint and complex eigenvalues may occur but, 
from practical point of view as discussed in Introduction and  the fact that only real transmission eigenvalues are proven to exist, here we are interested only 
on real transmission eigenvalues (see e.g. \cite{CakHad2}). Our main goal in this paper is to derive rigorous  asymptotic expansions for transmission eigenvalues 
in terms of the thickness of the layer $\delta$  as $\delta\to 0$.
 
The transmission eigenvalue problem for an inhomogeneity containing an  impenetrable inclusion with Dirichlet boundary condition is investigated in 
\cite{CaCoHa12}  and \cite{LakVain13}  (our problem \eqref{eq:ITEP} is exactly of that form)  where the discreteness and existence of real transmission eigenvalues is shown  under 
appropriate assumptions on the refractive index $n$. For the sake of reader's convenience and later use  we summarize the main results from \cite{CaCoHa12}.

The first step in the analysis of \eqref{eq:ITEP} consists in reformulating   it as  an eigenvalue problem for a forth order equation. To this end, introducing
\begin{equation}
u_\delta = 
\begin{cases}
w_\delta - v_\delta \text{ in } \lay\\
-v_\delta \text{ in } \Om_\delta
\end{cases}
\label{eq:defu}
\end{equation}
we obtain that this $u_\delta$ satisfies
\begin{equation}
\label{eq:bilpludelta}
(\Delta + k_\delta^2)\frac{1}{1-n}(\Delta+ k_\delta^2n)u_\delta =0 \text{ in } \lay.
\end{equation}
Equation (\ref{eq:bilpludelta}) together with the fact that   $u_\delta$ must be in $H^{1}_{0}(\Om)$  and satisfy the Helmholtz equation in $\Om_
\delta$ suggest that  to arrive at a variational formulation equivalent to the eigenvalue problem \eqref{eq:ITEP} we need to introduce the space 
\[
 W_{\delta}:=\left\{  u\in H^{1}_{0}(\Om)\cap H^{1}_{\Delta}(\lay) \;\text{such that}\; \D{u}{\nnn}=0 \;\text{on}\;\Ga \right\}
\]
equipped with  the norm
\[
\|u\|^{2}_{W_{\delta}} := \|u\|^{2}_{H^{1}(\Om)} + \|\Delta u\|^{2}_{L^{2}(\lay)}.
\]
Then it is shown in  \cite{CaCoHa12}  that  $k^2_\delta>0$ is a  transmission eigenvalue (according to Definition \ref{def:ITE}) with associated  eigenfunctions $
(w_\delta,v_\delta)$ if and only if  $u_{\delta}$ defined by \eqref{eq:defu} solves 
\begin{equation}\label{misp}
A_{k_{\delta}} u_{\delta} - k_\delta^4 Bu_{\delta}=0
\end{equation}
  where  the bounded linear self-adjoint operators $A_{k} \,:\,W_{\delta} \rightarrow W_{\delta}$ and $B  \,:\,W_{\delta} \rightarrow W_{\delta}$ are given by
\[
(A_{k} u,v)_{W_{\delta}} := \int_{\lay}\frac{1}{1-n} (\Delta u+k^2 u)(\Delta \b{v} +k^2\b{v}) \,dx + k^{4}\int_{\Om} u\b{v} \, dx + k^2 \int_{\Om} \nabla u\cdot\nabla\b{v}
\,dx,
\]
\[
(B u,v)_{W_{\delta}} := 2\int_{\Om} u\b{v}\,dx.
\]
In the following we denote  $n_*=\inf_{\lay}n(x)$ and $n^*=\sup_{\lay}n(x)$. The operators $A_{k}$ and $B$ satisfy the following properties.
\begin{proposition}
\label{prop:Acoercive}
Assume that $0<n_*<n(x)<n^*<1$. Then  $B$ is a compact operator and there exists a constant $C>0$ such that for all $\delta>0$
\[
|(A_{k} u,u)_{W_{\delta}}| \geq C \|u\|^2_{W_{\delta}}.
\]
\end{proposition}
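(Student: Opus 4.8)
\emph{Proof idea.} The plan is to establish the two assertions separately by elementary means. Neither presents a serious obstacle; the only point requiring genuine care is that the coercivity constant be kept independent of $\delta$, i.e.\ that no Poincar\'e- or trace-type inequality on the shrinking shell $\lay$ is used anywhere.

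For the compactness of $B$, I would use the continuous embedding $W_{\delta}\hookrightarrow H^1_0(\Om)$, which holds with norm at most $1$ since $\|u\|_{H^1(\Om)}\le\|u\|_{W_{\delta}}$, together with the compact embedding $H^1_0(\Om)\hookrightarrow L^2(\Om)$ (Rellich, $\Om$ bounded). From the definition of $B$ one has, for every $u\in W_{\delta}$,
\[
\|Bu\|_{W_{\delta}}=\sup_{\|v\|_{W_{\delta}}\le 1}\Big|2\int_\Om u\,\b v\,dx\Big|\le 2\,\|u\|_{L^2(\Om)} .
\]
Hence if $(u_j)$ is bounded in $W_{\delta}$, a subsequence converges weakly in $W_{\delta}$ and therefore strongly in $L^2(\Om)$, and the displayed bound shows that $(Bu_j)$ is Cauchy in $W_{\delta}$; that is the required compactness.

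For the coercivity, I would first observe that the hypothesis $0<n_*<n<n^*<1$ makes $1/(1-n)$ a bounded measurable coefficient on $\lay$ with $1/(1-n)\ge 1/(1-n_*)>0$, so all three terms in $(A_k u,u)_{W_{\delta}}$ are nonnegative and $|(A_k u,u)_{W_{\delta}}|=(A_k u,u)_{W_{\delta}}=:\alpha$. Reading the definition of $\alpha$ directly gives
\[
\|\Delta u+k^2u\|_{L^2(\lay)}^2\le(1-n_*)\,\alpha,\qquad \|u\|_{L^2(\Om)}^2\le k^{-4}\alpha,\qquad \|\nabla u\|_{L^2(\Om)}^2\le k^{-2}\alpha .
\]
Then the triangle inequality $\|\Delta u\|_{L^2(\lay)}\le\|\Delta u+k^2u\|_{L^2(\lay)}+k^2\|u\|_{L^2(\lay)}$, together with the trivial bound $\|u\|_{L^2(\lay)}\le\|u\|_{L^2(\Om)}$, gives $\|\Delta u\|_{L^2(\lay)}^2\le 2(1-n_*)\alpha+2\alpha$; adding the three contributions yields $\|u\|_{W_{\delta}}^2\le C^{-1}\alpha$ with $C=\big(k^{-4}+k^{-2}+2(1-n_*)+2\big)^{-1}$, which is the claim.

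The main thing to watch, and the reason the estimate is uniform in $\delta$, is that every inequality above uses only the pointwise bounds on $n$ and the inclusion $\lay\subset\Om$; no constant depending on the geometry of the thin layer ever enters, so $C$ depends only on $k$ and $n_*$. (If $k$ is confined to a compact subset of $(0,\infty)$, as in the asymptotic analysis of later sections, $C$ may moreover be taken uniform in $k$.) I therefore do not expect a genuine obstruction here; the only real care is bookkeeping to keep $\delta$ out of the constants.
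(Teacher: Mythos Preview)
Your argument is correct. Both parts are handled by exactly the elementary estimates one expects: compactness of $B$ follows from the Rellich embedding $H^1_0(\Om)\hookrightarrow L^2(\Om)$ via the bound $\|Bu\|_{W_\delta}\le 2\|u\|_{L^2(\Om)}$, and coercivity follows by reading off the three nonnegative summands in $(A_k u,u)_{W_\delta}$ and recombining them to control $\|u\|^2_{W_\delta}$. Your explicit constant $C=(k^{-4}+k^{-2}+2(1-n_*)+2)^{-1}$ is right and manifestly depends only on $k$ and $n_*$.

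The paper itself does not give a proof here; it simply cites \cite[Theorem~2.1]{CaCoHa12} and remarks that the $\delta$-independence of $C$ is clear from that argument. Your write-up therefore supplies explicitly what the paper leaves to a reference, and your emphasis on avoiding any inequality that sees the thin layer (only the inclusion $\lay\subset\Om$ and the pointwise bound on $n$ are used) is precisely the observation the paper alludes to when it says the independence of $\delta$ is ``clear''.
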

\begin{proof}
The proof can be found in \cite[Theorem 2.1]{CaCoHa12}. The fact that the coercivity constant $C$ is independent of $\delta$ is clear in this proof.
\end{proof}

We remark that if $k_\delta$ and $u_\delta\neq 0$ satisfy (\ref{misp}),  then $(w_\delta,v_\delta)$ are  obtained from $u_\delta$ by
\begin{align}
w_\delta &= \frac{1}{k^2_\delta(1-n)}(\Delta u_\delta + k^2_\delta u_\delta ) \text{ in } \lay,\label{eq:udeltatowdelta}\\
 v_\delta &= 
 \begin{cases}
-u_\delta \text{ in } \Om_\delta,\\
\displaystyle \frac{1}{k_\delta^2(1-n)}(\Delta u_\delta + k_\delta^2n u_\delta )\text{ in } \lay.
 \end{cases} \label{eq:udeltatovdelta}
\end{align}
The following result  proven in \cite{CaCoHa12} is the starting point of our discussion. 
\begin{theorem}
\label{th:existence}
Assume that $0<n_*<n(x)<n^*<1$. There exist an infinite discrete set of transmission eigenvalues and $+\infty$ is the only accumulation point.
\end{theorem}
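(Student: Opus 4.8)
\emph{Proof proposal.} I would run the whole argument through the operator reformulation \eqref{misp}: $k^2_\delta>0$ is a transmission eigenvalue exactly when $A_{k_\delta}u_\delta-k^4_\delta Bu_\delta=0$ for some $u_\delta\in W_\delta\setminus\{0\}$, using that by Proposition~\ref{prop:Acoercive} $A_k$ is self-adjoint and coercive — hence positive definite for real $k$, since then the quadratic form has the manifestly non-negative shape $(A_ku,u)_{W_\delta}=\int_\lay\frac{1}{1-n}|\Delta u+k^2u|^2\,dx+k^4\|u\|_{L^2(\Om)}^2+k^2\|\nabla u\|_{L^2(\Om)}^2$ (here $n<1$ is essential) — while $B$ is self-adjoint, compact and non-negative, $(Bu,u)_{W_\delta}=2\|u\|_{L^2(\Om)}^2$. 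Discreteness would come from analytic Fredholm theory; the existence of infinitely many transmission eigenvalues from a Max--Min/intermediate-value argument in which only a ball compactly contained in the Dirichlet obstacle $\Om_\delta$ intervenes.

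\emph{Discreteness.} Since the coefficients of $A_k$ are polynomials in $k^2$, the map $k\mapsto A_k-k^4B$ is entire; for real $k\ne0$, $A_k$ is bijective by coercivity, so on a connected complex neighbourhood $D$ of $(0,\infty)$ on which $A_k$ stays invertible we may factor $A_k-k^4B=A_k(\mathbb I-k^4A_k^{-1}B)$ with $A_k^{-1}B$ compact, making $k\mapsto A_k-k^4B$ a holomorphic family of Fredholm operators of index zero on $D$. It is invertible for at least one value of $k$: for real $k$ near $0$ the coercivity constant of $A_k$ degenerates only like $k^2$ (on functions harmonic in $\lay$ the form collapses to $k^2\|\nabla u\|_{L^2(\Om)}^2$), so $\|k^4A_k^{-1}B\|\lesssim k^2\to0$ and $\mathbb I-k^4A_k^{-1}B$ is invertible for small $k\ne0$. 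The analytic Fredholm theorem then shows that $A_k-k^4B$ is invertible on $D$ except on a subset with no accumulation point in $D$, which together with the invertibility near $k=0$ means that the transmission eigenvalues form a discrete set whose only possible accumulation point is $+\infty$.

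\emph{Infinitely many eigenvalues.} For fixed $t:=k^2>0$ put $\mu_j(t):=\max_{\dim V=j}\min_{0\ne u\in V}\frac{t^2(Bu,u)_{W_\delta}}{(A_ku,u)_{W_\delta}}$, the $j$-th eigenvalue of the compact operator $A_k^{-1}(t^2B)$; then $\mu_1(t)\ge\mu_2(t)\ge\cdots\to0$, the $\mu_j$ are continuous in $t>0$, and $k^2=t$ is a transmission eigenvalue precisely when $\mu_j(t)=1$ for some $j$ (the pencil equation $t^2Bu=A_ku$ then has a nonzero solution). By Poincar\'e $(A_ku,u)_{W_\delta}\ge k^2\|\nabla u\|_{L^2(\Om)}^2\ge c_P\,t\,\|u\|_{L^2(\Om)}^2$, so $\mu_j(t)\le\mu_1(t)\le 2t/c_P<1$ once $t<c_P/2$. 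On the other hand, fix a ball $\mathcal B\subset\subset\Om_\delta$ and let $V_m\subset W_\delta$ be the span of the first $m$ Dirichlet eigenfunctions of $-\Delta$ on $\mathcal B$, extended by zero; these lie in $W_\delta$ and vanish on $\lay$, so the layer integral in $A_k$ disappears and for $u\in V_m$
\[
\frac{t^2(Bu,u)_{W_\delta}}{(A_ku,u)_{W_\delta}}
 =\frac{2t^2\|u\|_{L^2(\mathcal B)}^2}{t^2\|u\|_{L^2(\mathcal B)}^2+t\|\nabla u\|_{L^2(\mathcal B)}^2}
 =\frac{2t}{\,t+\|\nabla u\|_{L^2(\mathcal B)}^2/\|u\|_{L^2(\mathcal B)}^2\,}
 \ \ge\ \frac{2t}{t+\lambda_m(\mathcal B)},
\]
with $\lambda_m(\mathcal B)$ the $m$-th Dirichlet eigenvalue of $\mathcal B$; hence $\mu_m(t)\ge 2t/(t+\lambda_m(\mathcal B))$ for all $t>0$, which equals $1$ at $t=\lambda_m(\mathcal B)$ and exceeds $1$ beyond it. Thus $\mu_m$ passes from below $1$ (near $t=0$) to $\ge1$ (at $t=\lambda_m(\mathcal B)$), so it attains the value $1$ at some $t\in(0,\lambda_m(\mathcal B)]$ — a transmission eigenvalue. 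Since the branches $\mu_j$ are ordered and $\mu_m(t)>1$ for all $t>\lambda_m(\mathcal B)$ with $\lambda_m(\mathcal B)\to\infty$, the standard Max--Min bookkeeping (continuity and monotonicity of the branches, together with the already established discreteness, which forbids infinitely many transmission eigenvalues in a bounded set) yields infinitely many transmission eigenvalues with $+\infty$ as the only accumulation point.

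\emph{Main obstacle.} I expect two points to require real care. The first is the degeneracy of $A_k$ as $k\to0$: $A_0$ has an infinite-dimensional kernel (all functions harmonic in $\lay$), so one cannot simply evaluate the Fredholm family at $k=0$ and must instead verify invertibility of $A_k-k^4B$ for small $k\ne0$ through the explicit estimate $\|k^4A_k^{-1}B\|\lesssim k^2$, and likewise rule out accumulation of transmission eigenvalues at $0$. The second is upgrading ``each branch $\mu_j$ reaches the level $1$'' to ``infinitely many transmission eigenvalues with the correct multiplicity'': this needs the precise continuity/monotonicity analysis of the Max--Min branches $\mu_j(t)$ — the abstract existence lemma underpinning transmission eigenvalue theory — because several branches may hit $1$ at the same $t$. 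The remaining ingredients — coercivity and compactness (Proposition~\ref{prop:Acoercive}), Poincar\'e's inequality, and the comparison with a ball inside $\Om_\delta$ — are routine.
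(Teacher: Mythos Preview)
The paper does not prove Theorem~\ref{th:existence} itself: it simply cites \cite{CaCoHa12} for the result and moves on. So there is no in-paper proof to compare against.

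That said, your outline is essentially the method of \cite{CaCoHa12} (which in turn is the Cakoni--Gintides--Haddar scheme of \cite{cakginhad}): reformulate as $A_k u=k^4 Bu$ with $A_k$ self-adjoint coercive and $B$ self-adjoint compact non-negative (Proposition~\ref{prop:Acoercive}); obtain discreteness from analytic Fredholm theory applied to $\mathbb{I}-k^4A_k^{-1}B$; and obtain infinitely many eigenvalues by testing the Max--Min quotients against Dirichlet eigenfunctions of a ball $\mathcal B\subset\subset\Om_\delta$ extended by zero, combined with the Poincar\'e lower bound near $k=0$. Your computation on $V_m$ is correct because such test functions vanish identically on $\lay$, so the layer term in $A_k$ drops out.

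The two ``obstacles'' you flag are exactly the delicate points in the literature. For the first, you do not actually need to control $\|A_k^{-1}\|$ as $k\to0$: it is enough to note that for $0<k^2<\ld_0/2$ the quadratic form $(A_ku,u)-k^4(Bu,u)\ge k^2\|\nabla u\|_{L^2(\Om)}^2-2k^4\|u\|_{L^2(\Om)}^2\ge k^2(\ld_0-2k^2)\|u\|_{L^2(\Om)}^2>0$, so $A_k-k^4B$ is injective (hence bijective, being Fredholm of index zero) and no accumulation at $0$ can occur. For the second, the clean way to count is to fix $\tau^2=\ld_m(\mathcal B)$, observe that $A_\tau-\tau^4B$ is non-positive on the $m$-dimensional space $V_m$, and invoke the abstract existence lemma (each of the first $m$ Courant branches $\mu_j$ must cross $1$ on $(0,\ld_m(\mathcal B)]$, and coincident crossings contribute the correct multiplicity to $\ker(A_k-k^4B)$); this is precisely Theorem~2.1 of \cite{cakginhad}, used verbatim in \cite{CaCoHa12}.
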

At this point we choose to normalize the $w_\delta$ and $v_\delta$ so that 
\[
\|u_\delta\|_{L^2(\Om)}=1.
\] 
The following regularity result for the eigenfunctions $(w_\delta,v_\delta)$ holds true.
 \begin{lemma}
\label{LeEllReg}
Assume that  $\Ga$ is a $C^{k+2}$-boundary and  $n\in C^{k+2}(\overline{\Omega}\setminus \Omega_\delta)$  with $k\geq 2$. Then $w_{\delta} \in H^{k}(\lay)$ 
and $v_{\delta} \in H^{k}(\Om)$.
\end{lemma}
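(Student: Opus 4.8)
The plan is to prove the lemma by bootstrapping elliptic regularity estimates for the reformulated problem; since $\delta$ is fixed throughout, no control of the constants in $\delta$ is needed here. Recall that, with $u_\delta$ defined by \eqref{eq:defu}, the eigenfunctions are recovered from $u_\delta$ through \eqref{eq:udeltatowdelta}--\eqref{eq:udeltatovdelta}, while $u_\delta$ itself solves the fourth order equation \eqref{eq:bilpludelta} in $\lay$, the Helmholtz equation $\Delta u_\delta+k_\delta^2 u_\delta=0$ in $\Om_\delta$, the two homogeneous conditions $u_\delta=0$ and $\D{u_\delta}{\nnn}=0$ on $\Ga$ (because $u_\delta\in W_\delta\cap H^1_0(\Om)$), together with the transmission conditions on $\Ga_\delta$ coming from the weak formulation \eqref{misp} as in \cite{CaCoHa12}. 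Two elementary facts will also be used: by \eqref{eq:ITEP}, $v_\delta$ solves $\Delta v_\delta+k_\delta^2 v_\delta=0$ in the \emph{whole} of $\Om$, hence is $C^\infty$ in the interior of $\Om$; and $w_\delta$ solves $\Delta w_\delta+k_\delta^2 n\, w_\delta=0$ in $\lay$ with $w_\delta=0$ on $\Ga_\delta$.

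The core of the argument is the regularity of $u_\delta$ up to the outer boundary $\Ga$, where the coupling of the problem prevents one from treating $w_\delta$ or $v_\delta$ in isolation. On a collar of $\Ga$ one already has $u_\delta\in H^2$, from $u_\delta\in H^1(\Om)$, $\Delta u_\delta\in L^2(\lay)$, $u_\delta=0$ on $\Ga$ and local elliptic regularity (together with interior regularity away from $\Ga_\delta$). Now $u_\delta$ solves on $\lay$ the fourth order elliptic equation \eqref{eq:bilpludelta}, whose coefficients --- read off by expanding $(\Delta+k_\delta^2)\tfrac{1}{1-n}(\Delta+k_\delta^2 n)$ --- belong to $C^{k}(\overline{\Om}\setminus\Om_\delta)$ because $\tfrac{1}{1-n},\,n\in C^{k+2}(\overline{\Om}\setminus\Om_\delta)$ and $1-n\ge 1-n^*>0$; in particular $k\ge 2$ ensures the coefficients are at least $C^2$, so that regularity beyond the energy space $H^2$ is available. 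Multiplying $u_\delta$ by a cut-off supported in this collar and applying the $H^s$ a priori estimates for the fourth order problem under the two homogeneous Dirichlet type conditions on $\Ga$, one bootstraps $H^2\to H^3\to\cdots$; the procedure is limited only by the data and terminates at $u_\delta\in H^{k+2}$ on a one-sided neighbourhood of $\Ga$, the ceiling $k+2$ being imposed by $\Ga\in C^{k+2}$. Since multiplication by the $C^{k+2}$ functions $\tfrac{1}{k_\delta^2(1-n)}$ and $\tfrac{n}{1-n}$ preserves $H^{k}$, formulas \eqref{eq:udeltatowdelta}--\eqref{eq:udeltatovdelta} then give $w_\delta,\,v_\delta\in H^{k}$ in a neighbourhood of $\Ga$.

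It remains to gather the easier estimates away from $\Ga$. As $v_\delta$ is $C^\infty$ in the interior of $\Om$, combining this with $v_\delta\in H^{k}$ near $\Ga$ yields $v_\delta\in H^{k}(\Om)$ (nothing has to be matched across $\Ga_\delta$, since $v_\delta$ is a single function solving one Helmholtz equation on all of $\Om$). For $w_\delta$, interior elliptic regularity for $\Delta w_\delta=-k_\delta^2 n\, w_\delta$ gives $w_\delta\in H^{k}_{\mathrm{loc}}(\lay)$, and near $\Ga_\delta$ --- which, as a level set of $d(\cdot,\Ga)$, is of class $C^{k+1}$ for $\delta$ small --- the smoothness of $v_\delta$, fed through $\Delta u_\delta=-k_\delta^2 n\, u_\delta+k_\delta^2(1-n)v_\delta$ with the Dirichlet datum $u_\delta=-v_\delta$ on $\Ga_\delta$, bootstraps $u_\delta$ to $H^{k+1}$ there, whence $w_\delta=u_\delta+v_\delta\in H^{k}$ on a one-sided neighbourhood of $\Ga_\delta$; patched with the near-$\Ga$ estimate, $w_\delta\in H^{k}(\lay)$. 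The step I expect to require the most care is the one at $\Ga$: writing the fourth order problem for $u_\delta$ in a form to which Agmon--Douglis--Nirenberg type regularity applies, checking that $\{u_\delta=0,\ \D{u_\delta}{\nnn}=0\}$ is complementing for the operator, and tracking how the finite smoothness ($C^{k+2}$ of $\Ga$ and of $n$, $C^{k+1}$ of $\Ga_\delta$) pins down the attainable Sobolev exponent; the interior estimates and the algebraic passage from $u_\delta$ back to $w_\delta$ and $v_\delta$ are routine.
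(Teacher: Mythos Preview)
Your proposal is correct and follows essentially the same route as the paper: pass to $u_\delta$, exploit the homogeneous Cauchy data $u_\delta=\partial_{\nnn}u_\delta=0$ on $\Ga$, apply Agmon--Douglis--Nirenberg type regularity for the fourth order operator~\eqref{eq:bilpludelta}, and then recover $w_\delta,v_\delta$ from \eqref{eq:udeltatowdelta}--\eqref{eq:udeltatovdelta}. The only organisational difference is on the $\Ga_\delta$ side: the paper first uses interior regularity of $v_\delta$ and the Dirichlet condition $w_\delta|_{\Ga_\delta}=0$ to read off $\Delta u_\delta$ and $\partial_{\nnn_\delta}\Delta u_\delta$ on $\Ga_\delta$, and then applies the bilaplacian regularity \emph{globally} on $\lay$ with full boundary data on both components, whereas you localise with a cut-off near $\Ga$ (so that only the $\Ga$-conditions are needed for the fourth order step) and handle a neighbourhood of $\Ga_\delta$ separately by a second order bootstrap for $u_\delta$; both variants yield $u_\delta\in H^{k+2}$ up to $\Ga$ and hence the stated conclusion.
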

\begin{proof}
First since $\Delta v_{\delta} =-k^2_\delta v_{\delta}$, using interior elliptic regularity for the Laplacian, we have  that $v_{\delta} \in C^{\infty}(\om)$ for all open set 
$\om \subset\subset\Om$. Hence its trace and its normal derivative trace  on $\Ga_{\delta}$ are in $H^{k+2-1/2}(\Ga_{\delta})$ and $H^{k+2-3/2}(\Ga_{\delta})$ 
respectively. Using the same argument but this time for the Laplace operator with homogeneous Dirichlet boundary condition on $\Ga_\delta$ we can conclude 
that the trace of the normal derivative of $w_{\delta}$ on $\Ga_{\delta}$ is also in $H^{k+2-3/2}(\Ga_{\delta})$. Hence we can easily obtain  that on $\Ga_{\delta}$ 
we have
\[
\begin{cases}
\Delta u_{\delta} \in H^{k+2-1/2}(\Ga_{\delta}),\\
\displaystyle{\frac{\partial}{\partial{{\nnn_{\delta}}}}}\Delta u_{\delta}\in H^{k+2-3/2}(\Ga_{\delta})
\end{cases}
\] 
 where $\nnn_\delta$ is the unit normal to $\Ga_\delta$ directed inward $\Om_\delta$. Since $u_{\delta} $ satisfies \eqref{eq:bilpludelta} in $\lay$ with 
homogeneous boundary conditions on $\Ga$, regularity results for the bilaplacian  implies that $u_{\delta}$ is in $H^{k+2}(\lay)$ (see e.g. \cite{Ag}). We finally 
obtain the result by using \eqref{eq:udeltatowdelta} and \eqref{eq:udeltatovdelta}.
 \end{proof}

From now on we assume that the refractive index  satisfies 
\[
0<n_*<n(x)<n^*<1.
\]
This assumption ensure existence of the interior transmission eigenvalues (Theorem \ref{th:existence}) but is more restrictive than the one proposed in 
\cite{LakVain13}  that allows $n$ to be greater than $1$ provided the thickness of the layer is sufficiently small. Nevertheless, when $n>1$ the operator $A_{k_
\delta}$ is sign indefinite and we loose the Max-Min principle which is the main ingredient of our approach.

\section{Formal Asymptotic Expansion}
\subsection{Preliminary material}\label{prelim}
For the sake of simplicity, here we perform the asymptotic expansion in the two dimensional case. The extension to three dimensional case is purely   a  technical 
issue and it is possible to obtain similar asymptotic expansions by using the same approach. Having limited ourselves to the two dimensional case and assuming 
that the boundary is $C^{k+2}$-smooth for $k\geq 2$,  we can parametrize  $\Gamma$ as
\[
\Gamma = \{ x_{\Ga}(s) \;,\; s \in [0;s_{0}] \}
\]  
where the periodic function $x_{\Gamma} \,:\, [0;s_{0}] \rightarrow \bR^{2}$ is in $C^{k+2}([0;s_{0}])$ for some  $s_{0}>0$.
Moreover, we can choose this parameterization such that  the tangent vector $\tt(s) := \frac{d x_{\Ga}}{ds}(s)$  to the surface $\Ga$ at the  arbitrary point $x_{\Ga}
(s)$ is a unit vector. Then denoting by $\ni(s)$ the inward unit  normal vector to $\Gamma$  at the  point $x_{\Ga}(s)$ and we can define the curvature $\kappa(s)$ 
by
\[
\frac{d \tt}{ds}(s) = -\kappa(s) \ni(s).
\]
Based on this parameterization of the curve $\Ga$, we obtain the following parameterization of the surface $\Ga_\delta$ 
\begin{equation}
\label{EqThinLayer}
\Gamma_{\delta} =\{x_{\Ga}(s)+\delta(s)\ni(s) \;,\; s \in [0;s_{0}]\}
\end{equation}
where $\delta \in C^\infty([0;s_{0}])$  is a periodic  function  of sufficiently small values.
Let us define  by
$$\eta_{0}:= \inf_{s\in[0,s_{0}]}\frac{1}{|\kappa(s)|},$$ and $\Om_{0}:=\{x \in \bR^{2}\, ,\; \text{dist}(x,\Ga)\leq \eta_{0}\}$. Then the map 
\begin{align*}
\vp \,:\,  [0,s_{0}]\times[-\eta_{0},\eta_{0}]&\longrightarrow \Om_{0}\\
(s,\eta) &\longmapsto  x_{\Ga}(s) +\eta \ni(s)
\end{align*}
is a $C^{k+2}$-diffeomorphism,  in other words, for every point $x \in \Om_{0}$ there exists a unique $(s,\eta) \in [0,s_{0}]\times[-\eta_{0},\eta_{0}]$ such that
\[
x = x_{\Ga}(s) +\eta \ni(s).
\] 
Next, for any function $u$ defined on $\Om_{0}$ we  can define $\wt{u}$ in  $[0,s_{0}]\times[-\eta_{0},\eta_{0}]$ by 
\begin{equation}\label{tilde}
\wt{u}(s,\eta):=u\circ \vp(s,\eta)
\end{equation}
and the  gradient of $u$ in the local coordinates $(s,\eta)$ writes as
\[
\nabla u = \cfrac{1}{(1+\eta \kappa)}\cfrac{\partial}{\partial s} \wt{u}\tt + \cfrac{\partial}{\partial \eta} \wt{u}\ni.
\]
Furthermore, using integration by parts  we have that the divergence of  a vector field $\vec{u} = u_{\tau} \tt + u_{n} \ni$  writes as
\[
\div \vec{u} = \cfrac{1}{(1+\eta \kappa)}\cfrac{\partial}{\partial s} \wt{u}_{\tau} + \cfrac{1}{(1+\eta \kappa)}\cfrac{\partial}{\partial\eta}(1+\eta \kappa)\wt{u}_{n}.
\]
We finally denote by $J_{s,\eta} := |\det(\nabla \vp (s,\eta))| = 1+\eta \kappa(s)$ the Jacobian of the change of variables.

\subsection{Formal derivation of the asymptotic expansion}
\label{sec:devas}
Let us now turn our attention to the transmission eigenvalue problem (\ref{eq:ITEP}). To be able to carry on our computations, we assume that the function $\delta
$ used in (\ref{EqThinLayer}) to define the interior boundary is of the form $\delta(s) = \delta_{0} g(s)$ for some constant $\delta_{0}>0$ and some strictly positive 
$C^\infty$-function $g$ independent of $\delta_{0}$ such that $|\delta_{0} g(s)| < \eta_{0}$. To simplify the notations and since there is no ambiguity, we make no 
distinction between $g$ as a function of local and  global variables. Then, we postulate the following ansatz for the interior transmission eigenvalues and the 
associated eigenfunctions:
\[
k^2_{\delta} = \sum_{j=0}^{\infty} \delta_{0}^{j} \ld_{j}
\]
\begin{equation}
\label{EqExpensionw}
w_{\delta}(x) =\hat{w}_{\delta}(s,\xi)=\sum_{j=0}^{\infty} \delta_{0}^{j}\hat{w}_{j}\left(s,\xi\right)
\end{equation}
\[
v_{\delta}(x) =\sum_{j=0}^{\infty} \delta_{0}^{j}v_{j}(x) 
\]
for $\xi = \eta/\delta_{0}$. We remark that the functions $\hat w_{j}$  are defined on $\mathcal{G}:=\{(s,\xi) \in [0,s_{0}]\times[0,\max(g)]\,,\; \xi \leq g(s)\}$ which is 
independent of $\delta_0$ and we define $w_k(x) = \hat{w}_k(s,\eta/\delta_0)$. Using (\ref{eq:ITEP}) and the expressions for the gradient and divergence 
operators in the local coordinates, we obtain that $(\hat{w}_{\delta},v_\delta)$ satisfies
\begin{equation}
\label{EqLocal}
 \cfrac{1}{(1+\xi \delta_{0} \kappa)}\cfrac{\partial}{\partial s} \cfrac{1}{(1+\xi \delta_{0} \kappa)}\cfrac{\partial}{\partial s} \hat{w}_{\delta} + \cfrac{1}{\delta_{0}^{2}(1+\xi 
\delta_{0}\kappa)}\cfrac{\partial}{\partial\xi}\cfrac{1}{(1+\xi \delta_{0} \kappa)} \cfrac{\partial}{\partial \xi} \hat{w}_{\delta} + k^2_\delta n \hat{w}_{\delta} = 0 \;\text{in}\; 
\mathcal{G}
\end{equation}
together with the boundary conditions
\[
\begin{cases}
 \hat{w}_{\delta}(s,g(s))= 0 &\quad s \in [0,s_{0}], \\
 \hat{w}_{\delta}(s,0)= \wt{v}_{\delta}(s,0)&\quad  s \in [0,s_{0}],\\
\displaystyle{\cfrac{1}{\delta_{0}} \D{\hat{w}_{\delta}}{\xi}|_{\xi=0} =\D{\wt{v}_{\delta}}{\eta}|_{\eta=0}}&\quad s \in [0,s_{0}]\\
\end{cases}
\]
where $\wt{v}_\delta$ is defined by (\ref{tilde}). Let us multiply 
(\ref{EqLocal}) by $\delta_0^2(1+\xi\delta_0\kappa)^3$ to obtain 
\begin{equation*}
\sum_{k=0}^5\delta_0^k A_k  \hat{w}_{\delta} =0
\end{equation*}
where the  $(A_k)_{k=0,\cdots,5}$ are differential operators of order $2$ at maximum with the following expression for the few first terms
\begin{align*}
A_0 &=\D{^2}{\xi^2},\\
A_1 &=3\xi \kappa\D{^2}{\xi^2} +\kappa \D{}{\xi},\\
A_2&= 3\xi^2\kappa^2  \D{^2}{\xi^2} +2 \xi \kappa^2 \D{}{\xi} +   \D{^2}{s^2}+\ld_0 n,\\
A_3 &= \cdots
\end{align*}
Hence, by equating the terms of same order in $\delta$, the function $\hat{w}_{k}$ for $k\in \mathbb {N}$, solves
\begin{eqnarray}
&A_0 \hat{w}_k = - \sum_{l=1}^5 A_l \hat{w}_{k-l}  &\;\; \qquad \text{in}\; \mathcal{G}, \label{eq:volw}\\
& \hat{w}_k(s,g(s)) = 0 & \qquad s \in [0,s_{0}],\label{eq:BCw}\\
&\hat{w}_{k}(s,0) = \wt{v}_k(s,0) & \qquad s \in [0,s_{0}],  \label{eq:BCv}\\
&\displaystyle \D{\hat{w}_k}{\xi}(s,0) = \D{\wt{v}_{k-1}}{\eta}(s,0) & \qquad s \in [0,s_{0}], \label{eq:BCdw}
\end{eqnarray}
with the convention that $\hat{w}_k = v_k=0$ for negative $k$. The functions $v_k$ also satisfy
\begin{equation}
\label{eq:volv}
\Delta v_k + \ld_0 v_k = -\sum_{l=1}^{k}\ld_{l} v_{k-l}.
\end{equation}
Now we can easily obtain the formal expansion at any order by solving \eqref{eq:volw}--\eqref{eq:volv} recursively.
\paragraph*{Order 0.\\}
From \eqref{eq:volw} we have
\[
\frac{\partial^2 \hat{w}_{0}}{\partial^2 \xi} =0  \qquad\text{ in } \mc{G}
\]
and using the boundary conditions (\ref{eq:BCw})  and (\ref{eq:BCdw}) we obtain $\hat{w}_{0}=0$ on $\mathcal{G}$. Equation (\ref{eq:BCv}) together with 
(\ref{eq:volv}) give that $(\ld_{0},v_{0})$ solves
\begin{equation}
\label{EqFirstD}
\begin{cases}
\Delta v_{0} +\ld_{0} v_{0} =0 \text{ in } \Om, \\
v_{0} = 0 \text{ on } \Ga
\end{cases}
\end{equation}
and  hence we define  $(\ld_{0},v_{0})$ as being an eigenpair of the $-\Delta$ in $\Om$ with Dirichlet boundary condition and $\|v_{0}\|_{L^{2}(\Om)}=1$.  We remark  that 
$v_0$ is not uniquely determined (since it can be any Dirichlet eigenfunction), but this will be made precise  later in the convergence analysis.  Nevertheless, we 
assume that $\ld_0$ is simple, which is the case for example for the first Dirichlet eigenvalue of $-\Delta$ in a  Lipschitz and connected domain. The latter 
assumption is necessary to simplify the formal analysis to come.

\paragraph*{Order 1.\\}
Having determined $\hat{w}_0$ and $v_0$ we  iterate the process and obtain that $\hat{w}_{1}$ is the solution to
\[
\frac{\partial^2 \hat{w}_{1}}{\partial \xi^2} =0  \qquad\text{ in } \mc{G}
\]
with boundary conditions \eqref{eq:BCw} and \eqref{eq:BCdw}. That gives
\[
\hat{w}_{1}(s,\xi) =  \D{\wt{v}_{0}}{\eta}(s,0) \xi -g(s) \D{\wt{v}_{0}}{\eta}(s,0) \qquad\text{ in } \mc{G}
\]
The function $v_{1}$ solves
\begin{equation}
\label{Eqv1}
\begin{cases}
\Delta v_{1} + \ld_{0}v_{1} = -\ld_{1}v_{0} \;\text{in} \;\Om, \\
\displaystyle v_{1} = -g  \D{v_{0}}{\ni} \; \text{on}\;\Ga.
\end{cases}
\end{equation}
Since $\ld_{0}$ is a simple eigenvalue for the operator $-\Delta$ with a Dirichlet boundary condition, to ensure uniqueness of $v_1$ we have to constraint $v_1$ 
to be orthogonal to $v_0$ in $L^2(\Om)$. This compatibility condition gives a unique definition for $\ld_1$. By multiplying the first equation of 
\eqref{Eqv1} by $v_0$ and by integrating by part we obtain 
\begin{align}
\ld_{1} &= \displaystyle\int_{\Ga} g \left|\D{v_{0}}{\ni}\right|^{2}\,ds. \label{defl1}
\end{align}
Here we see the simplification due to the assumption that $\ld_0$ is simple. If this does not hold, then the definition of $\ld_1$ does not seem to be obvious.
\paragraph*{Order 2.\\}
To obtain the next term in the asymptotic expansion we iterate the process once more, which yields to the following  equation for $\hat{w}_{2}$
\[
\D{^2\hat{w}_{2}}{\xi^2} +\kappa \D{\hat{w}_{1}}{\xi}=0 \qquad\text{ in } \mc{G}.
\]
The boundary conditions \eqref{eq:BCw} and \eqref{eq:BCdw} on $\Ga$ and $\Ga_{\delta}$ respectively, imply that $\hat{w}_{2}$  is given by
\[
\hat{w}_{2}(s,\xi) = -\frac{\kappa(s)}{2}\D{\wt v_{0}}{\eta}(s,0) \,\xi^{2} + \D{\wt v_{1}}{\eta}(s,0) \, \xi +\frac{ \kappa(s)}{2}   \D{\wt v_{0}}{\eta}(s,0) g(s)^2 - \D{\wt v_{1}}{\eta}
(s,0)g(s) \qquad\text{ in } \mc{G}
\]
where $\kappa$ is the curvature defined in Section \ref{prelim}. From this we deduce that $v_2$ solves
\begin{equation}
\label{eq:v2}
\begin{cases}
\Delta v_2 + \ld_0 v_2 = -\ld_1v_1 -\ld_2 v_0 \text{ in } \Om,\\
\displaystyle v_2 =\frac{ \kappa}{2}  \D{v_{0}}{\ni} g^{2} -  \D{v_{1}}{\ni}g \text{ on } \Ga.
\end{cases}
\end{equation}
Once more, we have to constraint $v_2$ to be orthogonal to $v_0$ and this uniquely defines $\ld_2$ as being
\begin{align}
\ld_2 = & - \int_\Ga \left(  \frac{\kappa}{2}  \D{v_{0}}{\ni} g^{2} - \D{v_{1}}{\ni}g \right) \D{v_0}{\ni}\,ds. \label{ldef2}
\end{align}

\paragraph*{Order k.\\}
Now it becomes clear how to recursively obtain each of the terms in the asymptotic expansion. In particular,  for $k>1$ we assume that the functions $
\hat{w}_l$ and $v_l$ as well as the real numbers $\ld_l$ are well defined for $l<k$. Assume moreover that for all $0<l<k$, 
\begin{equation*}
\int_\Om v_l v_0dx =0
\end{equation*}
and that $\|v_0\|_{L^2(\Om)}=1$.
The first step consists in computing $\hat{w}_k$ by solving \eqref{eq:volw} together with the boundary conditions \eqref{eq:BCw} and \eqref{eq:BCdw}. This 
uniquely determines  $\hat{w}_k$ which leads to an explicit formula for $\hat{w}_k$. Then, by \eqref{eq:BCv} and \eqref{eq:volv}, $v_k$ is uniquely defined as 
being the solution to
\[
\begin{cases}
\Delta v_k + \ld_0 v_k = -\sum_{l=1}^{k} \ld_l v_{k-l}  \text{ in } \Om,\\
v_k = w_k \text{ on } \Ga, \\
\displaystyle\int_\Om v_kv_0 \, ds =0
\end{cases}
\]
where the last equation uniquely defines $\ld_k$ as being
\begin{align*}
\ld_k &= - \int_\Ga w_k \D{v_0}{\ni}ds.
\end{align*}

Of course the asymptotic expansion obtained above is only formal at this point. The next section is dedicated to its convergence analysis.
\section{Convergence analysis}
Our main goal in this section is to rigorously justify the asymptotic expansion formally obtained in the previous section. To this end, for sake of simplicity of 
presentation and to avoid secondary technical difficulties we assume that  the thickness of the thin layer is constant (i.e. $g\equiv 1$), that $n$ is in $C^
\infty(\b{\Om}\setminus\Om_\delta)$ and that $\Ga$ is of class $C^\infty$ as well. Moreover, we only perform the convergence analysis for the first transmission 
eigenvalue that we denote $\ld^1_\delta := (k^1_\delta)^2$. More specifically, in the following we justify the expansion 
\begin{equation}\label{main-asymp}
\ld_\delta^1=\ld_0 + \delta \ld_1 + \delta^2 \ld_2 + \O(\delta^3) 
\end{equation}
where $\lambda_0$ is the first Dirichlet eigenvalue for the $-\Delta$ operator in $\Om$, $\lambda_1$ and $\lambda_2$ are given in the previous section and $
\O(x)$ stands for a generic function in  $C^\infty(\bR^+)$ such that  
\[
 |\O(x)| \leq C|x|
\]
for some constant $C>0$ independent of  $x \in \bR^+$.
The main ingredient to arrive at  such a result is to establish explicit a priori estimates with respect to $\delta$ for the solutions of the interior transmission problem
\begin{equation*}
\begin{cases}
\Delta w_\delta=f_1 \text{ in }  \lay, \\
\Delta v_\delta =f_2 \text{ in } \Om, \\
\displaystyle  \D{v_\delta}{\nnn} - \D{w_\delta}{\nnn}=f_3 \,, \quad v_\delta-w_\delta = f_4 \text{ on }  \Ga,\\
w_\delta =0 \text{ on } \Ga_\delta.
\end{cases}
\end{equation*}
These stability estimates  are stated in the two following propositions. The proof of these propositions requires a few technical lemmas which we state and prove in 
Appendix in order to maintain the main focus of this paper.
\begin{proposition}
\label{prop:generalbound}
Let $v_\delta \in H^2(\Om)$ and $w_\delta \in H^2(\lay)$ be such that for some $s\geq0$
\begin{eqnarray}
&&\|\Delta w_\delta\|_{L^2(\lay)} \leq \O (\delta^{s}) \label{eq:b1} \\
&&w_\delta =0 \text{ on } \Ga_\delta \notag\\
&&\|\Delta v_\delta\|_{L^2(\Om)} \leq \O(\delta^{s}) \label{eq:bd3}\\
&&\left\| \D{v_\delta}{\ni} - \D{w_\delta}{\ni} \right\|_{H^{1/2}(\Ga)} \leq \O(\delta^s) \label{eq:bd4}\\
&&\left\| v_\delta - w_\delta  \right\|_{H^{3/2}(\Ga)} \leq \O(\delta^s).
\end{eqnarray}
 Then, for sufficiently small $\delta >0$,
\[
\|w_{\delta}\|_{H^{2}(\lay)} \leq \O(\delta^s) \text{ and
 } \|w_{\delta}\|_{L^{2}(\lay)} \leq \O(\delta^{s+1}).
\]
\end{proposition}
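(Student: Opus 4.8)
The guiding principle is that $v_\delta$ lives on the \emph{fixed} domain $\Om$, where it is governed by elliptic estimates with $\delta$-independent constants, whereas $w_\delta$ lives on the thin layer and is pinned to zero on $\Ga_\delta$, a distance $\delta$ from $\Ga$. One therefore transfers control from $v_\delta$ to $w_\delta$ through the (approximate) matching of Cauchy data on $\Ga$, and closes the estimate by a bootstrap in which the layer width $\delta$ supplies exactly the gain needed to absorb the loss coming from the degenerate geometry.

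First, rescaling $\lay$ onto the fixed domain $\mathcal{G}$ through $\xi=\eta/\delta$ and invoking the $\delta$-explicit interior elliptic estimate proved in the Appendix, one bounds $\|w_\delta\|_{H^2(\lay)}$ in terms of $\|\Delta w_\delta\|_{L^2(\lay)}$ and a weighted (negative powers of $\delta$) trace norm of $w_\delta$ on $\Ga$, the Dirichlet datum on $\Ga_\delta$ being zero; thus it suffices to control the traces of $w_\delta$ on $\Ga$ with enough extra powers of $\delta$ to absorb those weights. Writing $f_3:=\D{v_\delta}{\ni}-\D{w_\delta}{\ni}$ and $f_4:=v_\delta-w_\delta$ on $\Ga$ (so that $\|f_3\|_{H^{1/2}(\Ga)}+\|f_4\|_{H^{3/2}(\Ga)}\le\O(\delta^s)$ and $w_\delta=v_\delta-f_4$, $\D{w_\delta}{\ni}=\D{v_\delta}{\ni}-f_3$ on $\Ga$), and observing that $\Delta v_\delta=:f_2$ holds on the fixed domain $\Om$ with $\|f_2\|_{L^2(\Om)}\le\O(\delta^s)$ while $0$ is not a Dirichlet eigenvalue of $-\Delta$ in $\Om$, the standard Dirichlet a priori estimate together with the trace theorem (both with $\delta$-independent constants) give $\|\D{v_\delta}{\ni}\|_{H^{1/2}(\Ga)}\lesssim\|v_\delta\|_{H^2(\Om)}\lesssim\O(\delta^s)+\|v_\delta|_\Ga\|_{H^{3/2}(\Ga)}$, hence $\|\D{w_\delta}{\ni}\|_{H^{1/2}(\Ga)}\lesssim\O(\delta^s)+\|w_\delta|_\Ga\|_{H^{3/2}(\Ga)}$.

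Next comes the complementary use of the thin-layer geometry: the trace lemma of the Appendix, which exploits $w_\delta=0$ on $\Ga_\delta$ and $\|\Delta w_\delta\|_{L^2(\lay)}\le\O(\delta^s)$ via the fundamental theorem of calculus in the normal variable, produces a gain of the form $\|w_\delta|_\Ga\|_{H^{3/2}(\Ga)}\lesssim\delta\,\|\D{w_\delta}{\ni}\|_{H^{1/2}(\Ga)}+\O(\delta^{s+1})$. Combining with the previous inequality yields $\|w_\delta|_\Ga\|_{H^{3/2}(\Ga)}\lesssim\delta\,\|w_\delta|_\Ga\|_{H^{3/2}(\Ga)}+\O(\delta^{s+1})$, so that for $\delta$ small the first term on the right is absorbed, $\|w_\delta|_\Ga\|_{H^{3/2}(\Ga)}\le\O(\delta^{s+1})$, and then $\|\D{w_\delta}{\ni}\|_{H^{1/2}(\Ga)}\le\O(\delta^s)$. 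Feeding these traces back into the layer a priori estimate gives $\|w_\delta\|_{H^2(\lay)}\le\O(\delta^s)$. Finally, since $w_\delta$ vanishes on $\Ga_\delta$ and the layer has width $\delta$, a Poincaré inequality across the layer gives $\|w_\delta\|_{L^2(\lay)}\lesssim\delta\,\|\nabla w_\delta\|_{L^2(\lay)}\lesssim\delta\,\|w_\delta\|_{H^2(\lay)}\le\O(\delta^{s+1})$.

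The bookkeeping of this bootstrap is routine; the real obstacle is the pair of $\delta$-explicit estimates it rests on — the interior elliptic estimate on $\lay$ and the thin-layer trace lemma. After rescaling, the layer problem is anisotropic and only degenerate elliptic (since $\partial_\eta\sim\delta^{-1}\partial_\xi$), so one must follow precisely how every norm scales with $\delta$ and verify that the negative powers of $\delta$ produced by the a priori estimate are exactly compensated by the positive powers gained from the layer width in the trace estimates; this calibration is what dictates the trace spaces $H^{1/2}(\Ga)$ and $H^{3/2}(\Ga)$ appearing in the hypotheses. Establishing these two estimates with their explicit $\delta$-dependence — the technical lemmas the paper defers to the Appendix — is the heart of the argument.
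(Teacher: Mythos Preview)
Your bootstrap scheme hinges on the inequality
\[
\|w_\delta|_\Ga\|_{H^{3/2}(\Ga)}\ \lesssim\ \delta\,\Bigl\|\D{w_\delta}{\ni}\Bigr\|_{H^{1/2}(\Ga)}+\O(\delta^{s+1}),
\]
and this is where the argument breaks. No such estimate appears in the Appendix, and in fact it is false: take the flat model $w_\delta(s,\eta)=(\delta-\eta)\phi(s)$ with $\phi$ smooth and $\delta$-independent; then $w_\delta|_{\Ga_\delta}=0$, $\|w_\delta|_\Ga\|_{H^{3/2}}=\delta\|\phi\|_{H^{3/2}}$, $\delta\|\partial_\ni w_\delta|_\Ga\|_{H^{1/2}}=\delta\|\phi\|_{H^{1/2}}$, and $\|\Delta w_\delta\|_{L^2(\lay)}=\O(\delta^{3/2})$, so your inequality would force $\|\phi\|_{H^{3/2}}\lesssim\|\phi\|_{H^{1/2}}+\O(\delta^{1/2})$, impossible for small $\delta$. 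The actual thin-layer trace lemma in the Appendix is $\|w\|_{H^{1/2}(\Ga)}\le C\delta^{1/2}\|w\|_{H^{2}(\lay)}$, which is of a completely different shape and does not let you close a direct bootstrap on the $H^{3/2}$ trace. Relatedly, your layer elliptic estimate with ``negative powers of $\delta$'' is not what is proved either: the Appendix establishes the $\delta$-\emph{uniform} bound $\|w\|_{H^2(\lay)}\le C(\|\Delta w\|_{L^2(\lay)}+\|\partial_\ni w|_\Ga\|_{H^{1/2}(\Ga)})$ and, crucially, the additional \emph{qualitative} fact that the contribution from bounded Neumann data tends to zero as $\delta\to0$.

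The paper closes the estimate not by a direct bootstrap but by contradiction: assuming $\gamma_\delta:=\delta^s\|w_\delta\|_{H^2(\lay)}^{-1}\to0$, one first uses elliptic regularity on the fixed domain $\Om$ together with the $\delta$-uniform trace bound to get $\|\partial_\ni w_\delta\|_{H^{1/2}(\Ga)}/\|w_\delta\|_{H^2(\lay)}\le C$; then the $\delta$-uniform layer estimate applied to $w_\delta/\|w_\delta\|_{H^2(\lay)}$ gives
\[
1\ \le\ C\Bigl(\frac{\|\Delta w_\delta\|_{L^2(\lay)}}{\|w_\delta\|_{H^2(\lay)}}+\ve(\delta)\Bigr),\qquad \ve(\delta)\to0,
\]
where $\ve(\delta)$ is exactly the qualitative vanishing of the Neumann contribution. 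Since the first term is $\O(\gamma_\delta)\to0$, one reaches $1\le o(1)$, a contradiction. The $L^2$ bound then follows from the trace lemma as you indicate. So your overall architecture (transfer control via $v_\delta$ on $\Om$, then use layer smallness) is correct in spirit, but the missing ingredient is the qualitative $\ve(\delta)\to0$ from Lemma~\ref{LeBoundUnif}, used inside a contradiction argument rather than a quantitative trace gain that does not exist.
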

\begin{proof}
First we prove by contradiction that $\|w_{\delta}\|_{H^2(\lay)} \leq
\O(\delta^s)$. Assume   to the contrary that the latter is not true, then we
can state that (up to an extracted subsequence)
\[
\ga_\delta := \delta^s \|w_{\delta}\|^{-1}_{H^2(\lay)} \underset{\delta\to 0}{\longrightarrow}  0.
\]
Since $w_{\delta}$ is in $H^2(\lay)$ and since $\left\| v_\delta - w_\delta  \right\|_{H^{3/2}(\Ga)} \leq \O(\delta^s)$, from classic elliptic regularity for the Laplacian 
with Dirichlet boundary condition there exists a constant $C$ independent of $\delta$ such that 
\[
\|v_{\delta}\|_{H^{2}(\Om)} \leq C\left(  \|\Delta v_{\delta}\|_{L^{2}(\Om)}+ \|w_{\delta}\|_{H^{3/2}(\Ga)} + \O(\delta^s)\right)
\]
and by using \eqref{eq:bd3}, Lemma \ref{Le:Trace} and the fact that $\ga_\delta$ is bounded when $\delta \to 0$ we deduce that
\[
\frac{\|v_{\delta}\|_{H^{2}(\Om)}}{ \|w_{\delta}\|_{H^2(\lay)} } \leq \frac{1}{\|w_{\delta}\|_{H^2(\lay)}}\left( C \|w_{\delta}\|_{H^{2}(\lay)}  + \O(\delta^s)\right) \leq C
\]
for $C>0$ independent of $\delta$. Hence by \eqref{eq:bd4}  and the trace theorem we have that
\[
\frac{1}{ \|w_{\delta}\|_{H^2(\lay)} } \left\|\D{w_{\delta}}{\ni}\right\|_{H^{1/2}(\Ga)} \leq C.
\]
Now by applying  Lemma \ref{LeRegInd} and Lemma \ref{LeBoundUnif} to the function $ w_{\delta}/ \|w_{\delta}\|_{H^2(\lay)} $, we have that  there exists a 
constant $C>0$ independent of $\delta$ such that
\[
1  \leq C\left(  \frac{\|\Delta w_{\delta}\|_{L^{2}(\lay)}}{\|w_{\delta}\|_{H^2(\lay)}}+ \ve(\delta)\right)
\]
where $\ve(\delta) \rightarrow 0$ when $\delta$ goes to $0$. Hence by using \eqref{eq:b1} since $\ga_\delta = \delta^s \|w_{\delta}\|_{H^2(\lay)}^{-1}$ goes to $0$ 
when $\delta$ goes to $0$  we obtain
\[
1 \leq \ve(\delta) \quad \underset{\delta \to 0}{\longrightarrow 0}.
\]
Thus the first estimate of the lemma holds, and then an application of \eqref{eq:TraceVol} together with \eqref{eq:TraceBord} imply the second estimate.
\end{proof}

\begin{proposition}
\label{prop:wH1}
Let $w_\delta \in H^2(\lay)$ be such that for some $s\geq0$ we have
\[
\|\Delta w_\delta \|_{L^2(\lay)} \leq \O( \delta^s)
\]
and 
\[
w_\delta =0 \text{ on } \Ga_\delta \ , \quad \left\|\D{w_\delta}{\ni} \right\|_{H^{-1/2}(\Ga)} \leq \O( \delta^{s+1/2}).
\]
Then, for sufficiently small $\delta$,
\[
\|w_\delta\|_{H^1(\lay)} \leq \O( \delta^{s+1/2}) \text{ and }
\|w_\delta\|_{L^2(\lay)} \leq \O( \delta^{s+1}).
\]
\end{proposition}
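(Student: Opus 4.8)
The plan is to read the bounds off the variational identity obtained by multiplying $\Delta w_\delta$ by $\b{w_\delta}$ and integrating by parts over the thin collar $\lay$, exploiting the homogeneous Dirichlet condition on $\Ga_\delta$ twice: once to discard a boundary contribution, and once to gain a factor $\delta$ through a thin-layer Poincar\'e inequality. Concretely, Green's formula on $\lay$ --- whose boundary is $\Ga\cup\Ga_\delta$, with outward normal $-\ni$ along $\Ga$ --- together with $w_\delta=0$ on $\Ga_\delta$, gives
\[
\int_\lay|\nabla w_\delta|^2\dx = -\int_\lay(\Delta w_\delta)\,\b{w_\delta}\dx-\lsp\D{w_\delta}{\ni},\,w_\delta\rsp_\Ga ,
\]
where $\lsp\cdot,\cdot\rsp_\Ga$ is the $H^{-1/2}(\Ga)$--$H^{1/2}(\Ga)$ duality (a meaningful pairing since $w_\delta\in H^2(\lay)$). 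Taking absolute values,
\[
\|\nabla w_\delta\|_{L^2(\lay)}^2\le\|\Delta w_\delta\|_{L^2(\lay)}\|w_\delta\|_{L^2(\lay)}+\Bigl\|\D{w_\delta}{\ni}\Bigr\|_{H^{-1/2}(\Ga)}\|w_\delta\|_{H^{1/2}(\Ga)} .
\]

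I would then control the two factors that do not involve $\nabla w_\delta$. For the volume term I would use the thin-layer Poincar\'e inequality $\|w_\delta\|_{L^2(\lay)}\le C\delta\,\|\nabla w_\delta\|_{L^2(\lay)}$, whose constant is of order $\delta$ precisely because $w_\delta=0$ on $\Ga_\delta$ and the Jacobian $1+\eta\kappa$ of the local change of coordinates is bounded away from $0$ and $\infty$ for $\delta$ small (this is among the technical estimates proved in the Appendix, cf.\ \eqref{eq:TraceVol}, \eqref{eq:TraceBord}). For the trace term, the key --- and perhaps counter-intuitive --- point is that no scaled trace inequality is needed: extending $w_\delta$ by zero into $\Om_\delta$ produces a function $W\in H^1(\Om)$ (the extension is $H^1$ across $\Ga_\delta$ exactly because $w_\delta$ vanishes there), with $\|W\|_{H^1(\Om)}=\|w_\delta\|_{H^1(\lay)}$ and $W|_\Ga=w_\delta|_\Ga$, so the ordinary trace theorem on $\Om$ gives $\|w_\delta\|_{H^{1/2}(\Ga)}\le C\|w_\delta\|_{H^1(\lay)}$ with $C$ independent of $\delta$. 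Combined with the Poincar\'e bound, $\|w_\delta\|_{H^1(\lay)}^2=\|w_\delta\|_{L^2(\lay)}^2+\|\nabla w_\delta\|_{L^2(\lay)}^2\le(1+C\delta^2)\|\nabla w_\delta\|_{L^2(\lay)}^2$.

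Writing $X:=\|\nabla w_\delta\|_{L^2(\lay)}$ and inserting these estimates, together with the hypotheses $\|\Delta w_\delta\|_{L^2(\lay)}\le\O(\delta^s)$ and $\|\partial w_\delta/\partial\ni\|_{H^{-1/2}(\Ga)}\le\O(\delta^{s+1/2})$, into the energy inequality, I would obtain, for $\delta$ sufficiently small,
\[
X^2\le C\delta^{s+1}X+C\delta^{s+1/2}X\le C\delta^{s+1/2}X ,
\]
hence $X\le C\delta^{s+1/2}$ (the case $X=0$ being trivial), and therefore $\|w_\delta\|_{H^1(\lay)}\le(1+C\delta^2)^{1/2}X=\O(\delta^{s+1/2})$ and $\|w_\delta\|_{L^2(\lay)}\le C\delta X=\O(\delta^{s+3/2})=\O(\delta^{s+1})$. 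I expect the only genuinely delicate step to be pinning down the exact power of $\delta$ in the Poincar\'e inequality and verifying that the Poincar\'e and trace constants do not degenerate as $\delta\to0$; once one observes that the hypothesis on the normal trace already supplies the extra $\delta^{1/2}$ that compensates for the scaling loss in the plain trace inequality, the argument closes with no further work.
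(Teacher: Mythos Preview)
Your argument is correct and follows essentially the same route as the paper: Green's identity on the layer, then control of the volume term by a Poincar\'e-type inequality and of the boundary term by a $\delta$-uniform trace estimate. The only differences are cosmetic: the paper uses the weaker bound $\|w_\delta\|_{L^2(\lay)}\le C\delta^{1/2}\|w_\delta\|_{H^1(\lay)}$ (obtained from \eqref{eq:TraceVol} and Lemma~\ref{Le:Trace}) rather than your sharper $\|w_\delta\|_{L^2(\lay)}\le C\delta\|\nabla w_\delta\|_{L^2(\lay)}$, and it invokes Lemma~\ref{Le:Trace} directly for the trace instead of extending by zero into $\Om$; your version thus actually yields $\|w_\delta\|_{L^2(\lay)}=\O(\delta^{s+3/2})$, which is stronger than what is claimed.
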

\begin{proof}
First by  \eqref{eq:TraceVol} and  Lemma \ref{Le:Trace} there exists $C>0$ such that for sufficiently small $\delta>0$  we have 
\begin{equation}
\label{eq:wdeltal2h1}
\|w_\delta\|_{L^2(\lay)} \leq C \delta^{1/2}\|w_\delta\|_{H^1(\lay)}.
\end{equation}
An application of  Green's identity in  $\lay$  yields
\[
\int_{\lay} |\nabla w_\delta|^{2}\, dx =\int_{\lay} -\Delta w_\delta w_\delta\, dx - \int_{\Ga} \D{w_\delta}{\ni}w_\delta\,ds, 
\]
and using the assumptions of the lemma and \eqref{eq:wdeltal2h1}  we finally obtain 
\[
\int_{\lay} |\nabla w_\delta|^{2} \leq \O( \delta^{s+1/2}) \|w_\delta\|_{H^1(\lay)}
\]
which proves the first estimate. We obtain the second estimate by simply using \eqref{eq:wdeltal2h1}.
\end{proof}

Now with help of  the above propositions we are able to prove the convergence  of our asymptotic expansion.

\subsection{The convergence of the zero order approximation }
We start with the convergence of the zero order term  in the expansion, which can be easily obtained from the expression satisfied by  the first transmission 
eigenvalue.
\begin{theorem}
\label{ThConvEigenvalue1}
Let $\ld^{1}_{\delta}$ be the first real interior transmission eigenvalue, then
for sufficiently small $\delta >0$,
\[
 \ld_\delta^1 = \ld_0 + \O(\delta).
\]
\end{theorem}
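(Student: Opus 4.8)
The natural tool is the Max-Min (Courant-Fischer) characterization of the first eigenvalue $\ld^1_\delta$ associated with the operator pair $(A_{k},B)$. By Proposition~\ref{prop:Acoercive} the form $(A_k u,u)_{W_\delta}$ is coercive with a constant independent of $\delta$ and $B$ is compact, so the generalized eigenvalues of \eqref{misp} are characterized variationally; in particular the first transmission eigenvalue $\ld^1_\delta = (k^1_\delta)^2$ can be written as an infimum over $W_\delta$ of a Rayleigh-type quotient. The plan is to obtain matching upper and lower bounds $\ld_0 + \O(\delta)$ by judicious choices of test functions and by exploiting the structure of $A_{k_\delta}$, namely that the term $\int_{\lay}\frac{1}{1-n}(\Delta u + k^2 u)(\Delta\bar v + k^2 \bar v)\,dx$ is supported only on the thin layer $\lay$, whose measure is $\O(\delta)$.

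\textbf{Upper bound.} First I would produce the estimate $\ld^1_\delta \le \ld_0 + \O(\delta)$ by inserting into the variational characterization a suitable trial function built from the first Dirichlet eigenfunction $v_0$ of $-\Delta$ on $\Om$. One takes $u$ equal to $-v_0$ in $\Om_\delta$ (so that the would-be $v_\delta$ part matches $v_0$) and extends it into the layer $\lay$ as a corrected function that vanishes on $\Ga_\delta$, satisfies $\partial u/\partial\ni = 0$ on $\Ga$, and is $H^1_0(\Om)$-conforming; the $\xi$-polynomial profiles $\hat w_0, \hat w_1$ from the formal expansion in Section~3 (with $\hat w_0 \equiv 0$, $\hat w_1$ linear in $\xi$) are exactly what is needed. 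Since the layer has width $\delta$, the extra contributions to both $(A_{k_\delta}u,u)_{W_\delta}$ and $(Bu,u)_{W_\delta}$ coming from $\lay$ are $\O(\delta)$, and one reads off that the quotient is $\ld_0 + \O(\delta)$. Care is needed to keep all constants independent of $\delta$, which is where the $\delta$-explicit trace estimates (Lemma~\ref{Le:Trace}) and the fact that $\|v_0\|_{L^2(\Om)}=1$ enter.

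\textbf{Lower bound.} For the reverse inequality $\ld^1_\delta \ge \ld_0 + \O(\delta)$, one uses the eigenfunction $u_\delta$ itself (normalized by $\|u_\delta\|_{L^2(\Om)}=1$) together with the min-max characterization of $\ld_0$ as $\inf_{\varphi\in H^1_0(\Om)} \|\nabla\varphi\|^2_{L^2(\Om)}/\|\varphi\|^2_{L^2(\Om)}$. From \eqref{misp} one has an identity relating $\ld^1_\delta$ to $\|\nabla u_\delta\|^2_{L^2(\Om)}$, $\|u_\delta\|^2_{L^2(\Om)}$ and the layer integral $\int_{\lay}\frac{1}{1-n}|\Delta u_\delta + k_\delta^2 u_\delta|^2\,dx$ (which is nonnegative since $n<1$); the idea is that this layer term, together with the contribution of $\lay$ to $\|\nabla u_\delta\|^2$ and $\|u_\delta\|^2$, is controlled by $\O(\delta)$, so that $u_\delta$ restricted/extended to $\Om$ is a nearly admissible Dirichlet trial function yielding $\ld^1_\delta \ge \ld_0 - \O(\delta)$. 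The main obstacle here, and the place I expect the real work to be, is showing that the eigenfunction $u_\delta$ does not concentrate in the layer — i.e. that $\|u_\delta\|_{H^1(\lay)}$ and the layer term are genuinely $\O(\delta^{1/2})$-small — for which Proposition~\ref{prop:wH1} (or a direct Poincaré-type argument using $w_\delta = 0$ on $\Ga_\delta$ and the $\delta$-width of $\lay$) is the key ingredient. Combining the two bounds gives $\ld^1_\delta = \ld_0 + \O(\delta)$.
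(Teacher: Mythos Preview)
Your plan is in the right spirit but both halves are more laborious than what the paper does, and the obstacle you flag for the lower bound is illusory.

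\textbf{Lower bound.} You anticipate needing to show that $u_\delta$ does not concentrate in the layer. This is unnecessary. From the identity you mention (with $\|u_\delta\|_{L^2(\Om)}=1$),
\[
(\ld^1_\delta)^2 \;=\; \int_{\lay}\frac{1}{1-n}\,|\Delta u_\delta + \ld^1_\delta u_\delta|^2\,dx \;+\; \ld^1_\delta \int_\Om |\nabla u_\delta|^2\,dx,
\]
the layer term is nonnegative because $n<1$, so dividing by $\ld^1_\delta>0$ and using $u_\delta\in H^1_0(\Om)$ gives at once $\ld^1_\delta \ge \|\nabla u_\delta\|^2_{L^2(\Om)} \ge \ld_0$ by Poincar\'e. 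The paper phrases this equivalently as positivity of $((A_\ld - \ld^2 B)u,u)_{W_\delta}$ for every $\ld<\ld_0$, hence no transmission eigenvalue below $\ld_0$. No a priori eigenfunction estimates, and no Proposition~\ref{prop:wH1}, are needed here; in fact those estimates appear in the paper only \emph{after} Theorem~\ref{ThConvEigenvalue1}.

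\textbf{Upper bound.} Your test function built from $v_0$ plus the layer profile $\delta\hat w_1$ does not land in $W_\delta$ as described: since $\hat w_1(s,0)=-g(s)\,\partial_\ni v_0(s)$, the trace of $u=\delta w_1 - v_0$ on $\Ga$ equals $-\delta g\,\partial_\ni v_0\neq 0$, so $u\notin H^1_0(\Om)$. This can be repaired with further corrections, but the bookkeeping is delicate. The paper sidesteps the construction entirely by taking as test function the first Dirichlet eigenfunction $u^1_{\delta,D}$ of $-\Delta$ on the \emph{smaller} domain $\Om_\delta$, extended by zero into $\lay$. This extension is automatically in $W_\delta$ (it is identically zero near $\Ga$, so $u=0$ and $\partial u/\partial\ni=0$ there, and $\Delta u=0$ in $\lay$), the layer integral vanishes, and one reads off the clean inequality $\ld^1_\delta \le \ld^1_{\delta,D}$. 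The remaining step $\ld^1_{\delta,D}=\ld_0+\O(\delta)$ is then the standard shape-differentiability of the first Dirichlet eigenvalue. Thus the paper obtains an exact lower bound $\ld_0\le\ld^1_\delta$ from a sign argument and an upper bound via a trivial test function plus a known domain-perturbation result, avoiding any hand-built layer correction.
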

\begin{proof}
We first observe that for $\ld< \ld_0$, where $\ld_0$ is the first Dirichlet  eigenvalue for $-\Delta$ in $\Om$, the operator $A_{\ld}  - \ld^{2} B$ defined in Section \ref{sec:description}
(where we set $\lambda:=k^2$) is injective. Indeed for all $u \in W_\delta$,
\begin{align*}
((A_{\ld}  - \ld^{2} B)u,u)_{W_\delta} &= \int_{\lay} \frac{1}{1-n} |\Delta u + \ld u| dx - \ld^2 \int_\Om |u|^2 dx + \ld \int_\Om| \nabla u| ^2 dx\\
&\geq \ld \|\nabla u\|^2_{L^2(\Om)} \left( 1 - \frac{\ld}{\ld_0}\right)
\end{align*}
where we have used  the Poincar{\'e} inequality. Hence we necessarily have
\begin{equation}\label{lb}
\ld_0 \leq\ld_{\delta}^{1}.
\end{equation}
On the other hand it is possible to characterize $\ld_{\delta}^{1}$ via the Max-Min principle (see \cite{CaCoHa12} for details) as 
\begin{align*}
2(\ld_{\delta}^{1})^{2} = &\inf_{\small \begin{array}{rrcll} u\in W_\delta \;\;\; \\ \|u\|_{L^{2}(\Om)}=1 \end{array}} (A_{\ld^{1}_{\delta}} u,u)_{W_{\delta}} \\
=& \inf_{\small \begin{array}{rrcll} u\in W_\delta \;\;\; \\ \|u\|_{L^{2}(\Om)}=1 \end{array}} \int_{\lay}\frac{1}{1-n} |\Delta u+\ld_{\delta}^{1} u|^{2} \,dx +(\ld_{\delta}
^{1})^{2} + \ld_{\delta}^{1} \int_{\Om} |\nabla u|^{2}\,dx.
\end{align*}
Now if we take $u=u^{1}_{\delta,D}$ where $u^{1}_{\delta,D}$ the first Dirichlet eigenfunction for $-\Delta$ in $\Om_{\delta}$ associated with the eigenvalue $ 
\ld^{1}_{\delta,D}$ extended by $0$ outside $\Om_{\delta}$  such that $\|u^{1}_{\delta,D}\|_{L^{2}(\Om_{\delta})}=1$ (note that due to the zero boundary condition 
on $\Ga_\delta$ the extension by zero of $u^{1}_{\delta,D}$ is in $W_\delta$) we obtain
\[
(\ld_{\delta}^{1})^{2} \leq  \ld_{\delta}^{1}\,\ld^{1}_{\delta,D}
\] 
or equivalently
\begin{equation}\label{ub}
\ld_{\delta}^{1} \leq \ld^{1}_{\delta,D}
\end{equation}
since $\ld_\delta^1$ is bounded below by $\ld_0$. To conclude, we remark that the first Dirichlet eigenvalue for $-\Delta$ is Fr\'echet differentiable  
with respect to the shape (see \cite{Hen06}). A consequence of this result is that there exists a constant $C>0$ such that for all $\delta$ sufficiently small
\[
|\ld_{\delta,D}^{1} - \ld_0|\leq C\delta.
\]
This  fact  together with the lower and upper bounds (\ref{lb}) and (\ref{ub}) respectively, ends the proof.
\end{proof}
\begin{remark}
Theorem \ref{ThConvEigenvalue1} is the cornerstone of our analysis since it allows to uniquely define the Dirichlet eigenvalue which is the first term in the 
asymptotic expansion of the transmission eigenvalue. The other terms in \eqref{main-asymp} are then uniquely defined.
\end{remark}

\subsection{The convergence of the first order approximation }
In order to  proceed with next  order approximation, we must  first  prove some estimates for the first order approximation of  the corresponding eigenfunction. To 
this end let  us define
\begin{equation}\label{eq1}
e^1_w := w_\delta^1 -\delta w_1 \qquad\text{  in } \lay
\end{equation}
extended by $0$ in $\Om_\delta$ and
\begin{equation}\label{eq2}
e^1_v := v_\delta^1 -v_0 \qquad\text{  in } \Om
\end{equation}
where $v_{0}$ is a solution to (\ref{EqFirstD}) such that $\|v_{0}\|_{L^{2}(\Om)}=1$, $w_1(x) := \hat{w}_1(s,\xi)$ with $x=\vp(s,\delta \xi)$, and $(w_\delta^1, v_
\delta^1)$  are the eigenfuctions corresponding to the first transmission eigenvalue $\lambda^1_\delta$. We also extend $w_\delta^1$ by $0$ inside $\Om_\delta
$. We remark that since $w_\delta^1$ and $w_1$ vanish on $\Ga_\delta$, the functions $e^1_w$ and $w_\delta^1$ are continuous across the interface $\Ga_
\delta$. Let us begin with a lemma that provides $\delta$-explicit a priori estimates for  $w_\delta^1$ which will enable us to derive estimates for the first order 
approximation of $w_\delta^1$ and $v_\delta^1$. 
\begin{lemma}
\label{le:wdelta1}
There exists a constant $C>0$ such that for sufficiently small  $\delta>0$ we have
\begin{equation*}
\|w_{\delta}^{1}\|_{H^{2}(\lay)} \leq C \quad \text{and} \quad \|w_{\delta}^{1}\|_{L^{2}(\lay)} \leq \O(\delta).
\end{equation*}
\end{lemma}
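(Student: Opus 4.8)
The plan is to reduce the statement to Proposition~\ref{prop:generalbound} applied with $s=0$, after first extracting from the self‑adjoint operator formulation of the problem a set of $\delta$‑uniform (but not yet sharp) bounds on the eigenfunctions. First I would use \eqref{misp}: since $(w^1_\delta,v^1_\delta)$ are eigenfunctions, the function $u^1_\delta\in W_{\delta}$ built from them through \eqref{eq:defu} solves $A_{k^1_\delta}u^1_\delta-(\ld^1_\delta)^2 Bu^1_\delta=0$ and satisfies $\|u^1_\delta\|_{L^2(\Om)}=1$ by our normalization. Pairing this identity with $u^1_\delta$ in $W_{\delta}$ yields $(A_{k^1_\delta}u^1_\delta,u^1_\delta)_{W_{\delta}}=2(\ld^1_\delta)^2$; combining the $\delta$‑independent coercivity of $A_{k}$ from Proposition~\ref{prop:Acoercive} with the bound $\ld_0\le\ld^1_\delta\le\ld_0+\O(\delta)$ coming from \eqref{lb} and Theorem~\ref{ThConvEigenvalue1} gives $\|u^1_\delta\|_{W_{\delta}}\le C$ uniformly in $\delta$. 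Reading off the definition of $u^1_\delta$, this already provides $\|v^1_\delta\|_{H^1(\Om_\delta)}\le C$, $\|w^1_\delta-v^1_\delta\|_{H^1(\lay)}\le C$, and $\|\Delta(w^1_\delta-v^1_\delta)\|_{L^2(\lay)}\le C$.

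Next I would turn these into separate $L^2$‑control of $w^1_\delta$ and $v^1_\delta$. In $\lay$ the Helmholtz equations give $\Delta(w^1_\delta-v^1_\delta)=-\ld^1_\delta(n w^1_\delta-v^1_\delta)$, so since $\ld^1_\delta\ge\ld_0>0$ the bound on $\|\Delta(w^1_\delta-v^1_\delta)\|_{L^2(\lay)}$ forces $\|n w^1_\delta-v^1_\delta\|_{L^2(\lay)}\le C$. Subtracting this from $\|w^1_\delta-v^1_\delta\|_{L^2(\lay)}\le C$ and using $1-n\ge 1-n^*>0$ a.e.\ in $\lay$ yields $\|w^1_\delta\|_{L^2(\lay)}\le C$, hence also $\|v^1_\delta\|_{L^2(\lay)}\le C$; together with $\|v^1_\delta\|_{L^2(\Om_\delta)}\le\|u^1_\delta\|_{L^2(\Om)}=1$ this gives $\|v^1_\delta\|_{L^2(\Om)}\le C$. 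Using the Helmholtz equations once more, $\|\Delta v^1_\delta\|_{L^2(\Om)}=\ld^1_\delta\|v^1_\delta\|_{L^2(\Om)}\le C$ and $\|\Delta w^1_\delta\|_{L^2(\lay)}=\ld^1_\delta\|n w^1_\delta\|_{L^2(\lay)}\le C$.

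Finally I would apply Proposition~\ref{prop:generalbound} with $s=0$ to the pair $(w^1_\delta,v^1_\delta)$: the regularity $v^1_\delta\in H^2(\Om)$ and $w^1_\delta\in H^2(\lay)$ is supplied by Lemma~\ref{LeEllReg} (since $\Ga$ and $n$ are smooth), the transmission conditions in \eqref{eq:ITEP} make both jump terms across $\Ga$ vanish identically, $w^1_\delta=0$ on $\Ga_\delta$, and the two $\Delta$‑bounds were just established. The proposition then delivers exactly $\|w^1_\delta\|_{H^2(\lay)}\le\O(1)=C$ and $\|w^1_\delta\|_{L^2(\lay)}\le\O(\delta)$, which is the claim.

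The point to watch — and the reason for the detour through the coercivity estimate — is an apparent circularity: the hypothesis $\|\Delta w^1_\delta\|_{L^2(\lay)}\le\O(1)$ needed to invoke Proposition~\ref{prop:generalbound} is only accessible through $\|w^1_\delta\|_{L^2(\lay)}$, which is itself part of the conclusion we are after. This is broken by first obtaining a \emph{merely bounded} (not yet $\O(\delta)$) control of $\|w^1_\delta\|_{L^2(\lay)}$ from the global operator identity, which is enough to trigger the proposition and thereby upgrade both to the sharp $H^2$‑bound and to the $\O(\delta)$ decay in $L^2$.
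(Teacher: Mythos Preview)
Your proof is correct and follows essentially the same approach as the paper: obtain a $\delta$-uniform bound on $\|u^1_\delta\|_{W_\delta}$ from the identity $(A_{\ld^1_\delta}u^1_\delta,u^1_\delta)_{W_\delta}=2(\ld^1_\delta)^2$ and the coercivity of $A_k$, convert this into crude $L^2$-bounds on $w^1_\delta$ and $v^1_\delta$ separately, and then invoke Proposition~\ref{prop:generalbound} with $s=0$. Your explicit derivation of $\|w^1_\delta\|_{L^2(\lay)}\le C$ via $(1-n)w^1_\delta=(w^1_\delta-v^1_\delta)-(nw^1_\delta-v^1_\delta)$ spells out what the paper dismisses as ``a straightforward consequence'', and your remark on the apparent circularity is an accurate diagnosis of why the two-step structure is needed.
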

\begin{proof}
We show that Proposition \ref{prop:generalbound} applies to $(v_\delta^1,w_\delta^1)$ for $s=0$. To this end, since 
\begin{equation*}
2(\ld_{\delta}^{1})^{2} = (A_{\ld^{1}_{\delta}} u_{\delta}^{1},u_{\delta}^{1})_{W_{\delta}} \\
\end{equation*}
and since $A_{\ld^{1}_{\delta}}$ is coercive with a coercivity constant independent of $\delta$ (see Proposition \ref{prop:Acoercive}), there exists a constant $C$ 
independent of $\delta$ such that 
\begin{equation}
\label{eq:unifborneudelta}
\| u_{\delta}^{1}\|_{H^{1}(\Om)} + \|\Delta  u_{\delta}^{1}\|_{L^{2}(\lay)} \leq C
\end{equation}
where $u_\delta^1$ is defined by (\ref{eq:defu}).
A straightforward consequence of (\ref{eq:unifborneudelta}) is that there exists another constant $C$ still independent of $\delta$ such that 
\begin{equation*}
\|w_{\delta}^{1}\|_{L^{2}(\lay)}\leq C \quad \text{and} \quad \|v_{\delta}^{1}\|_{L^{2}(\Om)}\leq C.
\end{equation*}
Since $-\Delta w_\delta^1 = \ld_\delta^1n w_\delta^1$ and $-\Delta v_\delta^1 = \ld_\delta^1 v_\delta^1$, by using Proposition \ref{prop:generalbound} with $s=0$ 
we have that there exists $C>0$ such that
\[
 \|w_{\delta}^{1}\|_{H^{2}(\lay)} \leq C \quad \text{and} \quad  \|w_{\delta}^{1}\|_{L^{2}(\lay)} \leq  \O(\delta).
\]
which ends the proof.
\end{proof}

\begin{lemma}
\label{prop:ev1}
The following error estimates hold true for sufficiently small  $\delta>0$
\begin{equation}\label{eq:ev1}
\|e_v^1\|_{H^{1}(\Om)} \leq \O( \delta),
\end{equation}
and 
\begin{equation}
\label{eq:ew1}
\|e_w^1\|_{H^{1}(\Om)} \leq  \O( \delta) \ , \quad \|e_w^1\|_{L^2(\Om)} \leq  \O( \delta^{3/2}), \quad \|w^1_\delta\|_{L^2(\Om)} \leq  \O( \delta^{3/2}).
\end{equation}
\end{lemma}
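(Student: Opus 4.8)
The plan is to prove the four estimates by a short bootstrap that alternates an elliptic estimate for $e^1_v$ in $\Om$ with the thin-layer a priori estimates of Propositions \ref{prop:generalbound}--\ref{prop:wH1} for $e^1_w$. First I would record the relations satisfied by the two errors (both of which lie in $H^2$, by Lemma \ref{LeEllReg} for $w^1_\delta,v^1_\delta$ and because $\hat w_1$ is polynomial in $\xi$ with smooth coefficients). Using \eqref{eq1}--\eqref{eq2}, the transmission conditions in \eqref{eq:ITEP}, equations \eqref{EqFirstD}--\eqref{Eqv1} and the explicit form of $\hat w_1$, a short computation gives the \emph{exact} identities on $\Ga$
\[
\D{e^1_v}{\ni}=\D{e^1_w}{\ni},\qquad e^1_v-e^1_w=-\delta\,\D{v_0}{\ni},
\]
and, in the interior,
\[
\Delta e^1_v+\ld^1_\delta e^1_v=-(\ld^1_\delta-\ld_0)\,v_0\ \text{in}\ \Om,\qquad
\Delta e^1_w=-\ld^1_\delta n\,w^1_\delta-\delta\,\Delta w_1\ \text{in}\ \lay .
\]
By Theorem \ref{ThConvEigenvalue1} the right-hand side of the first equation is $\O(\delta)$ in $L^2(\Om)$; and a direct computation of $\Delta w_1$ in the local coordinates of Section \ref{prelim} shows that the only part of $\delta\,\Delta w_1$ that is not pointwise $\O(\delta)$ is $\kappa\,\D{v_0}{\ni}/(1+\eta\kappa)$, which is bounded, so with $|\lay|=\O(\delta)$ and Lemma \ref{le:wdelta1} one obtains $\|\Delta e^1_w\|_{L^2(\lay)}=\O(\delta^{1/2})$.

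The heart of the argument is an elliptic estimate of the form $\|e^1_v\|_{H^1(\Om)}\le C(\|w^1_\delta\|_{H^{1/2}(\Ga)}+\delta)$. To prove it I split $v^1_\delta=\mu_\delta v_0+(v^1_\delta)^\perp$, $\mu_\delta=(v^1_\delta,v_0)_{L^2(\Om)}$, $(v^1_\delta)^\perp\perp v_0$; then $(v^1_\delta)^\perp$ has boundary trace $w^1_\delta$ on $\Ga$ and solves $(\Delta+\ld^1_\delta)(v^1_\delta)^\perp=-\mu_\delta(\ld^1_\delta-\ld_0)v_0$. Since $\ld_0$ is simple and $\ld^1_\delta\to\ld_0$, the operator $\Delta+\ld^1_\delta$ is invertible on $H^1_0(\Om)\cap\{v_0\}^\perp$ with a bound uniform in $\delta$, the solvability condition — which is precisely Green's identity $(\ld^1_\delta-\ld_0)\mu_\delta=\int_\Ga w^1_\delta\,\D{v_0}{\ni}\,ds$ — being automatically satisfied; lifting the boundary data then gives $\|(v^1_\delta)^\perp\|_{H^1(\Om)}\le C(\|w^1_\delta\|_{H^{1/2}(\Ga)}+\delta)$. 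On the other hand, the normalization $\|u^1_\delta\|_{L^2(\Om)}=1$, together with \eqref{eq:defu}, Lemma \ref{le:wdelta1} and the thin-layer estimates, gives $\|v^1_\delta\|^2_{L^2(\Om)}=1+\O(\delta)$, hence $\mu_\delta^2=1-\|(v^1_\delta)^\perp\|^2_{L^2(\Om)}+\O(\delta)$; choosing the sign of $v_0$ so that $\mu_\delta>0$ yields $\mu_\delta=1+\O(\|w^1_\delta\|^2_{H^{1/2}(\Ga)}+\delta)$, and the estimate for $e^1_v=(\mu_\delta-1)v_0+(v^1_\delta)^\perp$ follows.

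Then I close the loop. At the first pass, Lemma \ref{le:wdelta1} and the $\delta$-explicit trace Lemma \ref{Le:Trace} (exploiting $w^1_\delta=0$ on $\Ga_\delta$) give only $\|w^1_\delta\|_{H^{1/2}(\Ga)}=\O(\delta^{1/2})$, so the estimate above yields $\|e^1_v\|_{H^1(\Om)}=\O(\delta^{1/2})$. With this, Proposition \ref{prop:generalbound} applies to $(e^1_v,e^1_w)$ with $s=1/2$: $\|\Delta e^1_w\|_{L^2(\lay)}=\O(\delta^{1/2})$, $e^1_w=0$ on $\Ga_\delta$, $\|\Delta e^1_v\|_{L^2(\Om)}\le\ld^1_\delta\|e^1_v\|_{L^2(\Om)}+\O(\delta)=\O(\delta^{1/2})$, the Neumann jump vanishes and the Dirichlet jump is $\O(\delta)$; hence $\|e^1_w\|_{H^2(\lay)}=\O(\delta^{1/2})$ and $\|e^1_w\|_{L^2(\lay)}=\O(\delta^{3/2})$. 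Because $e^1_w$ and $\partial_s e^1_w$ vanish on $\Ga_\delta$, the one-dimensional fundamental-theorem-of-calculus trace estimate across the layer improves to $\|e^1_w\|_{H^{1/2}(\Ga)}\le C\delta^{1/2}\|e^1_w\|_{H^2(\lay)}=\O(\delta)$, and since $\delta w_1|_\Ga=-\delta\,\D{v_0}{\ni}$ is $\O(\delta)$ in $H^{1/2}(\Ga)$ we get $\|w^1_\delta\|_{H^{1/2}(\Ga)}=\O(\delta)$. A second application of the elliptic estimate then gives $\|e^1_v\|_{H^1(\Om)}=\O(\delta)$, which is \eqref{eq:ev1}. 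Finally, Proposition \ref{prop:wH1} applies to $e^1_w$ with $s=1/2$ — its hypothesis $\|\D{e^1_w}{\ni}\|_{H^{-1/2}(\Ga)}=\|\D{e^1_v}{\ni}\|_{H^{-1/2}(\Ga)}\le C(\|e^1_v\|_{H^1(\Om)}+\|\Delta e^1_v\|_{L^2(\Om)})=\O(\delta)$ now holding — yielding $\|e^1_w\|_{H^1(\lay)}=\O(\delta)$ and $\|e^1_w\|_{L^2(\lay)}=\O(\delta^{3/2})$; extending by zero gives the $H^1(\Om)$ and $L^2(\Om)$ bounds on $e^1_w$, and $\|w^1_\delta\|_{L^2(\Om)}\le\|e^1_w\|_{L^2(\lay)}+\delta\|w_1\|_{L^2(\lay)}=\O(\delta^{3/2})$.

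The main obstacle is the elliptic estimate for $e^1_v$: since $\ld^1_\delta$ lies within $\O(\delta)$ of the simple Dirichlet eigenvalue $\ld_0$, the Helmholtz--Dirichlet problem for $e^1_v$ is nearly resonant and no direct elliptic estimate is available; one must peel off the $v_0$-component, use the simplicity of $\ld_0$ to invert on the orthogonal complement with a bound independent of $\delta$, and pin down the scalar $\mu_\delta$ through the normalization and a sign choice. Compounding this, the layer corrector $\delta w_1$ is only first-order accurate, so $\Delta e^1_w$ is merely $\O(\delta^{1/2})$ in $L^2(\lay)$; this forces $s=1/2$ in Propositions \ref{prop:generalbound}--\ref{prop:wH1}, so that a single pass does not suffice and the bootstrap above — whose gain rests on the sharp $\delta^{1/2}$-weighted trace estimate for functions vanishing on $\Ga_\delta$ — is genuinely needed.
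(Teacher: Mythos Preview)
Your proposal is correct and follows essentially the same bootstrap as the paper: a first crude trace bound $\|w^1_\delta\|_{H^{1/2}(\Ga)}=\O(\delta^{1/2})$ yields $\|e^1_v\|_{H^1(\Om)}=\O(\delta^{1/2})$, Proposition~\ref{prop:generalbound} with $s=1/2$ then gives $\|e^1_w\|_{H^2(\lay)}=\O(\delta^{1/2})$, the sharp trace estimate \eqref{eq:TraceBord} improves the boundary bound to $\O(\delta)$, a second pass gives \eqref{eq:ev1}, and Proposition~\ref{prop:wH1} closes \eqref{eq:ew1}. The one genuine difference is in the ``elliptic estimate'' step: the paper lifts $w^1_\delta|_\Ga$, shows that $\bar v^1_\delta:=v^1_\delta-\theta_{w^1_\delta}\in H^1_0(\Om)$ is an approximate Dirichlet eigenfunction, and invokes the abstract perturbation Lemma~\ref{Le:ConvEigen} (itself based on Theorem~\ref{th:ConvEigen}) to conclude $\|\bar v^1_\delta-v_0\|_{H^1(\Om)}$ is small; you instead perform the spectral decomposition $v^1_\delta=\mu_\delta v_0+(v^1_\delta)^\perp$ by hand, invert $\Delta+\ld^1_\delta$ uniformly on $\{v_0\}^\perp$, and recover $\mu_\delta$ from the normalization and a sign choice. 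The two arguments are equivalent in content---your decomposition is precisely what underlies Lemma~\ref{Le:ConvEigen}---but your version is more self-contained, while the paper's packaging into a lemma pays off when the same step is repeated at order two.
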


 \begin{proof}
 The idea of the proof is to establish that $v_\delta^{1}$ is a quasi Dirichlet eigenfunction for $-\Delta$ in the domain $\Om$ and then apply Lemma 
\ref{Le:ConvEigen}. From the first estimate of Lemma \ref{le:wdelta1} and  the inequality \eqref{eq:TraceBord} the trace of $w_\delta^1$ on $\Ga$ 
satisfies
 \[
 \|w_{\delta}^{1}\|_{H^{1/2}(\Ga)} \leq \O(\delta^{1/2}).
 \]
Let us define $\theta_{w_{\delta}^{1}}$  a lifting of $w_{\delta}^{1}$ in $H^{1}(\Om)$ such that $\theta_{w_{\delta}^{1}}|_{\Ga}=w^{1}_{\delta}|_{\Ga}$ and  that 
$\|\theta_{w_{\delta}^{1}}\|_{H^{1}(\Om)} \leq \O(\delta^{1/2})$. 
Let us set $\b{v}_{\delta}^{1} := v_{\delta}^{1}-\theta_{w_{\delta}^{1}}$ and show that $\b{v}_{\delta}^{1}$ is close to $v_{0}$. Indeed, for all $\psi \in H^{1}_{0}(\Om)
$ we have that 
\begin{align*}
\left|\int_{\Om} \nabla\b{v}_{\delta}^{1}\cdot \nabla \psi - \ld_0\b{v}_{\delta}^{1}\psi \, dx\right|&= \left|(\ld^{1}_{\delta}-\ld_0) \int_{\Om}v_{\delta}^{1}\psi  \,dx - 
\int_{\Om} ( \nabla\theta_{w_{\delta}^{1}}\cdot \nabla \psi- \ld_0\theta_{w_{\delta}^{1}}\psi ) \, dx \right| \\
&\leq \O(\delta^{1/2}) \|\psi\|_{H^{1}(\Om)}.
\end{align*}
Moreover since $\|u_\delta^1\|_{L^2(\Om)} =1$,  Lemma \ref{le:wdelta1} implies that 
\begin{equation}
\label{eq:Cdelta10}
|\|\b{v}_{\delta}^{1}\|_{L^2(\Om)}-1|\leq\O(\delta^{1/2})
\end{equation}
and hence by virtue of Lemma \ref{Le:ConvEigen} there exists $C>0$  and $v_0$ solution to (\ref{EqFirstD}) with $\|v_0\|_{L^2(\Om)}=1$ such that for all $\delta$ 
sufficiently small we have
\begin{equation}\label{eqq}
\|\b{v}_{\delta}^{1} - v_{0}\|_{H^{1}(\Om)} \leq \O(\delta^{1/2}).
\end{equation}
Note that this last inequality uniquely determines the function $v_0$. From the definition of $\b{v}_{\delta}^{1}$ and the above  bound on the lifting,  (\ref{eqq}) 
takes the form 
\begin{equation}
\label{eq:vv0}
\|e_v^1\|_{H^{1}(\Om)}\leq \O(\delta^{1/2}).
\end{equation}
But since $w_{1}$ solves \eqref{eq:volw} we have
\[
\Delta w_1  = \frac{1}{(1+\delta\xi \kappa)^3}\left(\sum_{k=0}^5 \delta^{k-2} A_k\hat{w}_1 - \ld_\delta^1 n \hat{w}_1\right)
\]
whence
\begin{equation*}
\|\Delta w_1\|_{L^2(\lay)} \leq C( \delta^{-1/2} \|A_1\hat{w}_1\|_{L^2(\mathcal{G})} + \delta^{1/2}\|\hat{w}_1\|_{L^2(\mathcal{G})} \leq C\delta^{-1/2}.
\end{equation*}
Therefore
\begin{equation}
\label{eq:deltaew1}
\|\Delta e_w^1\|_{L^2(\lay)} \leq \O(\delta^{1/2}), 
\end{equation}
and in addition we also  have by \eqref{eq:vv0}
\begin{equation}
\label{eq:deltaev1}
\|\Delta e_v^1\|_{L^2(\Om)} \leq\O(\delta^{1/2}) \ , \quad \left\| \D{e_v^1}{\ni} - \D{e_w^1}{\ni} \right\|_{H^{1/2}(\Ga)} = 0 
\end{equation}
and
\[
\left\| e_v^1 -e_w^1  \right\|_{H^{3/2}(\Ga)}=\left\| \delta w_1  \right\|_{H^{3/2}(\Ga)}\leq \O(\delta).
\]
An application of Proposition \ref{prop:generalbound} now implies
\[
\| e_w^1\|_{H^2(\lay)} \leq  \O(\delta^{1/2})
\]
and then thanks to   \eqref{eq:TraceBord} applied  $e_w^1$, we can improve the bound on $w_\delta^1$ as follows
\[
 \|w_{\delta}^{1}\|_{H^{1/2}(\Ga)} \leq \O(\delta)
\]
since $\|w_1\|_{H^{1/2}(\Ga)}\leq C$ for all $\delta>0$. Now repeating the previous steps of the proof allows us  to  improve the bound on $e_v^1$ as follows
\begin{equation*}
\|e_v^1\|_{H^{1}(\Om)}\leq\O(\delta).
\end{equation*}
Finally, this last inequality together with \eqref{eq:deltaew1} and the fact that 
\[
 \D{e_v^1}{\ni}= \D{e_w^1}{\ni} \text{ on } \Ga
\]
yield the desired bounds for $e_w^1$ thanks to Proposition \ref{prop:wH1}.
\end{proof}

As a consequence of the error estimates derived in Lemma \ref{prop:ev1} we can now obtain  the desired first order convergence result which is  stated in the 
theorem below.
\begin{theorem}
\label{co:eigenvalue2}
The following asymptotic expansion for the first transmission eigenvalue
$\ld_{\delta}^{1}$ holds true for sufficiently small  $\delta>0$,
\[
\ld_{\delta}^{1} = \ld_0+\delta\ld_1+\O(\delta^2),
\]
where $\lambda_0$ is the first Dirichlet eigenvalue for $-\Delta$ in $\Omega$ and $\lambda_1$ is defined by (\ref{defl1}).
\end{theorem}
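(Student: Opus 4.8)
The plan is to use the Max-Min characterization of $\ld^1_\delta$ exactly as in the proof of Theorem \ref{ThConvEigenvalue1}, but now with a sharper trial function built from the first-order ansatz, and to combine it with a matching lower bound. Recall that
\[
2(\ld_{\delta}^{1})^{2} = \inf_{\substack{u\in W_\delta \\ \|u\|_{L^{2}(\Om)}=1}} \int_{\lay}\frac{1}{1-n} |\Delta u+\ld_{\delta}^{1} u|^{2} \,dx +(\ld_{\delta}^{1})^{2} + \ld_{\delta}^{1} \int_{\Om} |\nabla u|^{2}\,dx,
\]
so dividing by $\ld^1_\delta$ (which is bounded above and below by Theorem \ref{ThConvEigenvalue1}) gives
\[
\ld^1_\delta = \inf_{\substack{u\in W_\delta \\ \|u\|_{L^2(\Om)}=1}} \Big( \int_\Om |\nabla u|^2\,dx + \tfrac{1}{\ld^1_\delta}\int_{\lay}\tfrac{1}{1-n}|\Delta u + \ld^1_\delta u|^2\,dx \Big).
\]
The strategy is: (i) plug in a well-chosen $u = u_{\mathrm{trial}}$ constructed from $v_0$ and $w_1$, suitably corrected inside $\Om_\delta$ and renormalized in $L^2(\Om)$, to get $\ld^1_\delta \le \ld_0 + \delta\ld_1 + \O(\delta^2)$; (ii) obtain the matching lower bound $\ld^1_\delta \ge \ld_0 + \delta\ld_1 - \O(\delta^2)$ by testing the eigenfunction $u^1_\delta$ against itself in the Max-Min functional and expanding, using the error estimates of Lemma \ref{prop:ev1} to control all remainder terms.

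For the lower bound I would argue as follows. Take $u = u^1_\delta$ (normalized so $\|u^1_\delta\|_{L^2(\Om)}=1$) in the displayed functional: since this $u$ realizes the infimum, we have an identity, not just an inequality, namely $\ld^1_\delta = \int_\Om |\nabla u^1_\delta|^2 + (\text{layer term})$. By Lemma \ref{prop:ev1}, $u^1_\delta = v^1_\delta$ in $\Om_\delta$ and $v^1_\delta = v_0 + e^1_v$ with $\|e^1_v\|_{H^1(\Om)} \le \O(\delta)$, while in the layer $u^1_\delta = w^1_\delta - v^1_\delta$ with $w^1_\delta = \delta w_1 + e^1_w$, $\|e^1_w\|_{H^1(\Om)} \le \O(\delta)$, $\|w^1_\delta\|_{L^2(\Om)} \le \O(\delta^{3/2})$. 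Expanding $\int_\Om|\nabla u^1_\delta|^2$ around $\int_\Om|\nabla v_0|^2 = \ld_0$ and carefully accounting for the boundary term coming from integration by parts over the thin layer $\lay$ of width $\delta$ (where the relevant contribution is $-\delta\int_\Ga g|\partial_\ni v_0|^2\,ds = -\delta\ld_1$ by the definition \eqref{defl1}, with $g\equiv1$) should produce $\ld^1_\delta \ge \ld_0 + \delta\ld_1 - \O(\delta^2)$; the layer term $\tfrac{1}{\ld^1_\delta}\int_{\lay}\tfrac{1}{1-n}|\Delta u^1_\delta + \ld^1_\delta u^1_\delta|^2$ is $\le \O(\delta^2)$ because $\Delta w^1_\delta + \ld^1_\delta w^1_\delta = 0$ only with the $n$-weight — actually $-\Delta w^1_\delta = \ld^1_\delta n w^1_\delta$, so $\Delta u^1_\delta + \ld^1_\delta u^1_\delta = (1-n)\ld^1_\delta w^1_\delta$ in $\lay$, giving the layer term $= \ld^1_\delta \int_{\lay}(1-n)|w^1_\delta|^2 \le C\|w^1_\delta\|_{L^2(\lay)}^2 \le \O(\delta^2)$ by Lemma \ref{prop:ev1}. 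For the upper bound I would take $u_{\mathrm{trial}}$ to agree with $-(v_0 + \delta v_1)$ on $\Om_\delta$, with $w_1$ providing the layer piece so that the $W_\delta$-constraint $\partial_\ni u = 0$ on $\Ga$ and vanishing on $\Ga_\delta$ are met up to $\O(\delta)$ corrections, renormalize in $L^2(\Om)$, and expand the functional; by construction of $\ld_1$ via the compatibility/orthogonality condition, the $\O(\delta)$ term of the expansion is exactly $\ld_1$ and the remainder is $\O(\delta^2)$.

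The main obstacle I expect is bookkeeping the boundary contributions from integrating by parts over the layer $\lay$ with explicit $\delta$-dependence: one must show $\int_{\lay}|\nabla u^1_\delta|^2 = \int_\Om|\nabla v_0|^2 - \ld_0\int_{\lay}|v_0|^2 + \delta\,(\text{something}) + \O(\delta^2)$ and identify the $\O(\delta)$ coefficient with $\ld_1$ — this requires the local-coordinate Jacobian expansion $J_{s,\eta} = 1+\eta\kappa$ from Section \ref{prelim}, the trace estimates \eqref{eq:TraceVol}, \eqref{eq:TraceBord}, and the precise form of $\hat w_1$. A secondary subtlety is that the lower bound argument gives an identity for $\ld^1_\delta$ in terms of $u^1_\delta$, but $u^1_\delta$ itself depends on $\delta$; one circumvents this by using only the already-established $H^1$ and $L^2$ error bounds of Lemma \ref{prop:ev1} rather than any finer information. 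Once both bounds are in hand, squeezing gives \eqref{main-asymp} at first order and completes the proof.
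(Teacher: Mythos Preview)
Your ``lower bound'' half is essentially the paper's proof: since $u^1_\delta$ realizes the infimum, one has the \emph{identity}
\[
\ld^1_\delta = \ld^1_\delta\int_{\lay}(1-n)|w^1_\delta|^2\,dx + \int_\Om |\nabla u^1_\delta|^2\,dx,
\]
and the whole task is to expand the right-hand side using Lemma~\ref{prop:ev1}. The separate upper bound via a trial function is therefore unnecessary (and in fact problematic, since your sketch only has $u_{\mathrm{trial}}$ satisfying the $W_\delta$ constraints ``up to $\O(\delta)$ corrections'', which is not membership in $W_\delta$).

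There are, however, two genuine gaps in your expansion of $\int_\Om|\nabla u^1_\delta|^2$. First, the $\delta\ld_1$ contribution does \emph{not} come from ``the boundary term from integration by parts over the thin layer'' with value $-\delta\ld_1$; in the decomposition $u^1_\delta = e^1_w - e^1_v - v_0 + \delta w_1$ it comes entirely from the square term
\[
\delta^2\int_{\lay}|\nabla w_1|^2\,dx
= \int_0^{s_0}\!\!\left|\D{v_0}{\ni}\right|^2\!\int_0^\delta J_{s,\eta}\,d\eta\,ds + \O(\delta^2)
= \delta\ld_1 + \O(\delta^2),
\]
using that $\partial_\xi\hat w_1 = \partial_\ni\wt v_0$ and the Jacobian $J_{s,\eta}=1+\eta\kappa$.

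Second, and more seriously, you are missing the mechanism that kills the cross term $-2\int_\Om \nabla v_0\cdot\nabla(u^1_\delta+v_0)\,dx$. A priori this is only $\O(\delta)$ since $\|u^1_\delta+v_0\|_{H^1(\Om)}\le\O(\delta)$, which would destroy the $\O(\delta^2)$ remainder. The paper integrates by parts (using $v_0\in H^1_0(\Om)$ and $-\Delta v_0=\ld_0 v_0$) to turn it into $-2\ld_0\int_\Om v_0(u^1_\delta+v_0)\,dx$, and then exploits the two normalizations $\|u^1_\delta\|_{L^2(\Om)}=\|v_0\|_{L^2(\Om)}=1$: expanding $1=\|u^1_\delta\|^2_{L^2}=\|(u^1_\delta+v_0)-v_0\|^2_{L^2}$ and using $\|u^1_\delta+v_0\|_{L^2(\Om)}\le\O(\delta)$ gives $\big|\int_\Om v_0(u^1_\delta+v_0)\,dx\big|\le\O(\delta^2)$. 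Without this normalization trick your expansion cannot close at the required order.
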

 
\begin{proof}
Let  $u_{\delta}^{1}\in W_\delta$ be  defined  by (\ref{eq:defu}) with $v_\delta^1$ and $w_\delta^1$ normalized such that $\|u^1_{\delta}\|_{L^{2}(\Om)}=1$. Then 
from \cite{CaCoHa12} we have that
\[
(\ld_{\delta}^{1})^2 =\int_{\lay}\frac{1}{1-n} |\Delta u_{\delta}^{1}+\ld_{\delta}^{1} u_{\delta}^{1}|^{2} \,dx + \ld_{\delta}^{1} \int_{\Om} |\nabla u_{\delta}^{1}|^{2}\,dx.
\]
and using the definition (\ref{eq:defu}) of $u_{\delta}^{1}$ and the equations for $v_{\delta}^{1}$  this becomes
\begin{equation}
 \label{eq:charald}
\ld_{\delta}^{1} =\int_{\lay}\ld_{1}^{\delta}(1-n) |w_{\delta}^{1}|^{2} \,dx +  \int_{\Om} |\nabla u_{\delta}^{1}|^{2}\,dx.
\end{equation}
From (\ref{eq:ew1}) the first term in (\ref{eq:charald}) is of order $\delta^{3}$, hence we need to develop only the 
second term. To this end we can write
\begin{align}
\int_{\Om} |\nabla u_{\delta}^{1}|^{2}\,dx &= \int_{\Om} |\nabla (e_w^1- e_v^1  - v_{0} + \delta w_{1})|^{2}\,dx\label{udest}\\
&=\int_{\Om}  |\nabla e_w^{1}|^{2} \, dx + \int_{\Om}  |\nabla e_v^{1}|^{2} \, dx + \int_{\Om} |\nabla v_{0}|^{2} \,dx+ \delta^{2} \int_{\lay} |\nabla w_{1}|^{2} \, dx\nonumber 
\\&-2   \int_{\Om} \nabla v_{0} \nabla (w_\delta^1-e_v^{1} ) \, dx + 2 \delta  \int_{\Om} \nabla (e_w^{1}-e_v^1) \nabla w_{1} \, dx -2 \int_{\Om} \nabla e_v^{1} \nabla 
e_w^1 \, dx.\nonumber
\end{align}
Recall that from Lemma  \ref{prop:ev1}  we have that 
\[
 \int_{\Om}  |\nabla e_v^{1}|^{2} \, dx \leq \O(\delta^2), \quad  \int_{\Om}  |\nabla e_w^{1}|^{2} \, dx \leq \O(\delta^2) \text{ and } \int_{\Om} \nabla e_v^{1} \nabla 
e_w^1 \, dx \leq \O(\delta^2).
\]
Furthermore, from the definition of $v_0$ we have that
\[
\int_{\Om} |\nabla v_{0}|^{2} \, dx = \ld_0,
\]
and from the definition of $w_1$ we have that 
\[
\delta^{2} \int_{\lay} |\nabla w_{1}|^{2} \, dx = \delta^{2} \int_{0}^{s_0}\left(\left|\D{v_{0}}{\ni}\right|^{2} \int_{0}^{\delta} \cfrac{1}{\delta^{2}} J_{s,\eta} \, d\eta\right) ds + 
\O(\delta^{2}) = \delta\ld_1 + \O(\delta^{2}).
\]
In addition we also know that $\|u_{\delta}^{1}\|_{L^{2}(\Om)}=1$ that is
\[
1 = \int_{\Om} |u_{\delta}^{1}|^{2} \, dx = \int_{\Om} |u_{\delta}^{1}+v_{0}|^{2} - 2(u_{\delta}^{1} + v_{0}) v_{0} +| v_{0}|^{2 }\, dx.
\]
The estimates of Lemma  \ref{prop:ev1} together with the definitions (\ref{eq1}) and (\ref{eq2}) give that 
$$\|u_{\delta}^{1}+v_{0}\|_{L^2(\Om)}\leq \O(\delta)$$ and hence since $\|v_{0}\|_{L^{2}(\Om)}=1$ we have
\begin{equation}
\label{EqNorm1}
\left|\int_{\Om} v_{0}(u_{\delta}^1+v_{0}) \, dx  \right| \leq \O(\delta^2).
\end{equation}
Recalling that $v_{0}$ is a Dirichlet eigenfunction for $-\Delta$ with corresponding eigenvalue $\lambda_0$,  from \eqref{EqNorm1} we now obtain 
\begin{align*}
\left| \int_{\Om} \nabla v_{0} \nabla (w_\delta^{1}-e_v^1) \, dx \right|&=\left| \int_{\Om} \nabla v_{0} \nabla (u_{\delta} + v_{0}) \, dx  \right| \\
&= \ld_0\left| \int_{\Om} v_{0} (u_{\delta} + v_{0}) \, dx \right| \leq \O(\delta^2).
\end{align*}
Finally, noting that 
\[
\int_{\lay}\nabla (e_w^{1}- e_v^1) \nabla w_{1} \, dx  = \int_{\lay}-\Delta (e_w^{1}-e_v^1) w_{1} \, dx - \int_{\Ga} \D{}{\ni}(e_w^1-e_v^1) \, w_{1} \, ds
\]
and using $ \D{e_w^1}{\ni} = \D{e_v^1}{\ni}$ we obtain
\begin{equation}
\label{Eqe1}
\int_{\lay}\nabla (e_w^{1}- e_v^1) \nabla w_{1} \, dx  = \int_{\lay}-\Delta  (e_w^{1}- e_v^1) w_{1} \, dx.
\end{equation}
But,   (\ref{eq:deltaew1}) and \eqref{eq:deltaev1} imply 
\begin{align*}
\|\Delta (e_w^{1}- e_v^1)\|_{L^{2}(\lay)} &\leq C\delta^{1/2}.
\end{align*}
 This last estimate together with equality (\ref{Eqe1}) and $\|w_1\|_{L^2(\lay)} \leq C \delta^{1/2}$ gives
\[
 \int_{\lay} \nabla (e_w^1-e_v^{1}) \nabla w_{1} \, dx  \leq C\delta.
  \]
We have estimated all the terms in (\ref{udest}) and thus the expression  \eqref{eq:charald} for $\ld^{1}_{\delta}$ finally yields the estimate
\[
\ld_{\delta}^{1} = \ld_{0} +\delta \ld_{1} +\O(\delta^2).
\]
\end{proof}

\subsection{The convergence of the second order approximation}\label{order2}
The goal of this section is twofold. We complete the rigorous justification of the asymptotic expansion (\ref{main-asymp}), and present  a constructive procedure 
how to iteratively obtain the converges of any order in the asymptotic expansion of transmission eigenvalues. To this end, before proceeding with  the 
convergence of the eigenvalues we need to improve the rate of convergence of the corresponding eigenfunctions. This is possible by adding a correction term to 
the eigenfunctions  $v_\delta^1$ and $w_\delta^1$. Let us consider the following error functions
\[
e^2_w := w_\delta^1 -C_\delta^1(\delta w_1 +\delta^2w_2) \qquad\text{  in } \lay
\]
extended by $0$ in $\Om_\delta$ and
\[
e^2_v := v_\delta^1 -C_\delta^1(v_0 +\delta v_1) \qquad\text{  in } \Om
\]
where $w_2(x):=\hat{w}_2(s,\xi)$ and   $v_1$ are defined in Section \ref{sec:devas} and $C_\delta^1:=\|u_\delta^1+\delta v_1\|_{L^2(\Om)}$. As before, the error 
$e^2_w$ is continuous across the interface $\Ga_\delta$ since it vanishes on $\Ga_\delta$. We remark that since $\|u_\delta^1\|_{L^2(\Om)} =1$ 
we have that 
\begin{equation}
\label{eq:Cdelta1}
C_\delta^1 = 1+ \O(\delta).
\end{equation}
 We now proceed as in Lemma \ref{prop:ev1}  to to give a first estimate on $e_w^2$ which on its turn  provides a $H^1$ bound for $e_v^2$ and then iterate the 
procedure to obtain the optimal bounds for $e_w^2$ and $e_v^2$.
 \begin{lemma}
\label{le:ew2}
The following a priori estimates hold for $\delta >0$ sufficiently small:
\[
\|e^2_w\|_{H^2(\lay)} \leq \O(\delta) \quad  \text{ and } \quad \|e^2_w\|_{L^2(\lay)} \leq \O (\delta^2).
\]
\end{lemma}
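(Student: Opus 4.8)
The plan is to apply Proposition~\ref{prop:generalbound} to the pair $(e_v^2,e_w^2)$ with the exponent $s=1$; its two conclusions then read precisely as the two bounds claimed in the lemma. So the proof reduces to verifying the five hypotheses of that proposition, the crucial one being the control of $\Delta e_w^2$ inside the layer.

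I would start with $\Delta e_w^2$. Since $-\Delta w_\delta^1=\ld_\delta^1 n\,w_\delta^1$ we have $\Delta e_w^2=-\ld_\delta^1 n\,w_\delta^1-C_\delta^1\Delta(\delta w_1+\delta^2 w_2)$. The first term is $\O(\delta^{3/2})$ in $L^2(\lay)$ because $\|w_\delta^1\|_{L^2(\lay)}\le\O(\delta^{3/2})$ by Lemma~\ref{prop:ev1} (recall $w_\delta^1$ is extended by zero in $\Om_\delta$). For the second term I would pass to the local coordinates $(s,\xi)$: when the Laplacian is multiplied by $\delta^2(1+\xi\delta\kappa)^3$ it becomes $\sum_{k=0}^5\delta^k A_k^{\mathrm L}$, where $A_k^{\mathrm L}$ is the purely differential part of the operator $A_k$ of Section~\ref{sec:devas} (so $A_0^{\mathrm L}=\partial_\xi^2$, $A_1^{\mathrm L}=A_1$, and so on). Applied to $\delta\hat w_1+\delta^2\hat w_2$, the contributions of orders $\delta^1$ and $\delta^2$ vanish identically --- this is exactly the content of $\partial_\xi^2\hat w_1=0$ and $\partial_\xi^2\hat w_2+\kappa\,\partial_\xi\hat w_1=0$, the equations used in Section~\ref{sec:devas} to define $\hat w_1$ and $\hat w_2$ --- so the residual starts at order $\delta^3$, with coefficients that are smooth and bounded on $\mc{G}$. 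Dividing back by $\delta^2(1+\xi\delta\kappa)^3$ gives $\|\Delta(\delta w_1+\delta^2 w_2)\|_{L^\infty(\lay)}\le\O(\delta)$, hence $\le\O(\delta^{3/2})$ in $L^2(\lay)$ since $|\lay|\le C\delta$. Altogether $\|\Delta e_w^2\|_{L^2(\lay)}\le\O(\delta^{3/2})\le\O(\delta)$, and $e_w^2$ vanishes on $\Ga_\delta$ by construction.

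Next I would treat the three remaining hypotheses. For the interface conditions on $\Ga$: using the transmission relations $v_\delta^1=w_\delta^1$, $\D{v_\delta^1}{\ni}=\D{w_\delta^1}{\ni}$ on $\Ga$ together with \eqref{eq:BCw}--\eqref{eq:BCdw} (for $g\equiv1$ one has $w_1|_\Ga=v_1|_\Ga=-\D{v_0}{\ni}$ and $\D{(\delta w_1+\delta^2 w_2)}{\ni}=\D{(v_0+\delta v_1)}{\ni}$ on $\Ga$), one finds $\D{e_v^2}{\ni}-\D{e_w^2}{\ni}=0$ on $\Ga$ and $e_v^2-e_w^2=C_\delta^1\delta^2 w_2$ on $\Ga$, so $\|e_v^2-e_w^2\|_{H^{3/2}(\Ga)}\le\O(\delta^2)$. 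For $\Delta e_v^2$: writing $e_v^2=e_v^1+(1-C_\delta^1)v_0-C_\delta^1\delta v_1$ and using $\|e_v^1\|_{H^1(\Om)}\le\O(\delta)$ (Lemma~\ref{prop:ev1}), $C_\delta^1=1+\O(\delta)$ (see \eqref{eq:Cdelta1}) and $v_1\in H^2(\Om)$ with a $\delta$--independent bound, one gets $\|e_v^2\|_{H^1(\Om)}\le\O(\delta)$; then, using $-\Delta v_\delta^1=\ld_\delta^1 v_\delta^1$, $\Delta v_0+\ld_0v_0=0$ and $\Delta v_1+\ld_0v_1=-\ld_1v_0$, one computes
\[
\Delta e_v^2+\ld_0 e_v^2=(\ld_0-\ld_\delta^1)\,v_\delta^1+C_\delta^1\,\delta\ld_1 v_0 .
\]
Since $\ld_0-\ld_\delta^1=-\delta\ld_1+\O(\delta^2)$ by Theorem~\ref{co:eigenvalue2} and $v_\delta^1=v_0+e_v^1$ with $\|e_v^1\|_{L^2(\Om)}\le\O(\delta)$, the leading $\delta\ld_1 v_0$ contributions cancel and the right-hand side is $\O(\delta^2)$ in $L^2(\Om)$; combined with the $H^1$ bound this gives $\|\Delta e_v^2\|_{L^2(\Om)}\le\O(\delta)$.

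All five hypotheses of Proposition~\ref{prop:generalbound} now hold with $s=1$, and it yields $\|e_w^2\|_{H^2(\lay)}\le\O(\delta)$ and $\|e_w^2\|_{L^2(\lay)}\le\O(\delta^2)$, which is the statement. The step I expect to be the main obstacle is the local-coordinate computation of the second paragraph: one must check that the explicit correctors $\hat w_1,\hat w_2$ annihilate the residual of the layer equation up to order $\delta^3$ after the $\delta^2(1+\xi\delta\kappa)^3$ normalization; once that residual estimate is in hand, everything else is a routine assembly of results already proved together with elliptic bookkeeping.
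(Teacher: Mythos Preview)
Your proof is correct and follows essentially the same strategy as the paper: verify the hypotheses of Proposition~\ref{prop:generalbound} with $s=1$ for the pair $(e_v^2,e_w^2)$, the key step being the residual estimate $\|\Delta e_w^2\|_{L^2(\lay)}\le\O(\delta^{3/2})$ obtained from the cancellations $A_0\hat w_1=0$ and $A_0\hat w_2+A_1\hat w_1=0$ in local coordinates. Your treatment of $\Delta e_v^2$ is in fact more detailed than the paper's (you invoke Theorem~\ref{co:eigenvalue2} to exhibit the cancellation of the $\delta\ld_1 v_0$ terms and obtain $\Delta e_v^2+\ld_0 e_v^2=\O(\delta^2)$, whereas the paper only records the cruder $\|\Delta e_v^2\|_{L^2(\Om)}\le\O(\delta)$ needed here), but the argument and its ingredients are the same.
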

\begin{proof}
First of all, from the definition of $w_1$ and $w_2$ we have for $\delta >0$ sufficiently small
\begin{equation}
\label{eq:w1w2L2}
\|w_1\|_{L^2(\lay)} \leq \O(\delta^{1/2}),\quad\|w_2\|_{L^2(\lay)} \leq \O(\delta^{1/2}).
\end{equation}
Moreover, from Section \ref{sec:devas}
\[
\Delta(w_1 + \delta w_2) = \frac{1}{(1+\delta\xi \kappa)^3}\left(\sum_{k=0}^5 \delta^{k-2} A_k(\hat{w}_1 +\delta \hat{w}_2)- \ld_\delta^1 n (\hat{w}_1+\delta \hat{w}
_2)\right)
\]
and since  $A_0 \hat{w}_1 =0$ and $A_0 \hat{w}_2 + A_1 \hat{w}_1 =0$ we now have 
\[
\Delta(w_1 + \delta w_2)=  \frac{1}{(1+\delta\xi \kappa)^3}\left(A_1 \hat{w}_2- \ld_\delta^1n (\hat{w}_1+\delta \hat{w}_2) + \sum_{k=2}^5 \delta^{k-2} A_k(\hat{w}_1 
+\delta \hat{w}_2)\right)
\]
which yields in view of \eqref{eq:ew1} and \eqref{eq:w1w2L2}
\begin{equation}
\label{eq:deltaw2}
\|\Delta e_w^2\|_{L^2(\lay)} \leq \O(\delta^{3/2}).
\end{equation}
Next, 
\[
\|e_w^2 - e^2_v\|_{H^{3/2}(\Ga)} = \|C_\delta^1\delta^2 w_2\|_{H^{3/2}(\Ga)} \leq \O( \delta^{2}),
\]
\[
\D{e^2_w}{\ni} = \D{e^2_v}{\ni} \quad \text{ on } \Ga,
\]
and
\begin{equation*}
\|\Delta e_v^2\|_{L^2(\Om)} \leq \O(\delta)
\end{equation*}
whence by applying Lemma \ref{prop:generalbound} with $s=1$ we obtain the result.
\end{proof}

Similarly to the previous section we are now able to obtain convergence rates for the eigenfunctions.
\begin{lemma}
\label{prop:ev2}
The following error estimates hold for $\delta>0$ sufficiently small
\[
\|e^2_v\|_{H^1(\Om)} \leq C \delta^{3/2},
\]
and
\[
\|e_w^2\|_{H^{1}(\Om)} \leq  \O( \delta^{3/2}) \ , \quad \|e_w^2\|_{L^2(\Om)} \leq  \O( \delta^{5/2}).
\]
\end{lemma}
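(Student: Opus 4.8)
The plan is to argue exactly as in the proof of Lemma~\ref{prop:ev1}, but one order higher, the role of the pair $(\lambda_0,v_0)$ now being played by the approximate eigenpair $(\lambda_0+\delta\lambda_1,\;v_0+\delta v_1)$. Indeed, by \eqref{EqFirstD} and \eqref{Eqv1} (recall $g\equiv1$) one has $\Delta(v_0+\delta v_1)+\lambda_0(v_0+\delta v_1)=-\delta\lambda_1 v_0$ in $\Om$ and $(v_0+\delta v_1)|_\Ga=\delta v_1|_\Ga$; combined with Theorem~\ref{co:eigenvalue2}, which gives $\lambda_\delta^1=\lambda_0+\delta\lambda_1+\O(\delta^2)$, and with the first--order bounds of Lemma~\ref{prop:ev1} (notably $\|v_\delta^1-v_0\|_{H^1(\Om)}\le\O(\delta)$, $\|u_\delta^1+v_0\|_{L^2(\Om)}\le\O(\delta)$, and the thin--layer bound $\|w_\delta^1\|_{L^2(\lay)}\le\O(\delta^{3/2})$), this produces a quasi--eigenfunction with residual of size $\delta^{3/2}$. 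As a preliminary I would sharpen \eqref{eq:Cdelta1} to $C_\delta^1=1+\O(\delta^2)$: expanding $(C_\delta^1)^2=\|u_\delta^1\|_{L^2(\Om)}^2+2\delta(u_\delta^1,v_1)_{L^2(\Om)}+\delta^2\|v_1\|_{L^2(\Om)}^2$ and using $\|u_\delta^1\|_{L^2(\Om)}=1$, the orthogonality $(v_0,v_1)_{L^2(\Om)}=0$, and $\|u_\delta^1+v_0\|_{L^2(\Om)}\le\O(\delta)$ yields $(C_\delta^1)^2=1+\O(\delta^2)$.

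First I would reduce the problem to a quasi--eigenfunction estimate. By Lemma~\ref{le:ew2} we have $\|e^2_w\|_{H^2(\lay)}\le\O(\delta)$, so the $\delta$--explicit trace inequality gives $\|e^2_w\|_{H^{1/2}(\Ga)}\le\O(\delta^{3/2})$; since $v_1|_\Ga=w_1|_\Ga$ and $e^2_v-e^2_w=-C_\delta^1\delta^2 w_2$ on $\Ga$, the trace of $v_\delta^1-C_\delta^1\delta v_1$ on $\Ga$ is $\O(\delta^{3/2})$ in $H^{1/2}(\Ga)$. Let $\theta\in H^1(\Om)$ lift this trace with $\|\theta\|_{H^1(\Om)}\le\O(\delta^{3/2})$ and set $\bar v:=v_\delta^1-C_\delta^1\delta v_1-\theta\in H^1_0(\Om)$. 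Integrating by parts and using $\Delta v_\delta^1=-\lambda_\delta^1 v_\delta^1$ and $\Delta v_1=-\lambda_0 v_1-\lambda_1 v_0$, for every $\psi\in H^1_0(\Om)$,
\[
\int_\Om \nabla\bar v\cdot\nabla\psi-\lambda_0\bar v\psi\,dx = (\lambda_\delta^1-\lambda_0)\int_\Om v_\delta^1\psi\,dx - C_\delta^1\delta\lambda_1\int_\Om v_0\psi\,dx - \int_\Om (\nabla\theta\cdot\nabla\psi-\lambda_0\theta\psi)\,dx .
\]
Substituting $v_\delta^1=v_0+(v_\delta^1-v_0)$, $\lambda_\delta^1-\lambda_0=\delta\lambda_1+\O(\delta^2)$ and $C_\delta^1=1+\O(\delta^2)$, the first two terms combine into $\O(\delta^2)\|\psi\|_{L^2(\Om)}$, while the lifting term is $\O(\delta^{3/2})\|\psi\|_{H^1(\Om)}$; hence the right--hand side is $\O(\delta^{3/2})\|\psi\|_{H^1(\Om)}$.

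Next I would verify $\big|\,\|\bar v\|_{L^2(\Om)}-C_\delta^1\,\big|\le\O(\delta^{3/2})$, which is where the particular choice $C_\delta^1=\|u_\delta^1+\delta v_1\|_{L^2(\Om)}$ is exploited: on $\Om_\delta$ one has $v_\delta^1=-u_\delta^1$, so $\bar v=-(u_\delta^1+C_\delta^1\delta v_1)-\theta$ there, and using $\|u_\delta^1+\delta v_1\|_{L^2(\Om)}=C_\delta^1$, $C_\delta^1=1+\O(\delta^2)$, the thin--layer smallness $\|w_\delta^1\|_{L^2(\lay)}\le\O(\delta^{3/2})$, $\|v_\delta^1\|_{L^2(\lay)}\le\|v_0\|_{L^2(\lay)}+\|v_\delta^1-v_0\|_{L^2(\lay)}\le\O(\delta)$ (the $v_0$ term is $\O(\delta^{3/2})$ because $v_0$ vanishes on $\Ga$), $\|v_1\|_{L^2(\lay)}\le\O(\delta^{1/2})$, and $\|\theta\|_{L^2(\Om)}\le\O(\delta^{3/2})$, one gets $\|\bar v\|_{L^2(\Om)}^2=(C_\delta^1)^2+\O(\delta^{3/2})$. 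Then Lemma~\ref{Le:ConvEigen} applied to $\bar v/C_\delta^1$ produces a first Dirichlet eigenfunction $\phi$ with $\|\bar v/C_\delta^1-\phi\|_{H^1(\Om)}\le\O(\delta^{3/2})$; since $\lambda_0$ is simple and, by Lemma~\ref{prop:ev1}, $\bar v/C_\delta^1$ is already $\O(\delta)$--close to $v_0$, necessarily $\phi=v_0$, so $\|\bar v-C_\delta^1 v_0\|_{H^1(\Om)}\le\O(\delta^{3/2})$ and, since $e_v^2=\bar v+\theta-C_\delta^1 v_0$, the estimate $\|e^2_v\|_{H^1(\Om)}\le\O(\delta^{3/2})$ follows. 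For the bounds on $e^2_w$ I would first upgrade the Laplacian estimate: from $\Delta e_v^2=-\lambda_\delta^1 e_v^2+C_\delta^1\big[(\lambda_0-\lambda_\delta^1+\delta\lambda_1)v_0+\delta(\lambda_0-\lambda_\delta^1)v_1\big]$, together with $\lambda_0-\lambda_\delta^1+\delta\lambda_1=\O(\delta^2)$ and $\lambda_0-\lambda_\delta^1=\O(\delta)$, one gets $\|\Delta e^2_v\|_{L^2(\Om)}\le\O(\delta^{3/2})$. Combining this with $\|\Delta e^2_w\|_{L^2(\lay)}\le\O(\delta^{3/2})$ (this is \eqref{eq:deltaw2}, which holds precisely because $A_0\hat w_1=0$ and $A_0\hat w_2+A_1\hat w_1=0$), the identity $\partial_\ni e^2_v=\partial_\ni e^2_w$ on $\Ga$, and $\|e^2_v-e^2_w\|_{H^{3/2}(\Ga)}=\|C_\delta^1\delta^2 w_2\|_{H^{3/2}(\Ga)}\le\O(\delta^2)$, an application of Proposition~\ref{prop:generalbound} with $s=3/2$ (recall $e^2_w$ is extended by zero and vanishes on $\Ga_\delta$) gives $\|e^2_w\|_{H^2(\lay)}\le\O(\delta^{3/2})$ and $\|e^2_w\|_{L^2(\lay)}\le\O(\delta^{5/2})$, which are the remaining two estimates.

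I expect the delicate point to be the quasi--eigenfunction step, namely making \emph{both} the residual and the $L^2$--norm of $\bar v$ accurate to order $\delta^{3/2}$. This forces the sharpened normalisation $C_\delta^1=1+\O(\delta^2)$, the second--order comparison $\lambda_\delta^1=\lambda_0+\delta\lambda_1+\O(\delta^2)$ of Theorem~\ref{co:eigenvalue2}, and careful bookkeeping of the contributions of $v_\delta^1$, $v_0$ and $v_1$ on the thin layer $\lay$ (small because $v_0$ vanishes on $\Ga$) --- and it is exactly there that the explicit trace estimates of the Appendix are indispensable.
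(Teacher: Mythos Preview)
Your argument is correct and follows essentially the same strategy as the paper: reduce to a quasi--Dirichlet--eigenfunction via a lifting, invoke Lemma~\ref{Le:ConvEigen}, then bootstrap $\|\Delta e_v^2\|_{L^2(\Om)}$ and apply Proposition~\ref{prop:generalbound} with $s=3/2$. The only cosmetic difference is that the paper works with $\mathrm{v}_2:=(C_\delta^1)^{-1}(v_\delta^1-\delta v_1)$ rather than your $v_\delta^1-C_\delta^1\delta v_1$; since $C_\delta^1=1+\O(\delta^2)$ these agree up to $\O(\delta^3)$, and the paper's normalization step is slightly shorter because the global identity $v_\delta^1=-u_\delta^1+w_\delta^1$ (with $w_\delta^1$ extended by zero) gives $\|v_\delta^1-\delta v_1\|_{L^2(\Om)}=\|u_\delta^1+\delta v_1\|_{L^2(\Om)}+\O(\|w_\delta^1\|_{L^2(\Om)})=C_\delta^1+\O(\delta^{3/2})$ in one line, avoiding your split into $\Om_\delta$ and the layer.
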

\begin{proof}
We proceed exactly as in the proof of Proposition \ref{prop:ev1}.  To this end, let us define $\d{v}_2:= (C_\delta^1)^{-1}(v_\delta^1 - \delta v_1)$ and then since 
$C_\delta^1 = 1 + \O(\delta)$, we have that 
\begin{align*}
\d{v}_2|_\Ga &=(C_\delta^1)^{-1}( v^1_\delta|_\Ga - \delta v_1|_\Ga -\delta^2 v_2|_\Ga) + (C_\delta^1)^{-1}\delta^2v_2|_\Ga\\
&=(C_\delta^1)^{-1}( v^1_\delta|_\Ga - C^1_\delta( \delta v_1|_\Ga -\delta^2 v_2|_\Ga)) + \O(\delta^2) \\
& =(C_\delta^1)^{-1}e_w^2|_\Ga + \O(\delta^2).  
\end{align*}
Using (\ref{eq:TraceBord}) together with Lemma \ref{le:ew2} we see that
\begin{equation*}
\|\d{v}_2\|_{H^{1/2}(\Ga)} \leq \O(\delta^{1/2}) \| e_w^2\|_{H^2(\lay)} + \O(\delta^2) \leq \O(\delta^{3/2}).
\end{equation*}
Let us denote by $\theta_{\d{v}_2}$ a lifting of $\d{v}_2$ in $\Om$ such that $\theta_{\d{v}_2}|_\Ga = \d{v}_2|_\Ga$ and $\|\theta_{\d{v}_2}\|_{H^1(\Om)} \leq \O(
\delta^{3/2})$ then consider $\b{v}_2 := \d{v}_2 - \theta_{\d{v}_2}$. Obviously $\b{v}_2 \in H^1_0(\Om)$ and for all $
\psi \in H^1_0(\Om)$ we have
\begin{align}
\int_{\Om} \nabla\b{v}_2\cdot \nabla \psi- \ld_0\b{v}_2\psi \, dx&= \int_{\Om}\nabla \d{v}_2\cdot\nabla\psi - \ld_0\d{v}_2\psi  \,dx  \nonumber\\
&- \int_{\Om} \nabla\theta_{\d{v}_2}\cdot \nabla \psi+ \ld_0\theta_{\d{v}_2}\psi \, dx  \label{besi}.
\end{align}
We can estimate the second term easily by using the bound on the lifting and the Cauchy-Schwarz inequality
\[
\left| \int_{\Om} \nabla\theta_{\d{v}_2}\cdot \nabla \psi+ \ld_0\theta_{\d{v}_2}\psi \, dx  \right| \leq \O(\delta^{3/2}) \|\psi\|_{H^1(\Om)}.
\]
Next we consider the first term in (\ref{besi}) containing  $\d{v}_2$.  For all $\psi \in H^1_0(\Om)$  from Theorem \ref{co:eigenvalue2} and Lemma \ref{prop:ev1} we 
can write
\begin{align*}
\int_{\Om}\nabla (v_\delta^1 -\delta v_1)\cdot\nabla\psi \, dx &= \ld_\delta^1\int_\Om v_\delta^1 \psi \,dx - \delta\ld_0\int_\Om v_1 \psi \,dx-\delta \ld_1  \int_\Om v_0 
\psi \,dx \\
& \hspace*{-1cm}=( \ld_0 +\delta \ld_1)\int_\Om v_\delta^1 \psi \,dx - \delta\ld_0\int_\Om v_1 \psi \,dx-\delta \ld_1  \int_\Om v_0 \psi \,dx + \O(\delta^2) \|\psi\|
_{H^1(\Om)}\\
&\hspace*{-1cm} =\ld_0 \int_\Om (v_\delta^1-\delta v_1) \psi \,dx +\O(\delta^2) \|\psi\|_{H^1(\Om)}.
\end{align*}
Next by using \eqref{eq:Cdelta1} we  obtain that there exists another constant $C$ still independent of $\delta$ such that 
\begin{equation*}
\left|\int_{\Om}(\nabla \d{v}_2 \cdot\nabla \psi -\ld_0 \d{v}_2 \psi)\, dx \right| \leq C \delta^{2} \|\psi\|_{H^1(\Om)}
\end{equation*}
for all $\psi \in H^1_0(\Om)$. We can now apply Lemma \ref{Le:ConvEigen} to $\b{v}_2$  since by \eqref{eq:ew1} and the definition of $C^1_\delta$, we have
\[
\|\d{v}_2\|_{L^2(\Om)}=(C_\delta^1)^{-1} \|u_\delta^1+\delta v_1\|_{L^2(\Om)}+\O(\delta^{3/2}) = 1 +\O(\delta^{3/2})
\]
 to obtain 
\begin{equation*}
\left\|\b{v_2}-v_0\right\|_{H^1(\Om)} \leq \O(\delta^{3/2}).
\end{equation*}
From the bound on the lifting $\theta_{\d{v}_2}$  and \eqref{eq:Cdelta1} the latter becomes
\begin{equation*}
\|v_\delta^1 - C_\delta^1v_0 -C_\delta^1\delta v_1\|_{H^1(\Om)}\leq O(\delta^{3/2}).
\end{equation*}
Now it is clear that we can improve the bound on $e_w^2$ by applying Lemma \ref{prop:generalbound} with $s=3/2$ , since 
\begin{equation}
\label{eq:deltaev2}
\|\Delta e_v^2\|_{L^2(\Om)} \leq \O(\delta^{3/2}).
\end{equation}
Therefore we arrive at the following improved estimate 
\[
\|e_w^2\|_{H^2(\lay)} \leq \O(\delta^{3/2}), \qquad \|e_w^2\|_{L^2(\lay)} \leq  \O( \delta^{5/2}).
\]
\end{proof}
\begin{remark}
As in Lemma \ref{prop:ev1} we can improve the bound on $e_v^2$ if we choose 
\[
C_\delta^1 = \|u_\delta^1 +\delta v_1 -\delta w_1\|_{L^2(\Om)}.
\]
Indeed, in this case
\[
\|\b{v}_2\|_{L^2(\Om)} = 1 + \O(\delta^2)
\]
and since $\|e_w^2\|_{H^2(\lay)} \leq \O(\delta^{3/2})$ we deduce that $\|e_v^2\|_{H^{1/2}(\Ga)} \leq \O(\delta^{2})$ which implies that
\[
\|e_v^2\|_{H^1(\Om)} \leq \O(\delta^{2}).
\]
\end{remark}

The estimates obtained in Lemma \ref{prop:ev2}  lead to the following result.
\begin{theorem}
The following expansion for the first transmission eigenvalue holds true for
$\delta>0$ sufficiently small
 \[
\ld^1_\delta = \ld_0 + \delta \ld_1 +\delta^2\ld_2 + \O(\delta^{3}),
 \]
 where $\lambda_0$ is the first Dirichlet eigenvalue for $-\Delta$ in $\Omega$,
 and $\lambda_1$ and $\lambda_2$ are  defined by (\ref{defl1}) and
 (\ref{ldef2}) respectively.
\end{theorem}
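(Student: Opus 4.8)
The plan is to follow the template of the proof of Theorem~\ref{co:eigenvalue2}, now carrying the expansion one order further with the refined approximations of Section~\ref{order2}. As there, we normalise $\|u_\delta^1\|_{L^2(\Om)}=1$ and start from identity \eqref{eq:charald},
\[
\ld_\delta^1 \;=\; \int_{\lay}\ld_\delta^1(1-n)\,|w_\delta^1|^2\,dx \;+\; \int_\Om |\nabla u_\delta^1|^2\,dx .
\]
By the last bound in Lemma~\ref{prop:ev1} the first integral is $\O(\delta^3)$, so everything reduces to expanding $\int_\Om|\nabla u_\delta^1|^2$ up to order $\delta^3$.

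To do so I would substitute the decomposition built in Section~\ref{order2} (with $w_1,w_2$ extended by $0$ in $\Om_\delta$, and with the sharpened normalising constant $C_\delta^1=\|u_\delta^1+\delta v_1-\delta w_1\|_{L^2(\Om)}$ of the Remark after Lemma~\ref{prop:ev2}),
\[
u_\delta^1 \;=\; -C_\delta^1\bigl(v_0+\delta v_1-\delta w_1-\delta^2 w_2\bigr) + (e_w^2-e_v^2)\qquad\text{in }\Om ,
\]
and expand $|\nabla u_\delta^1|^2$ into the pairwise products of the five gradients $\nabla v_0$, $\delta\nabla v_1$, $\delta\nabla w_1$, $\delta^2\nabla w_2$, $\nabla(e_w^2-e_v^2)$. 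Using the bounds of Lemma~\ref{prop:ev2} together with $\|\nabla w_j\|_{L^2(\lay)}\le\O(\delta^{-1/2})$ (read off from the explicit formulas for $w_1,w_2$) one checks that all but a fixed finite list of products are already $\O(\delta^3)$, and the remaining ones have to be evaluated exactly.

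These split into three kinds. \emph{(i) Boundary-layer integrals}: $\int_{\lay}|\nabla w_1|^2$, $\int_{\lay}\nabla v_0\cdot\nabla w_1$, $\int_{\lay}\nabla v_1\cdot\nabla w_1$, $\int_{\lay}\nabla v_0\cdot\nabla w_2$ and $\int_{\lay}\nabla w_1\cdot\nabla w_2$ are each computed to the required order through the change of variables $\vp$ and the local expressions of Section~\ref{prelim}, the essential inputs being $v_0|_\Ga=0$ (so $\partial_s\widetilde v_0|_\Ga=0$) and the identity $\partial^2_\eta\widetilde v_0|_\Ga=-\kappa\,\partial_\ni v_0$, which follows from $\Delta v_0|_\Ga=0$; for instance $\delta^2\int_{\lay}|\nabla w_1|^2=\delta\ld_1+\tfrac{\delta^2}{2}\int_\Ga\kappa\,|\partial_\ni v_0|^2\,ds+\O(\delta^3)$. \emph{(ii) Bulk integrals}: Green's identity together with \eqref{EqFirstD}, \eqref{Eqv1} and the orthogonality $\int_\Om v_0v_1\,dx=0$ give $\int_\Om|\nabla v_0|^2=\ld_0$, $\int_\Om\nabla v_0\cdot\nabla v_1=\ld_1$ and $\int_\Om|\nabla v_1|^2=\ld_0\|v_1\|_{L^2(\Om)}^2+\int_\Ga\partial_\ni v_0\,\partial_\ni v_1\,ds$. \emph{(iii) Cross terms with $e_w^2-e_v^2$}: here one integrates by parts; because $\partial_\ni e_w^2=\partial_\ni e_v^2$ on $\Ga$ (Lemma~\ref{le:ew2}) and $w_1,w_2$ vanish on $\Ga_\delta$, the $\Ga_\delta$-boundary terms and the $\Ga$-boundary terms carried by the layer functions disappear, the volume terms are absorbed by $\|\Delta e_w^2\|_{L^2(\lay)},\|\Delta e_v^2\|_{L^2(\Om)}\le\O(\delta^{3/2})$ (Lemma~\ref{prop:ev2}), and the only surviving contribution is the $\Ga$-boundary term of $\int_\Om\nabla v_0\cdot\nabla(e_w^2-e_v^2)$, namely $-\int_\Ga (e_w^2-e_v^2)|_\Ga\,\partial_\ni v_0\,ds$. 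Since $(e_w^2-e_v^2)|_\Ga=-C_\delta^1\delta^2 w_2|_\Ga$ and $w_2|_\Ga=\tfrac{\kappa}{2}\partial_\ni v_0-\partial_\ni v_1$, this equals $C_\delta^1\delta^2\int_\Ga w_2|_\Ga\,\partial_\ni v_0\,ds=-\delta^2\ld_2+\O(\delta^3)$ in view of \eqref{ldef2}; carrying the factor $-2C_\delta^1$ from the expansion it produces exactly $+2\delta^2\ld_2$. To close the argument one still needs the refined expansion of the normalising constant: from $\|u_\delta^1\|_{L^2(\Om)}=1$, $\int_\Om v_0v_1\,dx=0$ and the decomposition above one gets $\int_\Om u_\delta^1 v_1\,dx=-C_\delta^1\delta\|v_1\|_{L^2(\Om)}^2+\O(\delta^2)$, hence $(C_\delta^1)^2=1-\delta^2\|v_1\|_{L^2(\Om)}^2+\O(\delta^3)$, and likewise $\int_\Om v_0(e_w^2-e_v^2)\,dx=\O(\delta^3)$. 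Adding up all contributions, the terms in $\ld_0\|v_1\|_{L^2(\Om)}^2$ cancel between $(C_\delta^1)^2\int_\Om|\nabla v_0|^2$ and $\delta^2\int_\Om|\nabla v_1|^2$, the various surface integrals recombine (again via \eqref{ldef2}) into a single $\delta^2\ld_2$, and one arrives at $\int_\Om|\nabla u_\delta^1|^2=\ld_0+\delta\ld_1+\delta^2\ld_2+\O(\delta^3)$, whence the claim.

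The main obstacle is purely one of bookkeeping: in contrast with the first-order case, roughly a dozen distinct pairwise products contribute at order $\delta^2$, and they collapse to $\ld_2$ only after using all the orthogonality relations, the \emph{sharp} bound $C_\delta^1=1+\O(\delta^2)$ (not merely $1+\O(\delta)$, which would be insufficient), the improved choice of $C_\delta^1$, and the geometric identity $\partial^2_\eta\widetilde v_0|_\Ga=-\kappa\,\partial_\ni v_0$. Keeping every $\O(\delta^2)$ term and every factor $C_\delta^1$ consistently, and not silently dropping one, is the delicate point.
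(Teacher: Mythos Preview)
Your proposal is correct and follows the same overall template as the paper: start from \eqref{eq:charald}, discard the layer term as $\O(\delta^3)$, and expand $\int_\Om|\nabla u_\delta^1|^2$ using the second--order decomposition of Section~\ref{order2}. The organisation of the computation, however, is genuinely different from the paper's.

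The paper keeps the aggregates $\vapp^2=C_\delta^1(v_0+\delta v_1)$ and $\wapp^2=C_\delta^1(\delta w_1+\delta^2 w_2)$ intact, uses the \emph{original} constant $C_\delta^1=\|u_\delta^1+\delta v_1\|_{L^2(\Om)}$, and treats the delicate cross term $-2\int_\Om\nabla\vapp^2\cdot\nabla(w_\delta^1-e_v^2)$ in one block via the normalisation identity \eqref{eq:normalisationvapp2}; $\ld_2$ then appears only after summing the separate computations of $\int_\Om|\nabla\vapp^2|^2$, $\int_{\lay}|\nabla\wapp^2|^2$ and this cross term. You instead split into the five individual gradients, use the \emph{sharpened} constant of the Remark after Lemma~\ref{prop:ev2} (so that $\|e_v^2\|_{H^1}=\O(\delta^2)$ and $(C_\delta^1)^2=1-\delta^2\|v_1\|_{L^2}^2+\O(\delta^3)$ become available), and isolate $\ld_2$ directly as the $\Ga$--boundary term coming from $\int_\Om\nabla v_0\cdot\nabla(e_w^2-e_v^2)$, after showing via the normalisation that the accompanying volume term $\ld_0\int_\Om v_0(e_w^2-e_v^2)$ is $\O(\delta^3)$. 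Your route makes the emergence of $\ld_2$ more transparent but requires the extra input from the Remark and a finer control of $(C_\delta^1)^2$; the paper's route needs only the coarser $(C_\delta^1)^2=1+\O(\delta^2)$ but hides $\ld_2$ inside a longer algebraic recombination. Both rely on exactly the same analytic ingredients (Lemmas~\ref{le:ew2}--\ref{prop:ev2}, the identities $\partial_\ni e_w^2=\partial_\ni e_v^2$ on $\Ga$ and $(e_w^2-e_v^2)|_\Ga=-C_\delta^1\delta^2 w_2|_\Ga$, and the normalisation of $u_\delta^1$).
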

\begin{proof}
 Similarly to the proof of Theorem \ref{co:eigenvalue2} we expand the definition of $\ld_\delta^1$ by using the approximate eigenfunctions 
 \[
 w_{{\rm app}}^2 := C_\delta^1(\delta w_1 +\delta^2 w_2), \quad \mbox{and}\quad   v_{{\rm app}}^2 := C_\delta^1(v_0+\delta v_1)
 \] 
 and we extend $ w_{{\rm app}}^2$ by $0$ inside $\Om_\delta$. From the characterization \eqref{eq:charald} of $\ld_\delta^1$ and the bound \eqref{eq:ew1} we 
have
 \[
  \ld_\delta^1 = \int_{\Om} |\nabla u_\delta^1|^2\,dx + \O(\delta^3).
 \]
This writes
\begin{align}
\int_{\Om} |\nabla u_\delta^1|^{2}\,dx &= \int_{\Om} |\nabla (e_w^2- e_v^2  - \vapp^2 + \wapp^2)|^{2}\,dx \label{pse}\\
&=\int_{\Om}  |\nabla e_w^{2}|^{2} \, dx + \int_{\Om}  |\nabla e_v^{2}|^{2} \, dx + \int_{\Om} |\nabla \vapp^2|^{2} \,dx + \int_{\lay} |\nabla \wapp^2|^{2} \, dx\nonumber\\
&-2   \int_{\Om} \nabla \vapp^2 \nabla (w_\delta^1-e_v^{2} ) \, dx + 2   \int_{\lay} \nabla (e_w^{2}-e_v^2) \nabla \wapp^2 \, dx -2 \int_{\Om} \nabla e_v^{2} \nabla 
e_w^2 \, dx.\nonumber
\end{align}
There are several terms to evaluate which we consider one by one in the following.

\medskip

\noindent{\it Step1: Computation of the $\O(\delta^3)$ terms.} From Lemma \ref{prop:ev2} we can see easily
\[
\int_{\Om}  |\nabla e_w^{2}|^{2} \, dx= \O(\delta^3),\quad \int_{\Om}  |\nabla e_v^{2}|^{2} \, dx = \O(\delta^3) \; \text{ and }\; \int_{\Om} \nabla e_v^{2} \nabla e_w^2 \, dx 
=\O(\delta^3).
\]
Furthermore, 
\[
 \int_{\lay} \nabla (e_w^{2}-e_v^2) \nabla \wapp^2 \, dx = -  \int_{\lay} \Delta (e_w^{2}-e_v^2) \wapp^2 \, dx - \int_\Ga \D{}{\ni}(e_w^{2}-e_v^2) \wapp^2 \, dx
\]
and recalling that $\D{e_v^2}{\ni} = \D{e_w^2}{\ni}$ we obtain
\[
 \int_{\lay} \nabla (e_w^{2}-e_v^2) \nabla \wapp^2 \, dx = -  \int_{\lay} \Delta (e_w^{2}-e_v^2) \wapp^2 \, dx.
\]
But \eqref{eq:deltaw2} and \eqref{eq:deltaev2} give
\[
\|\Delta (e_w^{2}-e_v^2)\|_{L^2(\lay)} \leq \O(\delta^{3/2})
\]
which complemented with \eqref{eq:w1w2L2} gives
 \[
 \left|\int_{\lay} \nabla (e_w^{2}-e_v^2) \nabla \wapp^2 \, dx \right| \leq \O(\delta^{3}).
 \]
 
 \medskip
\noindent {\it Step 2: Computation of  $\int_{\Om} |\nabla \vapp^2|^{2} \, dx$.}  From its definition we   have after integration by part:
 \begin{align*}
 \int_{\Om} |\nabla \vapp^2|^{2} \, dx &= (C_\delta^1)^2\left(\int_{\Om} |\nabla v_0|^{2} \, dx + 2\delta \int_\Om \nabla v_0 \cdot \nabla v_1\, dx + \delta^2 \int_\Om |
\nabla v_1|^2\, dx \right)\\
 &= (C_\delta^1)^2\left( \ld_0 + 2\delta \ld_0 \int_\Om v_0 v_1\, dx +2\delta \ld_1  \right.\\
 &\qquad  \left.+\delta^2 \int_\Om (\ld_0 |v_1|^2 + \ld_1 v_0v_1)\, dx + \delta^2 \int_\Ga \D{v_1}{\ni} \D{v_0}{\ni}\,ds  \right) \\
 & = \ld_0\|\vapp^2\|^2_{L^2(\Om)}   + 2(C_\delta^1)^2  \delta \ld_1 +\delta^2  \int_\Ga \D{v_1}{\ni} \D{v_0}{\ni}\,ds.
 \end{align*}
 
 \medskip

\noindent{\it Step 3: Computation of  $\int_{\lay} |\nabla \wapp^2|^{2} \, dx$.} To this end we first write 
\begin{eqnarray}
 \int_{\lay} |\nabla \wapp^2|^{2} \, dx &=& (C_\delta^1)^2 \delta^2 \int_{\lay} |\nabla w_1|^2 \, dx \label{mut} \\
 &+& 2(C_\delta^1)^2 \delta^3 \int_{\lay} \nabla w_1 \cdot \nabla w_2\, dx + (C_\delta^1)^2 \delta^4 \int_{\lay} |\nabla w_2|^2 \, dx. \nonumber
 \end{eqnarray} 
 From the definition of $w_1$ and by using local coordinates we have
 \begin{align*}
 \delta^2 \int_{\lay} |\nabla w_1|^2 \, dx& = \delta^3\int_0^{s_0}\int_0^1\frac{1}{\delta^2}\left|\D{\hat{w}_1}{\xi}(s,\xi) \right|^2 (1+\delta\kappa\xi)\,d\xi ds + 
\O(\delta^3) \\
 & = \delta \ld_1 +\delta^2 \frac{\kappa}{2} \ld_1 + \O( \delta^3).
 \end{align*}
Similarly,  using the definition of $w_2$ we have
 \begin{align*}
 \delta^3 \int_{\lay} \nabla w_1 \cdot \nabla w_2\, dx &= \delta^4 \int_0^{s_0}\int_0^1\frac{1}{\delta^2} \D{\hat{w}_1}{\xi}(s,\xi) \D{\hat{w}_2}{\xi}(s,\xi) d\xi ds + 
\O(\delta^3) \\
 & = \delta^2 \int_0^{s_0}\int_0^1\left( -\kappa \D{v_0}{\ni} \xi + \D{v_1}{\ni}\right) \D{v_0}{\ni} \,d\xi ds + \O(\delta^3)\\
 &= - \delta^2 \frac{\kappa}{2} \ld_1 +\delta^2 \int_\Ga\D{v_1}{\ni} \D{v_0}{\ni} \, ds + \O(\delta^3).
 \end{align*}
 For the last term of (\ref{mut}) we simply have
 \[
  \delta^4 \int_{\lay} |\nabla w_2|^2 \, dx \leq \O(\delta^3).
 \]
Next we need to estimate the constant $(C_\delta^1)^2$. Indeed
 \begin{align}
 (C_\delta^1)^2 &= \int_\Om |u_\delta^1 + \delta v_1|^2 \, dx= \int_\Om | u_\delta^1|^2 +2 \delta \int_\Om u_\delta^1 v_1\, dx + \delta^2 \int_\Om |v_1|^2\, dx \notag \\
 &= 1  +  \delta^2 \int_\Om |v_1|^2\, dx + 2 \delta \int_\Om(u_\delta^1+v_0)v_1\,dx = 1 + \O(\delta^2) \label{eq:expandCdelta1}
 \end{align}
 since $\|u_\delta^1 +v_0\|_{L^2(\Om)}\leq \O(\delta)$. Plugging everything into (\ref{mut}) we finally obtain
 \begin{equation*}
  \int_{\lay} |\nabla \wapp^2|^{2} \, dx = \delta \ld_1 +\delta^2\left( 2 \int_\Ga \D{v_1}{\ni}\D{v_0}{\ni} \, ds - \frac{\kappa}{2} \ld_1\right) + \O(\delta^{3}).
 \end{equation*}

 \medskip
 
\noindent{\it Step 4: Computation of  $\int_{\Om} \nabla \vapp^2 \cdot \nabla (w_\delta^1 - e^2_v) \, dx$.} To this end, we make use of the equation satisfied by $\vapp^2$ 
together with the normalization of $u_\delta^1$ to simplify it. First we can write
 \begin{align*}
 \int_{\Om} \nabla \vapp^2 \cdot \nabla (w_\delta^1 - e^2_v) \, dx &= - \int_{\Om} \Delta \vapp^2  (w_\delta^1 - e^2_v) \, dx -\int_\Ga \D{\vapp^2}{\ni} \vapp^2 \,ds \\
&= C_\delta^1 \int_\Om (\ld_0v_0 + \delta \ld_0 v_1 + \delta \ld_1 v_0) (w_\delta^1 - e_v^2) \, dx \\ &\qquad-(C_\delta^1)^2 \int_\Ga\left(\D{v_0}{\ni} + \delta \D{v_1}
{\ni}\right)\delta v_1 \, ds. 
 \end{align*}
From  \eqref{eq:ew1} and Lemma \ref{prop:ev2} we have that
\[ C_\delta^1 \delta^2 \int_\Om \ld_1 v_1(w_\delta^1 - e_v^2) \, dx = \O(\delta^3)\]
whence
 \begin{equation*}
  \int_{\Om} \nabla \vapp^2 \cdot \nabla (w_\delta^1 - e^2_v) \, dx = (\ld_0+\delta \ld_1) \int_\Om \vapp^2(w_\delta^1 - e_v^2) \, dx +  (C_\delta^1)^2\delta \ld_1 + 
\delta^2\int_\Ga \D{v_1}{\ni} \D{v_0}{\ni} \,ds + \O(\delta^3).
 \end{equation*}
We now use the normalization of $u^1_\delta$ to obtain 
 \begin{align}
 \|u_\delta^1\|^2_{L^2(\Om)} = 1 &= \int_\Om | u_\delta^1 + \vapp^2 - \vapp^2 |^2 \, dx \notag \\
 &= \int_\Om |u_\delta^1 + \vapp^2|^2 \, dx -2 \int_\Om \vapp^2(u_\delta^1 +\vapp^2)\, dx + \int_\Om|\vapp^2|^2 \, dx \notag\\
 & = \O(\delta^3) -2 \int_\Om \vapp^2(u_\delta^1 +\vapp^2)\, dx + \int_\Om|\vapp^2|^2 \, dx \label{eq:normalisationvapp2}.
 \end{align}
 Hence since $u_\delta^1 +\vapp^2 = w_\delta^1 - e_v^2$, we have that 
 \[
 -2  \int_{\Om} \nabla \vapp^2 \cdot \nabla (w_\delta^1 - e^2_v) \, dx = (\ld_0 +\delta \ld_1)\left( 1 - \|\vapp^2\|_{L^2(\Om)}^2  \right) - 2  (C_\delta^1)^2\delta \ld_1 -2 
\delta^2\int_\Ga \D{v_1}{\ni} \D{v_0}{\ni} \,ds + \O(\delta^3).
 \]
The expansion \eqref{eq:expandCdelta1} and the definition of $\vapp^2$ yield
 \[
 \left| 1 - \|\vapp^2\|_{L^2(\Om)}^2  \right| \leq \O(\delta^{2})
 \]
 and consequently
  \[
 -2  \int_{\Om} \nabla \vapp^2 \cdot \nabla (w_\delta^1 - e^2_v) \, dx = \ld_0 \left( 1 - \|\vapp^2\|_{L^2(\Om)}^2  \right) - 2  (C_\delta^1)^2\delta \ld_1 -2 \delta^2\int_
\Om \D{v_1}{\ni} \D{v_0}{\ni} \,ds + \O(\delta^{3}).
 \]
 
 Finally we have all the necessary estimates to reach the conclusion. Plugging the estimates obtained in  Steps 1-4 into (\ref{pse}) leads to the desired final 
estimate:
 \[
 \ld_\delta^1 = \ld_0 + \delta \ld_1 +\delta^2 \ld_2 + \O(\delta^{3}).
 \]
\end{proof}

\begin{remark}
Although we stop at the order two the analysis of Section \ref{order2} is constructive and can in principle be carried through iteratively to any order of 
approximation. Note that  in order to prove the $k$  order of convergence for the transmission eigenvalue we need to prove the $(k-1)$ order of convergence for 
the corresponding eigenfunctions. Also the convergence procedure is not limited to only the first eigenvalue. All these generalizations rely upon the ability to 
compute explicitly  the terms in the asymptotic expansion of the transmission eigenvalues and on the estimate for the zero order approximation of the eigenvalue.
\end{remark}
\begin{remark}
In principle the second order asymptotic expansion of the the first transmission eigenvalue can be used to estimate the thickness of the layer provided that $
\Omega$ is known. In particular
$$\delta\approx\frac{\lambda_\delta^1-\lambda_0}{\lambda_1}$$
where $\lambda_\delta^1$ can be computed from the scattering data (see \cite{CaCoHa10}) and $\lambda_0$ and $\lambda_1$ can be computed.
\end{remark}


\bigskip

{\bf Acknowledgements.}  The  research of F.~C.\ is supported in part by the  Air Force Office of Scientific Research  Grant FA9550-11-1-0189 and NSF Grant 
DMS-1106972. 
\bibliographystyle{rspublicnat}
\bibliography{biblio}


\appendix
\section{Auxiliary  Regularity Estimates}\label{app1}
We start by establishing some crucial elliptic regularity estimates with explicit dependence of the constants on $\delta$. In the following, we denote by $C$ a 
generic constant independent of $\delta$. In the next Lemma we adopt the notations and the definitions of \cite[Section 4]{McL00}, which we recall here in a 
simplified setting. Let $\mathcal{O}$ be a connected Lipschitz domain of $\bR^2$ and denote by $(x_1,x_2)$ the coordinates of a point $x$ in some given basis. We 
define the operator $\mathcal{P}$ in $\mathcal{O}$ by 
\[
\mathcal{P} := -\sum_{i,j} \D{}{x_i}a_{ij}\D{}{x_j} 
\]
where for $(i,j) \in \{1,2\}^2$ $a_{ij} \in C^1(\mathcal{O})$. We say that $\mathcal{P}$ is coercive if there exists a constant $C>0$ such that for all $\xi \in \bR^2$ 
and $x\in \mathcal{O}$
\[
 \sum_{i,j}a_{ij}(x) \xi_i\xi_j \geq C|\xi|^2.
\]
We call the conormal derivative the operator $\mathcal{B}_\nu$ given by 
\[
\mathcal{B}_\nu := \sum_{i}\nu_i\gamma_{\partial\mathcal{O}} \left( \sum_j a_{ij} \D{}{x_j}\right)
\]
where $\gamma_{\partial\mathcal{O}}$ is the trace operator on $\partial\mathcal{O}$ and $\nu_i$ for $i=1,2$ is the i$^\text{th}$ component of the inward normal 
vector to $\partial \mathcal{O}$.
\begin{lemma}
\label{Lestrip}
Consider $\delta>0$, $g\in H^{1/2}(\bR)$ and $f\in L^2( \bR\times(0,\delta))$. Let  $\mathcal{P}$ be  a coercive operator with coercivity constant independent of $
\delta$ and $\mathcal{B}_\nu$  the associated conormal derivative. If $w \in H_0 := \{w \in H^1(\bR\times(0,\delta)) \,,\; w(x_1,\delta)=0 \; \mbox{for all}\, x_1\in \bR\}
$ solves
 \[\begin{cases}
\mathcal{P}w = f \text{ in } \bR\times(0,\delta) ,\\
\mathcal{B}_\nu w(x_1,0) = g(x_1) \text{ for all }  x_1 \in \bR, \\
w(x_1,\delta) = 0 \text{ for all } x_1 \in \bR , \\
\end{cases}
\]
 then there exists a constant $C>0$ independent of $\delta$ such that
\[
\|w\|_{H^{2}(\bR\times(0,\delta) )} \leq C\left(  \|f\|_{L^{2}(\bR\times(0,\delta) )}+ \|g\|_{H^{1/2}(\bR)}\right).
\]
\end{lemma}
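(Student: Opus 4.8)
The plan is to establish the $H^2$ estimate for the strip problem by the standard difference-quotient (Nirenberg translation) method, exploiting the translation invariance in the $x_1$-direction, and then recovering the second $\eta$-derivative directly from the equation. The key point that must be tracked throughout is that the strip $\bR\times(0,\delta)$ is thin, so one wants constants that do not blow up as $\delta\to 0$; this is why a Poincar\'e-type inequality with a $\delta$-independent constant (available because of the homogeneous Dirichlet condition on $x_2=\delta$) plays a central role.

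First I would write down the energy identity for $w$. Testing $\mathcal{P}w=f$ with $w$ itself and using the conormal boundary datum $g$ on $x_2=0$ together with the homogeneous condition on $x_2=\delta$, coercivity of $(a_{ij})$ gives
\[
C\|\nabla w\|_{L^2(\bR\times(0,\delta))}^2 \leq \|f\|_{L^2}\|w\|_{L^2} + \|g\|_{H^{-1/2}(\bR)}\|w\|_{H^{1/2}(\bR\times\{0\})}.
\]
Using the trace inequality from the strip to its boundary $x_2=0$ (with explicit $\delta$-dependence — this is exactly the kind of trace lemma flagged in the paper) and the Poincar\'e inequality $\|w\|_{L^2(\bR\times(0,\delta))}\leq C\delta\|\partial_{x_2}w\|_{L^2}$ coming from $w(\cdot,\delta)=0$, one absorbs the right-hand side and obtains $\|w\|_{H^1(\bR\times(0,\delta))}\leq C(\|f\|_{L^2}+\|g\|_{H^{1/2}(\bR)})$ with $C$ independent of $\delta$ (indeed one even gains a power of $\delta$, but the plain bound is all that is needed here).

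Next I would differentiate in the tangential variable. Let $\tau_h w(x) := w(x_1+h,x_2)$ and $D_h w := (\tau_h w - w)/h$. Since the coefficients $a_{ij}\in C^1$ and the domain is translation-invariant in $x_1$, $D_h w$ satisfies a problem of the same type with right-hand side $D_h f$ plus a commutator term involving $D_h a_{ij}$ and $\nabla w$ (bounded in $L^2$ by $\|a_{ij}\|_{C^1}\|\nabla w\|_{L^2}$), and with boundary datum $D_h g$ on $x_2=0$, still vanishing on $x_2=\delta$. Applying the $H^1$ estimate just proved to $D_h w$ and letting $h\to 0$ yields $\partial_{x_1}w \in H^1(\bR\times(0,\delta))$ with
\[
\|\partial_{x_1}w\|_{H^1(\bR\times(0,\delta))} \leq C\big(\|f\|_{L^2}+\|g\|_{H^{1/2}(\bR)}\big),
\]
where I have used $\|D_h f\|_{L^2}\leq\|\partial_{x_1}f\|_{L^2}\leq\|f\|_{L^2}$... more carefully, since only $f\in L^2$ is assumed, I would instead put $D_h f$ on the right as a functional paired against $D_h w$ and integrate by parts to move the difference quotient back onto $w$; this keeps everything in terms of $\|f\|_{L^2}$. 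This step controls $\partial_{x_1}^2 w$ and $\partial_{x_1}\partial_{x_2}w$ in $L^2$.

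Finally, the missing derivative $\partial_{x_2}^2 w$ is read off algebraically from the equation: since $a_{22}$ is bounded below by the coercivity constant,
\[
a_{22}\,\partial_{x_2}^2 w = -f - \sum_{(i,j)\neq(2,2)} a_{ij}\,\partial_{x_i}\partial_{x_j}w - \sum_{i,j}(\partial_{x_i}a_{ij})\,\partial_{x_j}w,
\]
and every term on the right is already bounded in $L^2$ by $C(\|f\|_{L^2}+\|g\|_{H^{1/2}(\bR)})$. Combining the three bounds gives the claimed estimate with $C$ independent of $\delta$. I expect the main obstacle to be bookkeeping the $\delta$-dependence so that nothing degenerates as $\delta\to 0$: concretely, making sure the trace and Poincar\'e inequalities on the thin strip are invoked with the correct (and $\delta$-uniform, or $\delta$-favorable) constants, and checking that the difference-quotient argument does not secretly reintroduce a $\delta$-dependent constant through the trace term for the boundary datum $g$. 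The translation-invariance in $x_1$ is what makes the whole argument clean — there is no need for a localization/flattening partition of unity, since the geometry is already a flat strip.
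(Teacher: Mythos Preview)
Your proposal is correct and follows essentially the same approach as the paper: the difference-quotient (Nirenberg translation) method in the tangential $x_1$-direction, together with the equation to recover $\partial_{x_2}^2 w$, and careful use of the $\delta$-uniform trace inequality for the boundary term. The paper presents the argument slightly more compactly by testing the variational formulation directly with $\psi=\Delta_{-h}\Delta_h w$ (which is exactly your corrected ``move the difference quotient back onto $w$'' step) rather than first isolating the $H^1$ estimate and then applying it to $D_h w$, but the content is the same.
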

\begin{remark}
The novel important aspect  in the above a priori estimate is to show that the constant is independent of $\delta$, and to our knowledge such a result was not 
available in the literature.
\end{remark}
\begin{proof}
Using Green's formula we have that 
\[
\int_{\bR\times(0,\delta)}\mathcal{P} w \psi \,dx_1dx_2 =\Phi( w, \psi ) + \int_\bR \mathcal{B}_\nu w(x_1,0) \psi(x_1,0) \, dx_1
\]
or
\begin{equation}
\label{eq:vfchiv}
\int_{\bR\times(0,\delta)}f\psi \,dx_1dx_2 = \Phi(w, \psi ) +\int_\bR g(x_1)\psi(x_1,0) \, dx_1
\end{equation}
 for all $\psi \in H_0$, where the bilinear form $\Phi(\cdot,\cdot)$ is defined  by
\[
\Phi(u, v ) =\sum_{i,j} \int_{\bR\times(0,\delta)} a_{ij} \D{u}{x_i}\D{v}{x_j} \, dx_1dx_2\qquad \mbox{for all  $(u,v) \in H_0$}.
\]
In order to obtain the desired regularity result we apply  the approach of the difference quotient in the direction $x_1$. To this end  for $h\in \bR$ and all $
(x_1,x_2)\in \bR\times[0,\delta]$  we define the difference quotient by
\[
\Delta_h u := \cfrac{u(x_1+h,x_2) - u(x_1,x_2)}{h}.
\]
Straightforward algebraic calculations show that the following formulas hold true
\begin{equation}
\label{eq:quot1}
\int_{\bR\times(0,\delta)} (\Delta_h u) v\,dx_1dx_2 = -\int_{\bR\times(0,\delta)} u  (\Delta_{-h} v)\,dx_1dx_2,
\end{equation}
\begin{equation}
\label{eq:quot2}
\left|\Phi(\Delta_h u, v ) + \Phi( u, \Delta_{-h}v )\right| \leq C\|v\|_{H^1(\bR\times(0,\delta))} \|u\|_{H^1(\bR\times(0,\delta))},
\end{equation}
 for all  $u$ and $v$ in $H_0$, and moreover there exists a constant $C>0$ independent of $\delta$ such that for all $u\in H_0$ and $h$ sufficiently small
\begin{equation}
\label{eq:equiv}
 C\left\| \D{u}{x_1}\right\|_{L^2(\bR\times(0,\delta))}\leq\|\Delta_h u\|_{L^2(\bR\times(0,\delta))} \leq \left\| \D{u}{x_1}\right\|_{L^2(\bR\times(0,\delta))}
\end{equation}
(see \cite{McL00}, Lemma 4.13) for the proof of this last result). Substituting (\ref{eq:quot1}) and (\ref{eq:quot2}) in  (\ref{eq:vfchiv}) we  obtain that for $\psi = 
\Delta_{-h}\Delta_h  w \in H_0$ and all $h$ sufficiently small
\begin{align}
|\Phi(\Delta_h  w&, \Delta_h  w)| \leq C \|w\|_{H^1(\bR\times(0,\delta))} \|\Delta_h  w\|_{H^1(\bR\times(0,\delta))}  \notag \\
&+\left| \int_\bR g(x_1) (\Delta_{-h}\Delta_h  w) (x_1,0)\, dx_1 + \int_{\bR\times(0,\delta)}f (\Delta_{-h}\Delta_h  w) \,dx_1dx_2\right| . \label{eq:vfdelta}
 \end{align}
From \cite{McL00}, Exercise 4.4, we have that or all $s \in \bR$
\[
\|\Delta_h u\|_{H^s(\bR)} \leq \left\|\D{u}{x_1}  \right\|_{H^s(\bR)} 
\]
provided  it is known that $\displaystyle{\D{u}{x_1}} \in H^s(\bR)$. On the other hand for the boundary term in (\ref{eq:vfdelta}) we have 
\begin{align}
&  \int_\bR |g \Delta_{-h}\Delta_h  w |\, dx_1 \leq \| g\|_{H^{1/2}(\bR)}  \|\Delta_{-h}\Delta_h  w\|_{H^{-1/2}(\bR)} \notag \\
& \qquad \qquad \leq  \| g\|_{H^{1/2}(\bR)} \|\Delta_h  w\|_{H^{1/2}(\bR)} \leq C \| g\|_{H^{1/2}(\bR)} \|\Delta_h  w\|_{H^{1}(\bR\times(0,\delta))} \label{eq:bd1}
\end{align}
with a constant $C>0$ independent of $\delta$ and $h$ (see the trace Lemma \ref{Le:Trace} for the last inequality). Finally, the coercivity of $\Phi$ together with  
(\ref{eq:equiv}), (\ref{eq:vfdelta}) and (\ref{eq:bd1}) give
\[
\|\Delta_h  w\|_{H^1(\bR\times(0,\delta))} \leq C \left(\| g\|_{H^{1/2}(\bR)}+ \|w\|_{H^1(\bR\times(0,\delta))} + \|f\|_{L^2(\bR\times(0,\delta))}\right)
\] 
which in view of \cite{McL00}, Lemma 4.13, gives
\[
\left\|\D { w}{x_1}\right\|_{H^1(\bR\times(0,\delta))} \leq C \left(\| g\|_{H^{1/2}(\bR)}+ \|w\|_{H^1(\bR\times(0,\delta))} + \|f\|_{L^2(\bR\times(0,\delta))}\right).
\]
To estimate the second order derivative with respect to $x_2$, we recall that $\mathcal{P} w = f$ and since $\mathcal{P}$ is coercive there exists a constant 
$C>0$ that depends on the coefficients $a_{ij}$ but not on $\delta$ such that
\begin{align*}
\left\|\D {^2w}{x_2^2}\right\|_{L^2(\bR\times(0,\delta))} &\leq C\left( \|f\|_{L^2(\bR\times(0,\delta))} +  \left\|\D {w}{x_1}\right\|_{H^1(\bR\times(0,\delta))}\right) \\
&\leq C \left(\| g\|_{H^{1/2}(\bR)}+ \|w\|_{H^1(\bR\times(0,\delta))} + \|f\|_{L^2(\bR\times(0,\delta))}\right).
\end{align*}
Then the result is a consequence of  the standard a priori estimate for  $ \|w\|_{H^1(\bR\times(0,\delta))}$ making use of the coercivity of $\mathcal{P}$.
\end{proof}

From the above regularity result  in a strip it is now possible to obtain the same type of regularity result in $\lay$ first with homogeneous  mixed boundary 
conditions which is stated in Lemma \ref{lema1} and then with inhomogeneous mixed boundary conditions which is stated in Lemma \ref{lema2}. 

\begin{lemma}\label{lema1}
\label{LeRegInd}
There exists  a constant $C>0$ independent of $\delta$ such that  for all $f\in L^{2}(\lay)$ the unique solution $w \in H^1(\lay)$ of 
\[\begin{cases}
-\Delta w = f \text{ in }  \lay ,\\
\displaystyle \D{w}{\ni} = 0 \text{ on }  \Ga , \\
w = 0 \text{ on }  \Ga_{\delta} , \\
\end{cases}
\]
satisfies the a priori estimate 
\[
\|w\|_{H^{2}(\lay)} \leq C \|f\|_{L^{2}(\lay)}.
\]

\end{lemma}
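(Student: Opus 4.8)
The plan is to flatten the thin layer $\lay$ onto the straight strip $(0,s_0)\times(0,\delta)$ (with $s$ identified periodically) by means of the \emph{$\delta$-independent} diffeomorphism $\vp$ introduced in Section~\ref{prelim}, to transport the problem to that strip, and then to invoke Lemma~\ref{Lestrip}, whose whole point is that its constant does not depend on the width $\delta$. Since $\delta<\eta_0$, the layer $\lay$ is exactly $\vp\big((0,s_0)\times(0,\delta)\big)$ with periodic identification in $s$, with $\Ga$ corresponding to $\{\eta=0\}$ and $\Ga_\delta$ to $\{\eta=\delta\}$, so a single global change of variables suffices and no partition of unity is needed.

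\textbf{The transported problem.} Setting $\wt w:=w\circ\vp$ and $\wt f:=f\circ\vp$ and using the formulas for $\nabla u$ and $\div\vec u$ in the coordinates $(s,\eta)$ recalled in Section~\ref{prelim}, the weak form of $-\Delta w=f$ becomes $\Phi(\wt w,\wt\psi)=\int_{(0,s_0)\times(0,\delta)}(1+\eta\kappa)\,\wt f\,\wt\psi\,ds\,d\eta$, i.e.
\[
\Pcal\wt w:=-\D{}{s}\Big(\tfrac{1}{1+\eta\kappa}\D{\wt w}{s}\Big)-\D{}{\eta}\big((1+\eta\kappa)\D{\wt w}{\eta}\big)=(1+\eta\kappa)\,\wt f\quad\text{in }(0,s_0)\times(0,\delta).
\]
The coefficients $a_{11}=1/(1+\eta\kappa)$ and $a_{22}=1+\eta\kappa$ are restrictions of fixed smooth functions on $[0,s_0]\times[-\eta_0,\eta_0]$, and for $\delta$ small enough $\tfrac12\le 1+\eta\kappa\le\tfrac32$ on $[0,s_0]\times[0,\delta]$, so $\Pcal$ is coercive with a constant independent of $\delta$. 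The inward conormal derivative attached to $\Pcal$ on $\{\eta=0\}$ is $a_{22}|_{\eta=0}\,\D{\wt w}{\eta}=\D{\wt w}{\eta}$, which under $\vp$ is precisely $\D{w}{\ni}$ on $\Ga$; hence the boundary conditions become $\mathcal{B}_\nu\wt w=0$ on $\{\eta=0\}$ and $\wt w=0$ on $\{\eta=\delta\}$, which after $s_0$-periodic extension in $s$ is exactly the setting of Lemma~\ref{Lestrip} with Neumann datum $g\equiv0$.

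\textbf{Conclusion.} Lemma~\ref{Lestrip} then gives $\|\wt w\|_{H^2((0,s_0)\times(0,\delta))}\le C\|(1+\eta\kappa)\wt f\|_{L^2}\le C\|\wt f\|_{L^2((0,s_0)\times(0,\delta))}$ with $C$ independent of $\delta$. Transforming back through $\vp$ — whose $C^2$ norm and whose Jacobian are bounded above and below by $\delta$-independent constants, because $\vp$ is fixed once and for all and we are only restricting it to a thinner and thinner tubular neighbourhood of $\Ga$ — yields $\|w\|_{H^2(\lay)}\le C\|\wt w\|_{H^2((0,s_0)\times(0,\delta))}$ and $\|\wt f\|_{L^2((0,s_0)\times(0,\delta))}\le C\|f\|_{L^2(\lay)}$, and combining the three estimates proves the claim. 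Existence and uniqueness of $w$ follow from the Lax--Milgram theorem on $\{u\in H^1(\lay):u=0\text{ on }\Ga_\delta\}$, where the Dirichlet condition on $\Ga_\delta$ together with the $O(\delta)$ Poincar\'e inequality makes the Dirichlet form coercive there.

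\textbf{Main obstacle.} The only genuinely delicate point is keeping every constant independent of $\delta$, which is exactly why the argument is routed through Lemma~\ref{Lestrip} rather than through a textbook elliptic-regularity estimate, whose constant would blow up as the strip degenerates. Everything else — the change of variables, the periodic extension in $s$, and the identification of the conormal derivative with $\D{w}{\ni}$ — is routine.
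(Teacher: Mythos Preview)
Your route is genuinely different from the paper's and, in this two-dimensional simply-connected setting, cleaner. The paper does \emph{not} use a single global flattening: it covers $\overline{\lay}$ by finitely many charts $\Omega_i$, introduces a partition of unity $(\phi_i)$, sets $w_i=\phi_i w$, and flattens each piece separately. Because $w_i$ has compact support in the tangential variable, its local image can be extended \emph{by zero} to $\bR\times(0,\delta)$, which is exactly the domain on which Lemma~\ref{Lestrip} is stated; the price is that the localized problem acquires a nonzero Neumann datum $g_i=w\,\partial_{\ni}\phi_i$ and extra source terms $2\nabla w\cdot\nabla\phi_i+w\Delta\phi_i$, all controlled via a preliminary $\delta$-independent $H^1$ bound. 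Your global parametrization avoids the partition of unity entirely and keeps the Neumann datum identically zero, which is a real simplification and is available precisely because $\Ga$ is a single smooth closed curve.

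There is, however, one step to repair. The claim that ``after $s_0$-periodic extension in $s$ [this] is exactly the setting of Lemma~\ref{Lestrip}'' is incorrect: the periodic extension of a nonzero $\wt w$ is not in $H^1(\bR\times(0,\delta))$, so Lemma~\ref{Lestrip} as stated does not apply. What you actually need is the analogue of that lemma on the periodic strip $(\bR/s_0\bZ)\times(0,\delta)$. This is harmless---the tangential difference-quotient argument in the proof of Lemma~\ref{Lestrip} goes through verbatim for $s_0$-periodic functions (indeed it simplifies, since no decay at infinity is required)---but you must say so explicitly rather than invoke the lemma as written. With that amendment your argument is complete.
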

\begin{proof}
First from  the standard a priori estimate for the Laplacian we have that 
\begin{equation}
\label{eq:EstH1}
\|w\|_{H^{1}(\lay)} \leq C\|f\|_{L^{2}(\lay)}
\end{equation}
with a constant $C>0$ independent of $\delta$.
\begin{figure}[hh]
\begin{center}
\includegraphics[width=.35\textwidth]{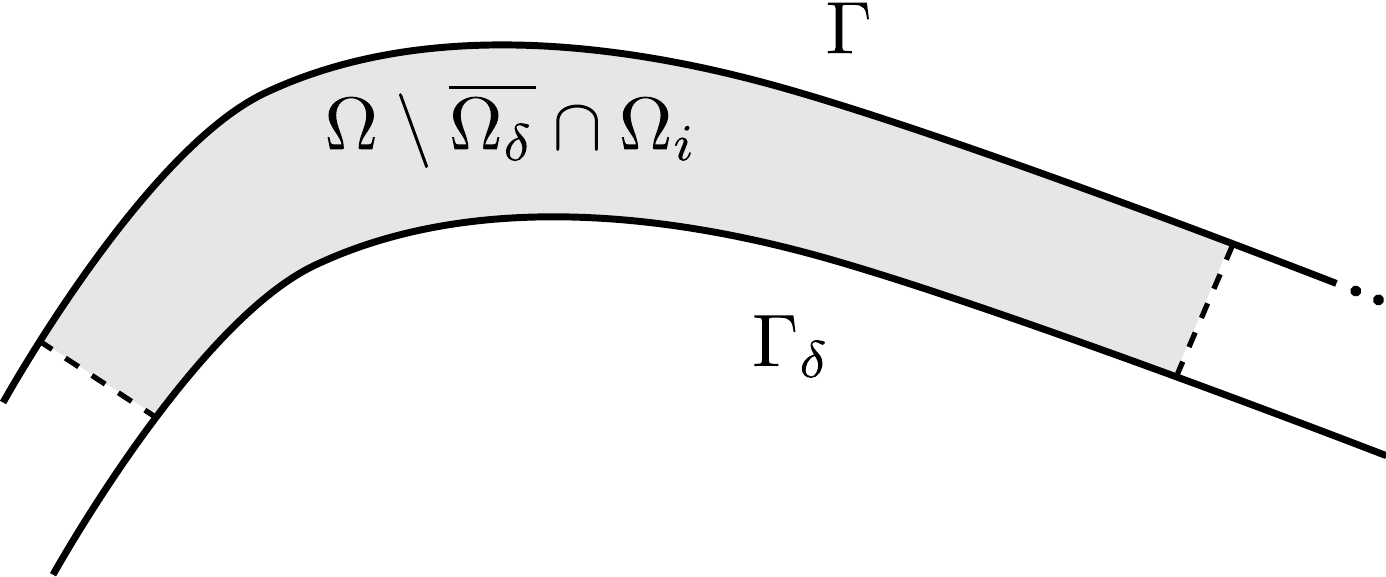}
\end{center}
\caption{Local covering of the layer}
\label{fig:cover}
\end{figure}
To obtain $H^2$ estimates our approach is based on first locally straighten the boundary and then apply Lemma \ref{Lestrip}.  To this end,  since $\b{\lay}$ is a 
compact set, there exists an integer $n$ and a sequence $(\Om_i)_{i=1,\cdots,n}$ of bounded and connected domains of $\bR^2$ such that $\b{\lay} \subset 
\cup_{i=1}^{n} \Om_i$  for all $\delta$ sufficiently small. Moreover, we take  $\Om_i$ such that there exists $s_i>0$ and a $C^{\infty}([-s_i,s_i])$, $k\geq 0$, function 
$x_\Ga$ such that for all $\delta$ sufficiently small we have 
\[
(\lay) \cap \Om_i =\{x_\Ga(s) + \eta \ni(s) \,,\forall (s,\eta)\in (-s_i,s_i)\times (0,\delta) \}
\]
where $x_\Ga(s) \in \Ga$ for $s\in (-s_i,s_i)$ and $i=1,\cdots,n$ (see Figure \ref{fig:cover}). Thus for all $\delta$ sufficiently small, 
\begin{align*}
\vp_i \,:\quad \b{ (\lay) \cap \Om_i }&\longrightarrow \b{\hat{\Om}_i}\\
 x&\longmapsto (s,\eta)
\end{align*}
is a $C^{2}$-diffeormorphism, where $\hat{\Om}_i:=(-s_i,s_i)\times (0,\delta)$.  Let $(\phi_i)_{i=1,\cdots, n}$ be a partition of unity such that  $\phi_i\in C^
\infty(\bR^2)$, $\text{supp} (\phi_i) \subset \Om_i$ and $\sum_i \phi_i =1$ in $\b{\lay}$. Hence if we define $w_i:= \phi_i w \in H^1(\lay)$, then 
\begin{equation}
\label{eq:unity}
\|w\|_{H^{2}(\lay)}  \leq \sum_{i=1}^N \|w_i\|_{H^{2}(\lay)}
\end{equation}
and $w_i$ is compactly supported in $\Om_i$. Furthermore, $w_i$ solves
\[\begin{cases}
-\Delta w_i = f_i \text{ in }  \lay ,\\
\displaystyle \D{w_i}{\ni} = g_i \text{ on }  \Ga , \\
w_i = 0 \text{ on }  \Ga_{\delta} , \\
\end{cases}
\]
with
\[
f_i := f\phi_i+ 2 \nabla w\nabla\phi_i + w\Delta\phi_i \qquad \mbox{and}\qquad g_i:=w\D{\phi_i}{\ni}.
\]
In the following, for $U\in L^2((\lay) \cap \Om_i )$ and $G\in H^{s}(\Ga\cap\partial \Om_i)$ we denote by $\wt{U} \in L^2(\hat{\Om}_i)$ and $\tilde G\in H^s((-
s_i,s_i))$  the functions defined by $\wt{U}  := U \circ \vp_i$ and $\wt{G}  := G \circ \vp_i$,  respectively. Since the $W^{2,\infty}$ norm of $\vp_i$ and $\vp_i^{-1}$  
does not depend on $\delta$ it is easy to see that there exists a constant $C>0$ independent of $\delta$ such that for $U\in H^p((\lay) \cap \Om_i)$
\begin{equation}\label{eq:eqnorm1}
\cfrac{1}{C} \|\wt{U}\|_{H^p(\hat{\Om}_i)} \leq \|U\|_{H^p((\lay) \cap \Om_i)} \leq C \|\wt{U}\|_{H^p(\hat{\Om}_i)} \qquad \mbox{for all integer $0<p\leq2 $}
\end{equation}
and for $G\in H^{s}(\Ga\cap\partial \Om_i)$
\begin{equation}\label{eq:eqnorm2}
\cfrac{1}{C} \|\wt{G}\|_{H^s((-s_i,s_i))} \leq \|G\|_{H^s(\Ga\cap\partial \Om_i)} \leq C \|\wt{G}\|_{H^s((-s_i,s_i)}\qquad \mbox{for all $0<s\leq2$}.
\end{equation}
With these notations, we can now prove using the calculations developed in Section \ref{sec:devas} that $\wt{w}_i \in H^1(\hat{\Om}_i)$ solves
\begin{equation}
\label{eq:locvar}\begin{cases}
\mathcal{P} \wt{w}_i = \wt{f}_i \text{ in }  \hat{\Om}_i,\\
(\mathcal B_\nu \wt{w}_i)(s,0) = \wt{g}_i(s) \text{ for all }   s \in (-s_i,s_i), \\
\wt{w}_i(s,\delta) = 0 \text{ for all }  s \in (-s_i,s_i), \\
\end{cases}
\end{equation}
where $\wt{f}_i(s,\eta) := (1+\eta \kappa(s))(f_i \circ \vp_i)(s,\eta)$, $\wt{g}_i(s,\eta):=-(1+\eta \kappa(s))(g_i\circ\vp_i)(s,\eta)$ and $\mathcal{P}$ and $\mathcal{B}_
\nu$ are as in Lemma \ref{Lestrip} (note that the coercivity constant for $\mathcal{P}$ does not depend on $\delta$). Furthermore since $\wt{w}_i$, $\wt{f}_i$ and $
\wt{g}_i$ are equal to $0$ in a vicinity of $-s_i$ and $s_i$ we can extend them by $0$ into $\bR\times (0,\delta)$. For sake of simplicity, we do not change the 
notations for their extension and note that these extension also satisfy the system (\ref{eq:locvar}) for  $s\in \bR$. Hence we can apply Lemma \ref{Lestrip} to $
\wt{w}_i$ to obtain
\[
\|\wt{w}_i\|_{H^2(\bR\times(0,\delta))}\leq C\left(\|\wt{g}_i\|_{H^{1/2}(\bR)}+ \|\wt{f}_i\|_{L^2(\bR\times(-s_i,s_i))}\right)
\]
where $C$ is independent of $\delta$. Using (\ref{eq:eqnorm1}) and  (\ref{eq:eqnorm2}) in  (\ref{eq:unity}) together with our first a priori estimate (\ref{eq:EstH1}) 
finally proves the lemma.
\end{proof}

Next we obtain the same type of estimates as in Lemma \ref{LeRegInd} for inhomogeneous  boundary condition on $\Ga$. The challenge  is to show that the 
lifting function is bounded independently of $\delta$ in appropriate norm.
\begin{lemma}\label{lema2}
\label{LeBoundUnif} 
Let $g\in H^{1/2}(\Ga)$ and $w \in H^1(\lay)$ be the unique solution of 
\[\begin{cases}
\Delta w = 0 \text{ in } \lay ,\\
\displaystyle \D{w}{\ni} = g \text{ on }  \Ga , \\
w = 0 \text{ on } \Ga_{\delta}. \\
\end{cases}
\]
Then there exists a constant $C>0$ such that for all $\delta>0$ 
\[
\|w\|_{H^{2}(\lay)} \leq C \|g\|_{H^{1/2}(\Ga)}.
\]
In addition, if $\|g\|_{H^{1/2}(\Ga)}\leq C_g$ for some $C_g>0$  and all $\delta >0$, then
\[
\|w\|_{H^{2}(\lay)} \underset{\delta \to 0}{\longrightarrow} 0.
\]
\end{lemma}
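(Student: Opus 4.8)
The plan is to obtain the $H^2$ bound by the localisation argument already used for Lemma~\ref{LeRegInd}, the only genuinely new ingredient being a $\delta$-uniform $H^1$ bound for $w$, and to obtain the convergence statement by combining an explicit lifting in the case of smooth data with a density argument. We start with the $H^1$ estimate. Since $w=0$ on $\Ga_\delta$, in the local coordinates $(s,\eta)$ of Section~\ref{prelim} one has $\wt w(s,0)=-\int_0^\delta\partial_\eta\wt w(s,\eta)\,d\eta$, so Green's identity gives
\[
\int_\lay|\nabla w|^2\,dx=-\int_\Ga g\,w\,ds=\int_\lay\frac{g(s)}{J_{s,\eta}}\,\partial_\eta\wt w\,dx,
\]
and since $\|g(s)/J_{s,\eta}\|_{L^2(\lay)}\le C\,\delta^{1/2}\|g\|_{L^2(\Ga)}$ — the factor $\delta^{1/2}$ coming from the thin measure — the Cauchy--Schwarz inequality gives $\|\nabla w\|_{L^2(\lay)}\le C\,\delta^{1/2}\|g\|_{H^{1/2}(\Ga)}$; together with the elementary Poincar\'e inequality $\|w\|_{L^2(\lay)}\le C\,\delta\|\nabla w\|_{L^2(\lay)}$ (valid because $w$ vanishes on $\Ga_\delta$) this yields $\|w\|_{H^1(\lay)}\le C\,\delta^{1/2}\|g\|_{H^{1/2}(\Ga)}$ with $C$ independent of $\delta$. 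We would then argue as in the proof of Lemma~\ref{LeRegInd}: cover $\b{\lay}$ by finitely many charts, choose the partition of unity $(\phi_i)$ so that near $\Ga$ each $\phi_i$ depends only on the arclength variable (whence $\D{\phi_i}{\ni}=0$ on $\Ga$), flatten each chart to a strip $(-s_i,s_i)\times(0,\delta)$ by the fixed diffeomorphism $\vp_i$, and apply Lemma~\ref{Lestrip} to the push-forward $\wt w_i$ of $\phi_i w$. For that function the conormal datum on $\{\eta=0\}$ is (the push-forward of) $\phi_i g$, with $H^{1/2}$ norm $\le C\|g\|_{H^{1/2}(\Ga)}$, while the right-hand side contains only the commutator terms $2\nabla\phi_i\cdot\nabla w+w\Delta\phi_i$ (the term $\phi_i\Delta w$ vanishing), with $L^2$ norm $\le C\|w\|_{H^1(\lay)}\le C\|g\|_{H^{1/2}(\Ga)}$. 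Summing over $i$ by means of \eqref{eq:unity} and the $\delta$-uniform norm equivalences \eqref{eq:eqnorm1}--\eqref{eq:eqnorm2} then gives $\|w\|_{H^2(\lay)}\le C\|g\|_{H^{1/2}(\Ga)}$.

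For the convergence statement we would first treat the case $g\in H^2(\Ga)$ and then pass to a general $g\in H^{1/2}(\Ga)$ by density. When $g\in H^2(\Ga)$, take the explicit lifting $R$ given in local coordinates by $\wt R(s,\eta)=(\eta-\delta)\,g(s)$; then $R\in H^2(\lay)$, $\D{R}{\ni}=g$ on $\Ga$, $R=0$ on $\Ga_\delta$, and a computation with the expression of the Laplacian in local coordinates shows that the only term in $\Delta R$ that does not vanish pointwise as $\delta\to0$ is the curvature term $\kappa\,g/(1+\eta\kappa)$, whose $L^2(\lay)$ norm still carries a factor $\delta^{1/2}$ because the layer has width $\delta$; with the analogous bounds for $R$ itself this gives $\|R\|_{H^2(\lay)}+\|\Delta R\|_{L^2(\lay)}\le C\,\delta^{1/2}\|g\|_{H^2(\Ga)}$. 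Since $w-R$ solves $-\Delta(w-R)=\Delta R$ in $\lay$ with $\D{(w-R)}{\ni}=0$ on $\Ga$ and $w-R=0$ on $\Ga_\delta$, Lemma~\ref{LeRegInd} gives $\|w-R\|_{H^2(\lay)}\le C\|\Delta R\|_{L^2(\lay)}$, hence $\|w\|_{H^2(\lay)}\le C\,\delta^{1/2}\|g\|_{H^2(\Ga)}\to0$. For a general $g$ and a given $\varepsilon>0$, we would pick $g_\varepsilon\in C^\infty(\Ga)$ with $\|g-g_\varepsilon\|_{H^{1/2}(\Ga)}<\varepsilon$, write $w=w_\varepsilon+r_\varepsilon$ by linearity (with boundary data $g_\varepsilon$ and $g-g_\varepsilon$), bound $\|w_\varepsilon\|_{H^2(\lay)}\le C\,\delta^{1/2}\|g_\varepsilon\|_{H^2(\Ga)}\to0$ by the previous step and $\|r_\varepsilon\|_{H^2(\lay)}\le C\|g-g_\varepsilon\|_{H^{1/2}(\Ga)}<C\varepsilon$ by the first estimate, and conclude $\limsup_{\delta\to0}\|w\|_{H^2(\lay)}\le C\varepsilon$; letting $\varepsilon\to0$ finishes the proof.

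The step we expect to be the main obstacle is the $\delta$-independence of the constant in the first estimate. One might hope to reduce it to Lemma~\ref{LeRegInd} by subtracting from $w$ a lifting of the Neumann datum vanishing on $\Ga_\delta$, but producing such a lifting with $H^2(\lay)$ norm $\le C\|g\|_{H^{1/2}(\Ga)}$ \emph{uniformly} in $\delta$ is itself delicate: a fixed $H^2(\Om)$-extension of $g$, cut off to the layer, has $H^2(\lay)$ norm that blows up as $\delta\to0$, while the elementary choice $\wt R(s,\eta)=(\eta-\delta)g(s)$ used above only makes sense in $H^2(\lay)$ when $g\in H^2(\Ga)$; for rough data the good lifting is essentially the solution of an auxiliary thin-domain problem, so the argument is instead routed through the strip estimate of Lemma~\ref{Lestrip}, whose whole content is the $\delta$-independence of its constant, applied chart by chart after flattening. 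A secondary, bookkeeping-type point is that the chart maps, the partition of unity and the norm equivalences entering the gluing must all be chosen with bounds independent of $\delta$, which is possible because the underlying tubular-neighbourhood diffeomorphism of Section~\ref{prelim} is fixed once and for all.
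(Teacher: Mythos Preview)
Your proof is correct and takes a genuinely different route from the paper. For the uniform $H^2$ estimate, the paper constructs an explicit lifting of the Neumann datum via the Fourier transform in each flattened chart (namely $w_i(\xi,\eta)=\mathcal F\bar g_i(\xi)\,\sinh(|\xi|(\eta-\delta))/(|\xi|\cosh(|\xi|\delta))$), verifies by direct computation that its $H^2$ norm is bounded by $C\|g\|_{H^{1/2}}$ uniformly in $\delta$, and then reduces to the homogeneous-Neumann estimate of Lemma~\ref{LeRegInd}. You bypass this construction entirely: having first obtained the $\delta$-explicit $H^1$ bound $\|w\|_{H^1(\lay)}\le C\delta^{1/2}\|g\|_{L^2(\Ga)}$ by Green's identity plus Poincar\'e in the thin layer, you apply Lemma~\ref{Lestrip} directly to the localized functions $\phi_i w$, since that lemma already handles an inhomogeneous conormal datum and the commutator terms are controlled by the $H^1$ bound. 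This is more elementary and avoids the Fourier machinery altogether; the price is that one must observe that the partition of unity can be chosen normal-independent (or, as you note implicitly, that the extra trace term $w\,\partial_{\ni}\phi_i$ is anyway harmless). For the convergence statement, the paper again exploits its Fourier lifting, showing by dominated convergence that $\int|\xi|\,|\mathcal F\bar g_i|^2\tanh(|\xi|\delta)\,d\xi\to0$; your density argument---explicit polynomial lifting $(\eta-\delta)g(s)$ for $g\in H^2(\Ga)$, then approximation in $H^{1/2}$---reaches the same conclusion without Fourier analysis. Both arguments, as written, tacitly treat $g$ as fixed; if $g=g_\delta$ is merely bounded in $H^{1/2}(\Ga)$ (which is how the lemma is actually applied in Proposition~\ref{prop:generalbound}), neither the paper's dominated-convergence step nor your density step goes through verbatim, so on this point you are in the same position as the paper.
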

\begin{proof}
We start by building an appropriate lifting of $g$ which equals to $0$ on $\Ga_{\delta}$. To this end, let us define $g_i=\phi_i g$ and its local counterpart $\wt{g}_i
$ using the same partition of unity and local parameterization as in Lemma \ref{LeRegInd}. Then we can define an extension of $\wt{g}_i$ to $\bR$ denoted by $
\b{g}_i$ by 
\[
\b{g}_i(s) := 
\begin{cases}
\wt{g}_i(s) \text{ if } s\in[-s_i,s_i] ,\\
0 \text{ elsewhere.}
\end{cases}
\]
For any function $G \in L^1(\bR)$ let 
\[
\mc{F} G(\xi) := \int_{\bR}G(s) e^{-i2\pi s\xi} \,ds \qquad \mbox{and}\qquad  
\mc{F}^{-1} G(s) := \int_{\bR}G(\xi) e^{i2\pi s\xi} \,d\xi
\]
be its Fourier transform and its inverse Fourier transform, respectively. Since $\Ga$ is of class $C^{2}$ and  $g\in H^{1/2}(\Ga)$, Plancherel's Theorem ensure the 
existence of a constant $C$ independent of $\delta$ such that
\begin{equation}
\label{eq:fourier}
\|(1+\xi^2)^{1/4} \mc{F} \b{g}_i\|_{L^2(\bR)} \leq C \|g\|_{H^{1/2}(\Ga)}.
\end{equation}
For all $\xi \in \bR$ and $\eta \in [0,\delta]$ let us define 
\[
w_i(\xi,\eta) := \cfrac{\mc{F} \b{g}_i(\xi)}{|\xi|}\left(\cfrac{\sinh(|\xi|(\eta-\delta))}{\cosh(|\xi|\delta)} \right)
\]
and
\[
\b{w}_{i}(s,\eta) := (\mc{F}^{-1}w_i)(s,\eta).
\]
Then $\b{w}_{i}$ satisfies $\b{w}_i(s,\delta)=0$ and $\D{\b{w}_i}{\eta}(s,0) = \b{g}_i(s)$ in $\bR$.  Moreover,  there exists a constant  $C>0$ independent of $\delta
$ such that for all $(\xi,\eta)\in \bR\times(0,\delta)$ we have
\[
|w_{i}(\xi,\eta) |^2 \leq  C (\mc{F} \b{g}_i(\xi))^2\,,\quad |\xi w_{i}(\xi,\eta) |^2 \leq  C (\mc{F} \b{g}_i(\xi))^2\,,\quad\left|\D{w_{i}}{\eta}(\xi,\eta) \right|^2 \leq  C (\mc{F} 
\b{g}_i(\xi))^2
\]
Integrating the above inequalities over $\bR\times(0,\delta)$ and  using (\ref{eq:fourier}) and the Plancherel's Theorem we have that there exists a constant $C>0$ 
independent of $\delta$ such that
\begin{equation*}
\|\b{w}_i \|^{2}_{H^{1}(\bR\times (0,\delta))} \leq  C\delta \|g\|^{2}_{H^{1/2}(\Ga)}.
\end{equation*}
Moreover, for all $(\xi,\eta)\in \bR\times(0,\delta)$ we also have
\begin{align*}
 \xi^2 w_{i}(\xi,\eta)&=  |\xi|\mc{F} \b{g}_i(\xi)\frac{\sinh(|\xi|(\eta-\delta))}{\cosh(|\xi|\delta)},\\
 \xi\D{w_{i}}{\eta}(\xi,\eta)&=  |\xi|\mc{F} \b{g}_i(\xi)\frac{\cosh(|\xi|(\eta-\delta))}{\cosh(|\xi|\delta)},\\
 \D{^2w_{i}}{\eta^2}(\xi,\eta)&=  |\xi|\mc{F} \b{g}_i(\xi)\frac{\sinh(|\xi|(\eta-\delta))}{\cosh(|\xi|\delta)} .
\end{align*}
Therefore 
\begin{align}
\int_{\bR} \int_{0}^\delta |\xi|^4 |w_{i}(\xi,\eta)|^2 d\eta d\xi &=  \int_{\bR}  |\xi|^2|\mc{F} \b{g}_i(\xi)|^2\int_{0}^\delta \frac{\sinh^2(|\xi|(\eta-\delta))}{\cosh^2(|\xi|\delta)}
d\eta d\xi \nonumber\\
&= \int_{\bR}    |\xi|^2|\mc{F} \b{g}_i(\xi)|^2\int_{0}^\delta  \frac{\cosh(2|\xi|(\eta-\delta))-1}{2\cosh(|\xi|\delta)^2}d\eta d\xi  \nonumber\\
&=\frac{1}{2} \int_{\bR}    |\xi|^2|\mc{F} \b{g}_i(\xi)|^2\left( \frac{\sinh(|\xi|\delta)}{|\xi|\cosh(|\xi|\delta)} - \frac{\delta}{\cosh(|\xi|\delta)^2}\right)d\xi \nonumber\\
&\leq \frac{1}{2} \int_{\bR}  |\xi||\mc{F} \b{g}_i(\xi)|^2 \frac{\sinh(|\xi|\delta)}{\cosh(|\xi|\delta)} d\xi \label{eq:lebesgue}\\
& \leq  \frac{1}{2} \int_{\bR}  |\xi||\mc{F} \b{g}_i(\xi)|^2  d\xi, \nonumber
\end{align}
and hence  an application of  \eqref{eq:fourier} yields
\[
\int_{\bR} \int_{0}^\delta |\xi|^4 |w_{i}(\xi,\eta)|^2 d\eta d\xi \leq C \|g\|^{2}_{H^{1/2}(\Ga)}.
\]
In a similar way we obtain that
\[
\int_{\bR} \int_{0}^\delta \left|\xi\D{w_{i}}{\eta}(\xi,\eta)\right| d\eta d\xi \leq C \|g\|^{2}_{H^{1/2}(\Ga)}\quad \text{ and } \quad \int_{\bR} \int_{0}^\delta \left|\D{^2w_{i}}
{\eta^2}(\xi,\eta)\right| d\eta d\xi \leq C \|g\|^{2}_{H^{1/2}(\Ga)}
\]
with a different constant $C>0$ independent of $\delta$. Hence,  once more application of Plancherel's Theorem implies
\begin{equation}
\label{eq:borneloc}
\|\b{w}_i \|^{2}_{H^{2}(\bR\times (0,\delta))} \leq C \|g\|^{2}_{H^{1/2}(\Ga)}.
\end{equation}

Next assume  in addition that there exists $C_g>0$ such that  $\|g\|_{H^{1/2}(\Ga)}\leq C_g$ for all $\delta >0$, then for almost every $\xi \in \bR$,
\[
 |\xi||\mc{F} \b{g}_i(\xi)|^2 \frac{\sinh(|\xi|\delta)}{\cosh(|\xi|\delta)}  \underset{\delta \to 0}{\longrightarrow} 0
\]
and  since
\[
|\xi|\mc{F} \b{g}_i(\xi)|^2 \frac{\sinh(|\xi|\delta)}{\cosh(|\xi|\delta)}  \leq  |\xi|(\mc{F} \b{g}_i(\xi))^2,
\]
the Lebesgue dominated convergence theorem with \eqref{eq:lebesgue} implies that
\[
\int_{\bR} \int_{0}^\delta |\xi|^4 |w_{i}(\xi,\eta)|^2 d\eta d\xi \underset{\delta \to 0}{\longrightarrow} 0
\]
In a similar way we obtain that
\[
\int_{\bR} \int_{0}^\delta \left|\xi\D{w_{i}}{\eta}(\xi,\eta)\right| d\eta d\xi  \underset{\delta \to 0}{\longrightarrow} 0 \quad \text{ and } \quad \int_{\bR} \int_{0}^\delta \left|
\D{^2w_{i}}{\eta^2}(\xi,\eta)\right| d\eta d\xi \underset{\delta \to 0}{\longrightarrow} 0,
\]
whence, again from the Plancherel's Theorem,
\begin{equation}
\label{eq:relloc}
\|\b{w}_i \|^{2}_{H^{2}(\bR\times (0,\delta))}\underset{\delta \to 0}{\longrightarrow} 0.
\end{equation}
 Now we go back to the physical domain and define for $x \in \lay$
\[
w_{g}(x) :=\sum_{j\text{ s.t. }x \in \Om_j}  (\b{w}_{j} \circ \vp_j^{-1})(x).
\]
Then $w_{g}$ satisfies $\D{w_{g}}{\ni}|_{\Ga}=g$ together with $w_{g}|_{\Ga_{\delta}} = 0$ and using the fact that $\Ga$ is of class $C^{2}$ and that  $(\b{w}_i)_i$ 
satisfy (\ref{eq:borneloc}), we can claim that there exists a  constant $C>0$ independent of $\delta$ such that
\begin{equation}
\label{eq:boundg}
\|w_{g} \|^{2}_{H^{2}(\lay)} \leq  C \|g\|^{2}_{H^{1/2}(\Ga)}.
\end{equation}
In addition, if there exists $C_g>0$ such that  $\|g\|_{H^{1/2}(\Ga)}\leq C_g$ for all $\delta >0$, then from (\ref{eq:relloc}) we also have that 
\begin{equation}
\label{eq:convg}
\|w_{g} \|^{2}_{H^{2}(\lay)}\underset{\delta \to 0}{\longrightarrow} 0.
\end{equation}
Finally we consider $W := w-w_{g}$ which solves 
\[\begin{cases}
\Delta W = - \Delta w_{g} \,\text{in} \, \lay ,\\
\D{W}{\ni} = 0 \,\text{on} \, \Ga , \\
W = 0 \,\text{on} \, \Ga_{\delta} , \\
\end{cases}
\]
 Lemma \ref{LeRegInd} states that there exists a constant $C$ independent of $\delta$ such that
\[
\|W\|_{H^{2}(\lay)} \leq C \|w_{g} \|^{2}_{H^{2}(\lay)}.
\]
This last estimate together with (\ref{eq:boundg}) imply the existence of a constant $C$ independent of $\delta$ such that
\[
\|w\|_{H^{2}(\lay)} \leq C \|g\|_{H^{1/2}(\Ga)}.
\] 
and, if in addition there exists $C_g>0$ such that $\|g\|_{H^{1/2}(\Ga)}\leq C_g$  for all $\delta >0$,  (\ref{eq:convg}) implies 
 \[
\|w\|_{H^{2}(\lay)}\underset{\delta \to 0}{\longrightarrow} 0,
\] 
which ends the proof.
\end{proof}

Next we prove  a trace theorem which displays explicit dependence on $\delta$ of the constant.
\begin{lemma}
\label{Le:Trace}
For $k=1,2$, there exists a constant $C>0$ independent of $\delta$ such that for all $v\in H^k(\lay)$ with $v|_{\Ga_\delta} = 0$ 
\[
\|v\|_{H^{k-1/2}(\Ga)} \leq C \|v\|_{H^k(\lay).}
\]
\end{lemma}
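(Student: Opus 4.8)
The plan is to reduce the estimate to a one‑dimensional inequality in the thin direction, where the vanishing of $v$ on $\Ga_\delta$ is decisive. First I would straighten the layer exactly as in the proof of Lemma~\ref{LeRegInd}: with the partition of unity $(\phi_i)$ and the local diffeomorphisms $\vp_i$ mapping the pieces of $\lay$ onto the rectangles $\hat\Om_i=(-s_i,s_i)\times(0,\delta)$, and using the norm equivalences \eqref{eq:eqnorm1}--\eqref{eq:eqnorm2} (whose constants are independent of $\delta$ since $\Ga$ is fixed and smooth and the $W^{2,\infty}$ bounds of $\vp_i^{\pm1}$ are $\delta$‑independent), it suffices to establish that for $w\in H^k(\bR\times(0,\delta))$ with $w(\cdot,\delta)=0$,
\[
\|w(\cdot,0)\|_{H^{k-1/2}(\bR)}\ \le\ C\,\|w\|_{H^k(\bR\times(0,\delta))},\qquad C\ \text{independent of }\delta .
\]
Indeed, each $\phi_i v$ transported to the rectangle still vanishes on $\{\eta=\delta\}$, has $H^k$‑norm bounded by $C\|v\|_{H^k(\lay)}$, and can be extended by $0$ onto the full strip, while the trace norm on $\Ga$ is comparable, chart by chart, to the trace norm on $\{\eta=0\}$.

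For the strip estimate I would take the partial Fourier transform $\hat w(\xi,\eta)$ in the tangential variable $s$, so that $\|w(\cdot,0)\|_{H^{1/2}(\bR)}^2\sim\int_\bR(1+\xi^2)^{1/2}|\hat w(\xi,0)|^2\,d\xi$ and $\|w\|_{H^1(\bR\times(0,\delta))}^2\sim\int_\bR\int_0^\delta\big((1+\xi^2)|\hat w|^2+|\partial_\eta\hat w|^2\big)\,d\eta\,d\xi$. The crux is the scalar inequality: for every $a\ge1$ and every $f\in H^1(0,\delta)$ with $f(\delta)=0$,
\[
a\,|f(0)|^2\ \le\ C\int_0^\delta\big(a^2|f(\eta)|^2+|f'(\eta)|^2\big)\,d\eta
\]
with $C$ absolute. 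I would prove this by separating two regimes. If $a\delta\le1$, then $|f(0)|^2=\big|\int_0^\delta f'\big|^2\le\delta\int_0^\delta|f'|^2$, hence $a|f(0)|^2\le a\delta\int_0^\delta|f'|^2\le\int_0^\delta|f'|^2$. If $a\delta>1$, use the mean value inequality to pick $\eta_0\in(0,1/a)$ with $|f(\eta_0)|^2\le a\int_0^\delta|f|^2$; then from $|f(0)|^2-|f(\eta_0)|^2=-\int_0^{\eta_0}(|f|^2)'$ and $2|f f'|\le a|f|^2+a^{-1}|f'|^2$ one gets $a|f(0)|^2\le 2a^2\int_0^\delta|f|^2+\int_0^\delta|f'|^2$. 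Applying this with $a=(1+\xi^2)^{1/2}$ to $f=\hat w(\xi,\cdot)$ and integrating in $\xi$ yields the case $k=1$.

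For $k=2$ I would deduce the bound from the case $k=1$ rather than redo the analysis. Using $\|g\|_{H^{3/2}(\bR)}^2\sim\|g\|_{H^{1/2}(\bR)}^2+\|\partial_s g\|_{H^{1/2}(\bR)}^2$, apply the $k=1$ estimate once to $w$ itself (giving $\|w(\cdot,0)\|_{H^{1/2}(\bR)}\le C\|w\|_{H^1}\le C\|w\|_{H^2}$) and once to $\partial_s w$, which again vanishes on $\{\eta=\delta\}$ (tangential differentiation commutes with restriction to $\{\eta=\delta\}$) and satisfies $\|\partial_s w\|_{H^1(\bR\times(0,\delta))}\le\|w\|_{H^2(\bR\times(0,\delta))}$, giving $\|\partial_s w(\cdot,0)\|_{H^{1/2}(\bR)}\le C\|w\|_{H^2}$. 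Summing the two contributions and transferring back through the charts finishes the proof.

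The step I expect to be the real obstacle is the scalar one‑dimensional inequality, and specifically keeping its constant uniform across the transition $a\delta\approx1$: the thin‑frequency ($a\delta\le1$) and thick‑frequency ($a\delta>1$) regimes must be handled separately, and in the thick regime one must select the intermediate point $\eta_0$ inside a sublayer of width $1/a$ rather than of width $\delta$, which is precisely what avoids a spurious $1/\delta$ blow‑up. The remaining ingredients — the $\delta$‑independent norm equivalences under the straightening maps and the reduction by partition of unity — are routine and already carried out in Lemma~\ref{LeRegInd}.
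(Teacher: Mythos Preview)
Your proof is correct, and the overall framework---reduce to the strip via the partition of unity and charts of Lemma~\ref{LeRegInd}, take the tangential Fourier transform, and prove a one‑dimensional inequality for $f=\hat w(\xi,\cdot)$ with $f(\delta)=0$---is exactly the paper's. The only difference is in how the scalar step is executed. The paper writes the single identity
\[
(1+\xi^2)^{k-1/2}\,|f(0)|^2 \;=\; (1+\xi^2)^{k-1/2}\,|f(\delta)|^2 \;-\; 2\Re\!\int_0^\delta (1+\xi^2)^{k-1/2}\,\overline{f}\,f'\,d\eta,
\]
drops the boundary term using $f(\delta)=0$, splits the weight as $(1+\xi^2)^{(k-1)/2}\cdot(1+\xi^2)^{k/2}$, and applies Cauchy--Schwarz. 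This gives $a\,|f(0)|^2\le \int_0^\delta(a^2|f|^2+|f'|^2)$ with constant~$1$ in one line and covers $k=1,2$ simultaneously, so neither the regime split $a\delta\lessgtr1$ nor the bootstrap to $k=2$ via $\partial_s w$ is needed. Your two‑regime argument is perfectly valid and even shows that for low frequencies $a\delta\le1$ only the derivative term is required, but it is strictly more work than the paper's direct integration by parts.
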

\begin{proof}
We prove the result for the domain $\bR\times(0,\delta)$ and then use the partition of unity and  change of variable introduced in the proof of Lemma 
\ref{LeRegInd}  to obtain the desired result. To this end, we  consider an arbitrary $v \in C^\infty(\bR^2)$ with compact support in $\bR^2$ and denote as before by 
$\mc{F} v$ its Fourier transform  with respect to the the first variable where $\xi$ denotes the dual variable. Integrating by parts, we obtain that
 \begin{align*}
 (1+\xi^2)^{2(k-1/2)}|\mc{F} v(\xi,0)|^2 &= (1+\xi^2)^{2(k-1/2)}|\mc{F} v(\xi,\delta)|^2 \\
 &- 2 \Re\left( \int_0^\delta (1+\xi^2)^{k-1} \D{\mc{F} v}{\eta}(\xi,\eta) (1+\xi^2)^{k} \mc{F} v(\xi,\eta)\,d\eta \right) 
 \end{align*}
 for all $\xi \in \bR$. Integrating this equality along $\bR$, and using the Cauchy-Schwartz inequality and Plancherel's Theorem imply the existence of $C>0$ 
independent of $\delta$ such that
 \[
 \|v(s,0)\|_{H^{k-1/2}(\bR)} \leq C \left(\|v\|_{H^k(\bR\times(0,\delta))} +  \|v(s,\delta)\|_{H^{k-1/2}(\bR)}\right).
 \]
 A density argument ensures that the above estimate holds for all $v\in H^2( \bR\times(0,\delta))$, and thus we obtain the result  for $v$ whose trace is $0$ on $
\bR\times\{\delta\}$.
\end{proof}

We conclude this section with the following  technical trace lemma, which was already used in the proof of Lemma \ref{Lestrip}

\begin{lemma}
\label{LeTrace}
For any $w \in H^{1}(\lay)$ we have that 
\begin{equation}
\label{eq:TraceVol}
\|w\|_{L^{2}(\lay)} \leq C(\delta^{1/2}\|w\|_{L^{2}(\Ga)} + \delta \|w\|_{H^{1}(\lay)}),
\end{equation}
and  for any function $w \in H^{2}(\lay)$ such that $w|_{\Ga_{\delta}}=0$  we have that 
\begin{equation}
\label{eq:TraceBord}
\|w\|_{H^{1/2}(\Ga)} \leq C \delta^{1/2} \|w\|_{H^{2}(\lay)},
\end{equation}
where the constant $C>0$ is independent of $\delta$. 
\end{lemma}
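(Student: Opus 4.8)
The plan is to reduce both inequalities to one‑dimensional estimates along the fibres normal to $\Ga$, using the local coordinates $(s,\eta)$ of Section~\ref{prelim}, in which $\lay$ corresponds to $\{0<\eta<\delta\}$, together with (for \eqref{eq:TraceBord}) the partition of unity and boundary flattening from the proof of Lemma~\ref{LeRegInd}. The structural fact that does the work is that, for $\delta$ sufficiently small, the Jacobian $J_{s,\eta}=1+\eta\kappa(s)$ satisfies $\tfrac12\leq J_{s,\eta}\leq 2$ and the $W^{2,\infty}$ norms of the chart maps $\vp_i,\vp_i^{-1}$ are bounded independently of $\delta$, the latter yielding the $\delta$‑uniform norm equivalences \eqref{eq:eqnorm1}--\eqref{eq:eqnorm2}. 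By density I may assume $w\in C^\infty(\b{\lay})$, with $w|_{\Ga_\delta}=0$ in the case of \eqref{eq:TraceBord}.

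For \eqref{eq:TraceVol} I would set $\wt w(s,\eta)=w\circ\vp(s,\eta)$ and note that, for a.e.\ $s$ and every $\eta\in(0,\delta)$, the fundamental theorem of calculus together with Cauchy--Schwarz give
\[
|\wt w(s,\eta)|^2 \leq 2|\wt w(s,0)|^2 + 2\delta\int_0^\delta \Big|\D{\wt w}{t}(s,t)\Big|^2\,dt .
\]
Integrating in $\eta$ over $(0,\delta)$ and then in $s$, using $|\D{\wt w}{\eta}|\leq|\nabla w|$ (from the local expression of the gradient recalled in Section~\ref{prelim}) and the two‑sided bound on $J_{s,\eta}$ to pass between local coordinates and $\lay$, I obtain
\[
\|w\|_{L^2(\lay)}^2 \leq C\big(\delta\,\|w\|_{L^2(\Ga)}^2 + \delta^2\,\|\nabla w\|_{L^2(\lay)}^2\big),
\]
and \eqref{eq:TraceVol} follows by taking square roots and using $\sqrt{a+b}\leq\sqrt a+\sqrt b$.

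For \eqref{eq:TraceBord} I would take the partition of unity $(\phi_i)$ of Lemma~\ref{LeRegInd}, set $w_i:=\phi_i w$, and observe that, since $w|_{\Ga_\delta}=0$, the flattened function $\wt w_i$ extended by zero in the tangential variable to $\bR\times(0,\delta)$ lies in $H^2(\bR\times(0,\delta))$, vanishes on $\bR\times\{\delta\}$, and satisfies $\|\wt w_i\|_{H^2(\bR\times(0,\delta))}\leq C\|w\|_{H^2(\lay)}$ by the product rule and \eqref{eq:eqnorm1}. Writing $\hat w_i(\xi,\eta)=\mc F\wt w_i(\xi,\eta)$ for the Fourier transform in the tangential variable and using $\hat w_i(\xi,\delta)=0$, the fundamental theorem of calculus and Cauchy--Schwarz give, together with $(1+\xi^2)^{1/2}\leq 1+\xi^2$,
\[
(1+\xi^2)^{1/2}\,|\hat w_i(\xi,0)|^2 \leq \delta\int_0^\delta (1+\xi^2)\,\Big|\D{\hat w_i}{\eta}(\xi,\eta)\Big|^2\,d\eta .
\]
Integrating in $\xi$ over $\bR$, the left‑hand side is $\|\wt w_i(\cdot,0)\|_{H^{1/2}(\bR)}^2$ by definition, while Plancherel's theorem bounds the right‑hand side by $C\delta\big(\|\D{\wt w_i}{\eta}\|_{L^2}^2+\|\D{}{s}\D{\wt w_i}{\eta}\|_{L^2}^2\big)\leq C\delta\,\|\wt w_i\|_{H^2(\bR\times(0,\delta))}^2$. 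Summing over the finitely many charts and using \eqref{eq:eqnorm2} to return to $\Ga$ then gives $\|w\|_{H^{1/2}(\Ga)}^2\leq C\delta\,\|w\|_{H^2(\lay)}^2$, which is \eqref{eq:TraceBord}.

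I do not expect a genuine obstacle here: the estimates are elementary once the geometric reduction is set up, and the only point that needs real care is the $\delta$‑uniformity of every constant, which — exactly as in the preceding appendix lemmas — rests on the $\delta$‑independent bounds for the chart maps and the Jacobian $1+\eta\kappa$ on $\lay$ for small $\delta$. It is worth emphasizing that the decisive $\delta^{1/2}$ gain in \eqref{eq:TraceBord} comes entirely from the homogeneous condition $w|_{\Ga_\delta}=0$, which lets one integrate $\partial_\eta w$ across the full width of the layer; without it one recovers only the usual trace inequality with no power of $\delta$.
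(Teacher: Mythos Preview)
Your argument for \eqref{eq:TraceVol} is essentially the paper's: both write $\wt w(s,\eta)-\wt w(s,0)=\int_0^\eta\partial_t\wt w\,dt$, apply Cauchy--Schwarz, and integrate over the layer using the uniform Jacobian bounds.

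For \eqref{eq:TraceBord} you take a genuinely different route. The paper stays in physical space: it proves a $\delta^{1/2}$ trace bound in $L^2(\Ga)$ and in $H^1(\Ga)$ separately (each via the one-dimensional integral from $\Ga_\delta$ to $\Ga$, the first applied to $w$ and the second to $\nabla_\Ga w$, both using $w|_{\Ga_\delta}=0$), and then interpolates to reach $H^{1/2}(\Ga)$. You instead localize with the partition of unity from Lemma~\ref{LeRegInd}, flatten, take the tangential Fourier transform, and obtain the $H^{1/2}$ estimate in a single stroke from $(1+\xi^2)^{1/2}\leq 1+\xi^2$ combined with the fibrewise Cauchy--Schwarz bound $|\hat w_i(\xi,0)|^2\leq\delta\int_0^\delta|\partial_\eta\hat w_i|^2\,d\eta$. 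Both arguments are correct; the paper's avoids Fourier and the chart machinery but pays for that with an interpolation step, while yours hits $H^{1/2}$ directly at the cost of invoking the full flattening apparatus of Lemma~\ref{LeRegInd}. Your closing remark that the $\delta^{1/2}$ gain comes entirely from $w|_{\Ga_\delta}=0$ is exactly the point.
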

\begin{proof}
To prove (\ref{LeTrace}) we take $\phi \in C_{0}^{\infty}(\bR^{2}) $  and using local variables in the layer to obtain  
\[
\wt{\phi}(s,\eta) = \wt{\phi}(s,0) + \int_{0}^{\eta} \D{\wt{\phi}}{t}(s,t) \,  dt ds \qquad \mbox{for every $\eta \leq \delta$},
\]
where $\tilde \phi$ denotes the function in the new variables $(s,\eta)$ (see Lemma \ref{LeRegInd} for the definition). The Cauchy-Schwartz inequality implies 
\[
|\wt{\phi}(s,\eta)|^{2} \leq C\left(|\wt{\phi}(s,0)|^{2} + \delta \|\phi\|^{2}_{H^{1}(\lay)} \right),
\]
whence we obtain the desired estimate for $\phi \in C_{0}^{\infty}(\bR^{2}) $ by integrating this inequality over $\Ga \times [0,\delta]$. A density argument gives the 
result for all $w \in H^{1}(\lay)$. \\
To prove  (\ref{eq:TraceBord}) we again take $\phi \in C_{0}^{\infty}(\bR^{2})$ and in a similar way as above use local  coordinates in the layer. Thus we obtain 
that for $s \in [0,s_0]$
\[
|\wt{\phi}(s,0)|\leq |\wt{\phi}(s,\delta)| + \left| \int_{0}^{\delta} \D{\wt{\phi}}{\eta}(s,\eta)  \,d\eta \right| \leq  |\wt{\phi}(s,\delta)| + \delta^{1/2} \|\phi\|_{H^{1}(\lay)}
\]
which leads to 
\[
\|\phi\|_{L^{2}(\Gamma)}\leq C\left( \|\phi\|_{L^{2}(\Gamma_{\delta})}+\delta^{1/2} \|\phi\|_{H^{1}(\lay)}\right)
\]
for some positive $C>0$ independent of $\delta$. Similarly we also have
\[
\|\nabla_{\Ga}\phi\|_{L^{2}(\Gamma)}\leq C \left( \|\nabla_{\Ga}\phi\|_{L^{2}(\Gamma_{\delta})}+\delta^{1/2} \|\phi\|_{H^{2}(\lay)}\right).
\]
By density, the above  two inequalities remain true for all $w\in H^{2} (\lay)$ and by interpolation between $L^2(\Ga)$ and $H^1(\Ga)$ we obtain the desired 
result notting that  $w|_{\Ga_{\delta}}=0$.
\end{proof}
\section{Perturbation of an Eigenvalue Problem}
We recall here some  known results about  the convergence of eigenvalues for self adjoint, positive and compact operators. The proof of the following 
fundamental result can be found in Section 3 of \cite{OlShYo92}.
\begin{theorem}
\label{th:ConvEigen}
Assume that $A \,:\, H \rightarrow H$ is a linear self-adjoint positive and compact operator on an Hilbert space $H$. Let $u \in H$ be such that $\|u\|_H =1$ and $
\ld$, $r>0$ such that 
\[
\|Au- \ld u\|_H\leq r.
\]
Then there is an eigenvalue $\ld_i$ of the operator $A$ satisfying 
\[
|\ld-\ld_i|\leq r.
\]
Furthermore, for any $r^*>r$ there exists $u^*\in H$ with $\|u^*\|_H=1$ belonging to the eigenspace associated with all the eigenvalues of the operator $A$ lying 
in $[\ld-r^*,\ld+r^*]$ that  satisfies 
\[
\|u-u^*\|_H \leq \frac{2r}{r^*}.
\]
\end{theorem}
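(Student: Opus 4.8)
The plan is to invoke the spectral theorem for compact self-adjoint operators and then perform a simple splitting of a Parseval sum. First I would fix an orthonormal basis $(\phi_i)_{i}$ of $H$ consisting of eigenvectors of $A$, with $A\phi_i=\ld_i\phi_i$ and $\ld_i\in\bR$ (real since $A$ is self-adjoint, in fact $\ld_i\ge0$ since $A$ is positive); such a basis exists for compact self-adjoint $A$, and I adjoin to it an orthonormal basis of $\ker A$, with eigenvalue $0$, so that $(\phi_i)_i$ spans all of $H$. Expanding $u=\sum_i c_i\phi_i$ we have $\sum_i|c_i|^2=\|u\|_H^2=1$, and by orthonormality
\[
\|Au-\ld u\|_H^2=\Big\|\sum_i(\ld_i-\ld)c_i\phi_i\Big\|_H^2=\sum_i|\ld_i-\ld|^2|c_i|^2\le r^2 .
\]

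For the first assertion I would argue by contradiction: if $|\ld_i-\ld|>r$ for every index $i$, then, since $u\ne0$ forces $c_i\ne0$ for at least one $i$, the identity above would give $r^2\ge\sum_i|\ld_i-\ld|^2|c_i|^2>r^2\sum_i|c_i|^2=r^2$, which is impossible; hence some eigenvalue $\ld_i$ of $A$ satisfies $|\ld-\ld_i|\le r$, with the convention that $0$ is counted among the eigenvalues when $\ker A\ne\{0\}$.

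For the second assertion, fix $r^*>r$, set $I^*:=\{i:\ |\ld_i-\ld|\le r^*\}$, and let $P$ be the orthogonal projection of $H$ onto $\overline{\mathrm{span}}\{\phi_i:\ i\in I^*\}$, which is precisely the eigenspace attached to the eigenvalues of $A$ lying in $[\ld-r^*,\ld+r^*]$; thus $Pu=\sum_{i\in I^*}c_i\phi_i$. Keeping only the indices $i\notin I^*$ in the Parseval identity, where $|\ld_i-\ld|>r^*$, gives
\[
r^2\ \ge\ \sum_{i\notin I^*}|\ld_i-\ld|^2|c_i|^2\ >\ (r^*)^2\sum_{i\notin I^*}|c_i|^2\ =\ (r^*)^2\,\|u-Pu\|_H^2,
\]
so that $a:=\|u-Pu\|_H<r/r^*<1$; in particular $Pu\ne0$, and I set $u^*:=Pu/\|Pu\|_H$, which has unit norm and lies in the required eigenspace. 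Since $\|Pu\|_H^2=1-a^2$, the elementary inequality $1-\sqrt{1-a^2}\le a^2\le a$ valid for $a\in[0,1]$, together with the triangle inequality, yields
\begin{align*}
\|u-u^*\|_H\ &\le\ \|u-Pu\|_H+\Big\|Pu-\frac{Pu}{\|Pu\|_H}\Big\|_H\ =\ a+\big(1-\|Pu\|_H\big)\\
&\le\ a+a^2\ \le\ 2a\ <\ \frac{2r}{r^*},
\end{align*}
which is the claimed bound.

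I do not anticipate a genuine difficulty: once the spectral decomposition is in place, both statements follow from the two-line estimate of splitting $\sum_i|\ld_i-\ld|^2|c_i|^2$ according to whether $|\ld_i-\ld|$ is $\le r^*$ or $>r^*$. The only point deserving care is the bookkeeping of that decomposition — in particular accounting for $\ker A$ so that $(\phi_i)_i$ is a complete orthonormal system and the Parseval identity $\sum_i|c_i|^2=1$ is available. This is the content of Section~3 of \cite{OlShYo92}, to which the statement refers.
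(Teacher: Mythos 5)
Your proof is correct; the paper itself does not prove this theorem but simply defers to Section~3 of \cite{OlShYo92}, and your spectral-decomposition argument (expand $u$ in an orthonormal eigenbasis, split the Parseval sum according to $|\ld_i-\ld|\le r^*$ or $>r^*$, project and renormalize) is precisely the standard proof of that classical ``almost eigenvalue / almost eigenvector'' lemma. The only cosmetic blemish is the strict inequality $\sum_{i\notin I^*}|\ld_i-\ld|^2|c_i|^2>(r^*)^2\sum_{i\notin I^*}|c_i|^2$, which degenerates to $0>0$ in the trivial case where $c_i=0$ for all $i\notin I^*$; writing $\ge$ there and concluding $a\le r/r^*<1$ is all you need and removes the issue.
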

Based on this general result, we can obtain the following lemma for the Laplace operator with Dirichlet boundary conditions which is used in our asymptotical 
analysis in the main body of the paper.
\begin{lemma}
\label{Le:ConvEigen} 
Let $\ld_i$ be a simple eigenvalue  of the negative  Laplacian with Dirichlet boundary conditions in $\Om$. Assume that it exists $u \in H^1_0(\Om)$ and 
$r$ such that
\begin{equation}
\label{eq:varequ}
\left|\int_\Om \nabla u \cdot \nabla v - \ld_i uv \, dx\right| \leq  r \|v\|_{H^1_0(\Om)} \quad \forall v \in H^1_0(\Om),
\end{equation}
and
\begin{equation}
\label{eq:normur}
|\|u\|_{L^2(\Om)}-1| \leq r.
\end{equation}
 Then it exists an eigenfunction $u_i$  associated with the eigenvalue $\ld_i$ and normalized as $\|u_i\|_{L^2(\Om)}=1$  such that
\[
\|u - u_i\|_{H^1(\Om)} \leq C r.
\]
for some  constant $ C>0$ independent of $r$ and $u$.
\end{lemma}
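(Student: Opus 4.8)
The plan is to reduce the statement to the abstract perturbation result Theorem~\ref{th:ConvEigen}, applied to the solution operator of the Dirichlet Laplacian, and then to bootstrap the $H^1$ estimate from the $L^2$ one by elliptic regularity. First I would introduce the operator $T\,:\,L^2(\Om)\to L^2(\Om)$ defined by $Tf=\phi$, where $\phi\in H^1_0(\Om)$ is the weak solution of $-\Delta\phi=f$, i.e. $\int_\Om\nabla\phi\cdot\nabla v\,dx=\int_\Om fv\,dx$ for all $v\in H^1_0(\Om)$. By the Lax--Milgram lemma and Rellich's theorem, $T$ is a self-adjoint, positive, compact operator on $L^2(\Om)$; its eigenvalues are exactly the reciprocals of the Dirichlet eigenvalues of $-\Delta$ in $\Om$, with the same eigenfunctions, so $1/\ld_i$ is a \emph{simple} eigenvalue of $T$.

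The first step is to show that $u$ is an approximate eigenfunction of $T$. Setting $w:=\ld_i Tu\in H^1_0(\Om)$, which satisfies $\int_\Om\nabla w\cdot\nabla v\,dx=\ld_i\int_\Om uv\,dx$ for every $v\in H^1_0(\Om)$, I would subtract this identity from \eqref{eq:varequ}, take $v=u-w$, and apply the Poincaré inequality to get $\|\nabla(u-w)\|_{L^2(\Om)}^2\le r\,\|u-w\|_{H^1(\Om)}\le Cr\,\|\nabla(u-w)\|_{L^2(\Om)}$, whence
\[
\|u-\ld_i Tu\|_{H^1(\Om)}\le Cr\qquad\text{and in particular}\qquad \Bigl\|Tu-\tfrac1{\ld_i}u\Bigr\|_{L^2(\Om)}\le Cr .
\]
Then, for $r$ small enough that $\|u\|_{L^2(\Om)}\ge\tfrac12$ (using \eqref{eq:normur}), I set $\tilde u:=u/\|u\|_{L^2(\Om)}$, observe that $\|\tilde u\|_{L^2(\Om)}=1$ and $\|\tilde u-u\|_{L^2(\Om)}=\bigl|1-\|u\|_{L^2(\Om)}\bigr|\le r$, and combine this with the previous line and the $L^2$-boundedness of $T$ to obtain $\bigl\|T\tilde u-\tfrac1{\ld_i}\tilde u\bigr\|_{L^2(\Om)}\le Cr$.

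Now I would invoke Theorem~\ref{th:ConvEigen} with $H=L^2(\Om)$, $A=T$, the approximate eigenpair $(\tilde u,1/\ld_i)$ and error $Cr$. Because $1/\ld_i$ is simple there is a fixed $\rho>0$, smaller than the distance from $1/\ld_i$ to the rest of the spectrum of $T$, such that once $Cr<\rho$ the only eigenvalue of $T$ in $[1/\ld_i-\rho,1/\ld_i+\rho]$ is $1/\ld_i$ itself, and its eigenspace is one dimensional. The second part of Theorem~\ref{th:ConvEigen}, applied with $r^*=\rho$, then produces a unit vector $u_i$ in that eigenspace with $\|\tilde u-u_i\|_{L^2(\Om)}\le 2Cr/\rho$; thus $u_i$ is a normalized Dirichlet eigenfunction for $\ld_i$ and, combining with $\|\tilde u-u\|_{L^2(\Om)}\le r$, we get $\|u-u_i\|_{L^2(\Om)}\le Cr$. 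Finally, to upgrade to $H^1$, I would write $u-u_i=(u-\ld_i Tu)+\ld_i T(u-u_i)$: the first term is $\le Cr$ in $H^1(\Om)$ by the first step, and since $T\,:\,L^2(\Om)\to H^1_0(\Om)$ is bounded (elliptic a priori estimate) the second term is bounded by $C\|u-u_i\|_{L^2(\Om)}\le Cr$, giving $\|u-u_i\|_{H^1(\Om)}\le Cr$.

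The one point requiring genuine care is the use of the simplicity of $\ld_i$: it is exactly what guarantees an $r$-independent spectral gap $\rho$ around $1/\ld_i$, so that the choice $r^*=\rho$ in Theorem~\ref{th:ConvEigen} isolates the correct one-dimensional eigenspace; without simplicity the abstract estimate would only control the projection of $\tilde u$ onto a cluster of eigenfunctions. Everything else — Lax--Milgram, Poincaré, Rellich compactness, and the bound $\|Tf\|_{H^1(\Om)}\le C\|f\|_{L^2(\Om)}$ — is standard, and the restriction to small $r$ is harmless since for $r$ bounded below one may take $u_i$ to be any normalized eigenfunction and bound $\|u\|_{H^1(\Om)}$ directly from \eqref{eq:varequ} and \eqref{eq:normur}.
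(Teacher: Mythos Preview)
Your proof is correct and uses the same abstract perturbation result (Theorem~\ref{th:ConvEigen}) as the paper, but you apply it in a different Hilbert space. The paper realizes the inverse Laplacian as a compact self-adjoint operator $A$ on $H^1_0(\Om)$ equipped with the Dirichlet inner product $(u,v)_{H^1_0}:=\int_\Om\nabla u\cdot\nabla v$, so that the abstract theorem directly yields an $H^1_0$-estimate $\|\tilde u-\tilde u_i\|_{H^1_0}\le Cr/\|u\|_{H^1_0}$; the remaining work is then a renormalization argument passing from $\|\tilde u_i\|_{H^1_0}=1$ to $\|u_i\|_{L^2}=1$, which costs a few lines of algebra. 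You instead apply the theorem in $L^2(\Om)$, which makes the normalization trivial (the unit eigenvector produced is already $L^2$-normalized), and then recover the $H^1$-bound through the neat identity $u-u_i=(u-\ld_i Tu)+\ld_i T(u-u_i)$ together with the $L^2\!\to\!H^1$ boundedness of $T$. Both routes hinge on simplicity of $\ld_i$ in exactly the same way (to fix a spectral gap $r^*$ independent of $r$), and both are of comparable length; your bootstrap step is perhaps slightly more transparent than the paper's renormalization, while the paper's choice avoids introducing a second estimate in a different norm.
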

\begin{proof}
The proof is essentially based on Theorem \ref{th:ConvEigen}. We define the operator $A \,: H^1_0(\Om) \rightarrow H^1_0(\Om)$ by
\[
(Au,v)_{H_0^1(\Om)} := (u,v)_{L^2(\Om)} \quad \forall (u,v)\in H^1_0(\Om)
\]
where for all  $(u,v)\in H^1_0(\Om)$,
\[
(u,v)_{H^1_0(\Om)} := \int_{\Om} \nabla u\cdot\nabla v\,dx
\]
and $(u,v)_{L^2(\Om)} $ is the usual $L^2$ scalar product. Obviously $A$ is a self-adjoint positive and compact operator on $H^1_0(\Om)$,  and hence it has a 
discrete spectrum $(1/\ld_i)_{i=1,\cdots,\infty}$ such that
\[
0<\ld_1\leq \cdots\leq\ld_i\leq \cdots \quad \underset{ i \longrightarrow \infty}{\longrightarrow \infty}.
\]
and their associated eigenfunctions $u_i \in H^1_0(\Om)$ satisfy
\[
-\Delta u_i = \ld_i u_i.
\]
By hypothesis, the function $\wt u:= u/\|u\|_{H^1_0(\Om)}$ satisfies
\[
\left\| A \wt u - \frac{1}{\ld_i} \wt u \right\|_{H^1_0(\Om)}\leq \frac{r}{\ld_i \|u\|_{H^1_0(\Om)}}.
\]
 Therefore, since $\ld_i$ is supposed to be simple, the second part of Theorem \ref{th:ConvEigen} ensures the existence of an eigenfunction  $\wt{u}_i \in H^1_0(\Om)$   
of $A$ associated with $1/\ld_i$ and normalized as $\|\wt{u}_i\|_{H^1_0(\Om)}=1$, such that
\begin{equation}
\label{eq:convtilde}
\|\wt{u} - \wt{u}_i\|_{H^1_0(\Om)} \leq C_1 \frac{r}{\|u\|_{H^1_0(\Om)}},
\end{equation}
for some constant $C_1>0$ that only depends on $\ld_i$ and on the distance between $1/\ld_i$ and the closest eigenvalue of $A$. To end the proof we must renormalize this 
last inequality. 

Let us introduce $u_i:= \wt u_i/\|\wt u_i\|_{L^2(\Om)}$, then  \eqref{eq:convtilde} gives
\begin{equation}
\label{eq:appconcl1}
\|u - u_i\|_{H^1_0(\Om)} \leq C_1 r + \left \|u_i - \|u\|_{H^1_0(\Om)} \wt u_i \right\|_{H^1_0(\Om)}.
\end{equation}
By using the definition of $u_i$, the second term in this expression becomes
\begin{equation}
\label{eq:appconcl2}
\left\|u_i - \|u\|_{H^1_0(\Om)} \wt u_i\right\|_{H^1_0(\Om)}\leq \left|\sqrt{\ld_i} - \|u\|_{H^1_0(\Om)}\right| \leq \left| \ld_i -\|u\|^2_{H^1_0(\Om)}\right| \frac{1}{\sqrt{\ld_i}} 
\end{equation}
but from \eqref{eq:varequ} and \eqref{eq:normur} we have
\begin{align}
\left | \ld_i-\|u\|^2_{H^1_0(\Om)} \right| & \leq  \left|\ld_i -  \ld_i\|u\|^2_{L^2(\Om)}\right| + r \|u\|_{H^1_0(\Om)} \nonumber \\
&\leq \ld_i \left| 1 - \|u\|_{L^2(\Om)}\right|  \left|1+ \|u\|_{L^2(\Om)} \right|+ r \|u\|_{H^1_0(\Om)} \nonumber \\
& \leq \ld_i r (2+r) + r \|u\|_{H^1_0(\Om)} \label{eq:appconcl3} .
\end{align}
The $H^1_0$-norm of $u$ can be controlled by using \eqref{eq:varequ} and  Poincarre's inequality:
\[
\|u\|_{H^1_0(\Om)} \leq r + \ld_i \sqrt{\ld_0} (1+r).
\] 
 This last inequality together with \eqref{eq:appconcl1}, \eqref{eq:appconcl2} and \eqref{eq:appconcl3} give the result for a constant $C$ that only depends on 
$\ld_0$ and $\ld_i$.
\end{proof}
\end{document}